\newtheorem{theorem}{Theorem}[section]
\newtheorem{corollary}[theorem]{Corollary}
\newtheorem{lemma}[theorem]{Lemma}
\newcommand{\R}{\mathbb{R}}
\newcommand{\C}{\mathcal{C}}
\newcommand{\N}{\mathbb{N}}
\newcommand{\M}{\Omega}
\newcommand{\F}{\am\times\km}
\newcommand{\p}{\partial}
\newcommand{\s}{\mathcal{S}}
\newcommand{\Rn}{\mathbb{R}^{n}}
\newcommand{\am}{\mathcal{A}(M_0)}
\newcommand{\km}{\mathcal{K}(M_1,M_2)}
\newcommand{\G}{\mathcal{L}}
\newcommand{\norm}[1]{\left\Vert#1\right\Vert}
\newcommand{\abs}[1]{\left|#1\right|}
\newcommand{\set}[1]{\left\{#1\right\}}
\newcommand{\para}[1]{\left(#1\right)}
\newcommand{\A}{\mathscr{A}}
\newcommand{\To}{\longrightarrow}
\newcommand{\rpm}{\raisebox{.2ex}{$\scriptstyle\pm$}}
\begin{document}
\title[ Linear Boltzmann Equation]{An inverse problem for the Linear Boltzmann Equation with time-dependent coefficient}


\author{Mourad Bellassoued}

\address{Universit\'e de Tunis El Manar, Ecole Nationale d'Ing\'enieurs de Tunis, ENIT-LAMSIN, B.P. 37, 1002 Tunis, Tunisia}

\email{mourad.bellassoued@enit.utm.tn}


\author{Yosra Boughanja}

\address{Universit\'e de Tunis El Manar, Ecole Nationale d'Ing\'enieurs de Tunis, ENIT-LAMSIN, B.P. 37, 1002 Tunis, Tunisia}

\email{yosra.boughanja@enit.utm.tn}

%
%
%
%

\keywords{Inverse problem,  Stability estimate; Linear Boltzmann Equation ; Albedo operator.}
\subjclass[2010]{Primary 35R30, Secondary: 35Q20} 
\date{\today}
\begin{abstract}
In this paper, we study the stability in the inverse problem of determining the time dependent absorption 
coefficient appearing in the linear Boltzmann equation, from boundary observations. We prove in dimension $n\geq 2$, that the absorption 
coefficient can be uniquely determined in a precise subset of the domain,
from the albedo operator. We derive  a logarithm type stability estimate in the determination of the absorption  coefficient from the albedo operator, in
a subset of our domain assuming that it is known outside this subset. Moreover, we prove that we can extend this result to the determination of the coefficient in a larger region, and then in the whole domain provided that we have much more data. We prove also an identification result for the scattering coefficient appearing in the linear Boltzmann equation.
\end{abstract}
\maketitle
\tableofcontents

\section{Introduction and main results}
This article is devoted to the  study of the problem of determining the absorption and scattering properties of a bounded, convex medium $\Omega\subset\Rn$, $n\geq 2$  from the observations made at the boundary.  We denote by $\s=\mathbb{S}^{n-1}$ the unit sphere of $\Rn$, $Q:=\s\times\Omega$ and for $T>0$, we denote $Q_T=(0,T)\times Q$. We consider the linear Boltzmann equation  
\begin{equation}
\p_t u+\theta\cdot\nabla u(t,x,\theta)+a(t,x)u=\G_k[u](t,x,\theta)  \quad  \mbox{in}\quad Q_T	 ,\label{11}
\end{equation}
where $a\in L^\infty((0,T)\times\Omega)$ and $\G_k$ is the integral operator with kernel $k(x,\theta,\theta')$ defined by
\begin{equation}\label{1.2}
\G_k[u](t,x,\theta)=\int_{\s} k(x,\theta,\theta') u(t,x,\theta')d\theta'.
\end{equation}
The function $u$ represent the density of particles at $x$ traveling in the direction $\theta$, $a(t,x)$ is the absorption coefficient at $x$ in the time $t$, and $k(x,\theta,\theta')$ is the scattering coefficient (or the collision kernel). Let $\Gamma^\pm$ denote the incoming and outgoing bundles
\begin{equation}
\Gamma^{\rpm}=\set{(x,\theta)\in\Gamma\,:\, \rpm\theta\cdot\nu(x)>0},\quad \Sigma^{\rpm}=[0,T]\times\Gamma^{\rpm},\quad  \Gamma=\p\Omega\times\s,
\end{equation}
where $\nu=\nu(x)$ denotes the unit outward normal to $\p\M$ at $x$. The medium is probed with the given radiation
\begin{equation}
u_{|\Sigma^-}=f^-.
\end{equation}
The exiting radiation $u_{|\Sigma^+}$ is detected thus defining the albedo operator $\A_{a,k}$ that takes the incoming flux $f^-$ to the outgoing flux $u_{|\Sigma^+}$
\begin{equation}\label{1.55}
\A_{a,k}[f^-]=u_{|\Sigma^+}.
\end{equation}
In this paper, we will study the uniqueness and the stability issues in the inverse problem of determining the time-dependent absorption coefficient  $a$  and the scattering coefficient $k$ from the albedo operator $\A_{a,k}$. We consider three different sets of data and we aim to prove that the absorption coefficient $a$ can be recovered in some specific subdomain of $(0,T)\times\Omega$, by probing it with disturbances generated on the boundary and we obtain also an  identification result for the scattering coefficient $k$.
\smallskip

For general relevant references on theoretical  inverse problems, we refer the reader to \cite{[21], 
 [23], [31], [33], [37]}. For general references and earlier review papers on inverse transport, we refer the reader to e.g. \cite{[1], [27], [34]}.
Inverse transport theory has many applications,  in medical imaging are optical tomography \cite{[2], [31]} and optical molecular imaging \cite{[14]}. Applications in remote sensing in the atmosphere are considered in \cite{[26]}. Inverse transport can also be used efficiently for imaging using high frequency waves propagating in highly heterogeneous media; see e.g. \cite{[5], [6]}.
\smallskip

Most of the paper will be concerned with answering the questions of what may be
reconstructed in $a$ and $k$ from  knowledge of the albedo operator $\A_{a,k}$ and with which stability estimate. This is the inverse transport problem.
\smallskip

The problem of identifying coefficients appearing in the linear Boltzmann equation was treated very well and there are  many works related to this topic. The uniqueness of the reconstruction of the optical parameters from the albedo operator for the stationary Boltzmann equation, both in the case that  the the time-independent absorption coefficient $a$ depend only on position $x$ and the case that $a$ depend also on the direction  $\theta$, was proved by Choulli and Stefanov in \cite{[155]} and Tamasan \cite{[27+]}. As for the stability results was obtained in the stationary Boltzmann equation   
in two or three dimensional case  under smallness assumptions for the absorption and the scattering coefficients  by Romanov \cite{[32], [33]} and  in two-dimensional case under smallness assumptions for the scattering parameter by Stefanov and Uhlmann \cite{[37]}. In three or higher dimensions, stability of the reconstruction results of general  scattering and absorption
coefficients  were established  by Bal and  Jollivet \cite{[4]}. In \cite{[36]} it is shown that the albedo operator determines the pairs of coefficients up to a gauge transformation, and a stability estimate for gauge equivalent classes was proved for the stationary transport equation in \cite{[19]}.
\smallskip

In the case of the dynamic transport equation or linear Boltzmann equation \eqref{11} with time independent coefficients, unique recovery of the pair $(a,k)$  from knowledge of the albedo operator  \eqref{1.55} was shown by Choulli and Stefanov in \cite{[15]}, and stable recovery of the absorption coefficient $a$  was proved by Cipolatti, Motta and Roberty in \cite{[17]} and Cipolatti \cite{[16]}.  This latter result is extended to stable determination of the time-independent pair $(a,k)$ from the albedo map by Bal and Jollivet in \cite{[40]}. However, in this article, we deal with the case where $a$ depend on the spacial variable $x$ and the time $t.$
\smallskip

However, all the mentioned above papers are concerned only with time-independent coefficients. Inspired by the work of  Bellassoued and Ben Aïcha \cite{[8]} and Ben Aïcha \cite{[9]} for the the dissipative wave equation, we prove in this paper stability estimates in the recovery of the  time-dependent absorption  coefficient $a$ appearing in the dynamical Boltzmann equation via different types of measurements and over different subdomain of $  (0,T)\times\Omega $.
\smallskip

The Linear Boltzmann  equation is one of the mathematical models to support the diffuse optical tomography,  here the inverse problem can be set up associated with a map from the input data on one portion of the boundary to the measurement on the other portion, this map is the albedo operator. Depending on the data-acquisition method in the experiments, the measurement can take various forms.
\medskip

Literature dealing with the inverse problem of recovering time-dependent potentials of the transport equation is rather sparse.  In the current work, we show that it is possible to logarithm-stably recover the time-and-space-dependent coefficients of the linear Boltzmann equation (1.1) in a suitable region dependent on the disturbance used to probe the medium. 
\medskip

From a physical view point, our inverse problem consists in determining the time dependent absorption coefficient and the scattering coefficient $k$ in the linear Boltzmann equation (1.1) by probing it with disturbances generated on the boundary and the initial data. 
\medskip

The unique determination of the time-dependent coefficient $a$ in the whole domain $(0,T)\times\Omega$ is fails (see Theorem 1.3) if we assume  that the medium is quiet initially ($u_0=0$) and our data are only the response $\mathscr{A}_{a,k}(f)$ where $f$ denotes the disturbance used to probe the medium. 
\medskip

Further, we proved a log-type stability estimate for the time-dependent absorption coefficient $a$ with respect to the albedo operator  in a subset of $(0,T)\times\Omega$. By assuming that we know $u (T,\cdot,\cdot)$,  with $u_0= 0$, we  extended this result to a larger region and finally, we proved a log-type stability estimate for this problem over the whole domain $(0,T)\times\Omega$ when our data is the response of the medium for all possible initial states.
\subsection{Notations and well-posedness of the linear Boltzmann equation}
We start by examining the well-posedness of the following initial-boundary value problem for the linear Boltzmann equation (\ref{11})
\medskip
\begin{equation}
\left\{
  \begin{array}{ll}
\partial_tu+\theta\cdot\nabla u(t,x,\theta)+a(t,x)u=\G_k[u](t,x,\theta)  \quad & \mbox{in}\quad Q_T:=(0,T)\times Q ,\\      
 u(0,x,\theta)=u_0(x,\theta)& \mbox{in}\quad Q:=\s\times\Omega ,\\
u(t,x,\theta)=f(t,x,\theta) & \mbox{on}\quad \Sigma^-.\\
\end{array}
\right.\label{1}
\end{equation}
In order to well define the albedo operator as a trace operator on the outcoming
boundary, we consider the spaces $\mathscr{L}^{\pm}_p(\Sigma^{\pm}):=L^p(0,T;L^p(\Gamma^{\pm},d\xi))$, where $d\xi=\vert\theta\cdot \nu(x)\vert dx d\theta$, equipped with the norm
$$\Vert f\Vert_{\mathscr{L}_p^{\pm}(\Sigma^\pm)}=\Vert f\Vert_{L^p(0,T;L^p(\Gamma^{\pm},d\xi))}.$$
We introduce the spaces
$$ W_p:=\lbrace u \in  L^p(Q),  \;\theta\cdot \nabla u \in L^p(Q);\;\int_\Gamma \vert \theta\cdot\nu(x)\vert u(x,\theta) \vert^p d\theta dx < \infty  \rbrace.$$
We also consider the operator $A : D(A) \rightarrow L^p(Q)$, defined by $(Au)(x,\theta) = \theta\cdot\nabla u( x,\theta)$,
with $$D(A)=\lbrace u\in W_p;\;u_{|\Gamma^+}=0 \rbrace.$$
Let $\Omega_T=(0,T)\times\Omega$. Given $M_0, M_1, M_2>0$, we consider the set of admissible coefficients $a$ and $k$
\begin{eqnarray*}
\mathcal{A}(M_0)=\lbrace a\in \C^1(\overline{\Omega}_T);\,\Vert a\Vert_{L^\infty(\Omega_T)}\leq M_0\rbrace .
\end{eqnarray*}
\begin{eqnarray*}
\mathcal{K}(M_1,M_2)=\lbrace k\in L^\infty(\Omega,L^2(\s\times\s));\, \int_\s\vert k(x,\theta,\theta')\vert d\theta'\leq M_1,\, \int_\s\vert k(x,\theta,\theta')\vert d\theta\leq M_2\rbrace .
\end{eqnarray*}
Before stating the main results, we recall the following Lemma on the unique existence of a weak solution to problem  \eqref{1}. The proof is based on \cite{[18]} which was completed in Appendix A.
\begin{lemma}\label{Th0}
Let $T>0$, $p\geq 1$ are given, $(a,k)\in \am\times\km$ and $u_0\in L^\infty(Q)$. For $f\in {\mathscr{L}_p^{-}(\Sigma^-)}$ the unique solution $u$ of (\ref{1}) satisfies
\begin{equation*}
u\in\mathcal{C}^1([0,T];L^p(Q))\cap\mathcal{C}([0,T];W_p).
\end{equation*}
Moreover, there is a constant $C>0$ such that
\begin{equation}
\norm{u(t,\cdot,\cdot)}_{L^p(Q)}+\Vert u \Vert _{\mathscr{L}^+_p(\Sigma^+)}\leq C(\norm{u_0}_{L^p(Q)}+ \Vert f \Vert _{\mathscr{L}^-_p(\Sigma^-)}),\quad \forall\,t\in (0,T).\label{aa}
\end{equation}
\end{lemma}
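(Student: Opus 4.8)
The plan is to prove Lemma \ref{Th0} by combining the classical semigroup theory for the free transport operator with a fixed-point (or Duhamel series) argument that absorbs the zeroth-order terms $a$ and $\G_k$. First I would rewrite \eqref{1} by splitting off the boundary data: writing $u = v + w$, where $w$ is a suitable lift of $f$ into $Q_T$ satisfying $w_{|\Sigma^-}=f$ (for instance $w(t,x,\theta)=f(t-\tau_-(x,\theta),x-\tau_-(x,\theta)\theta,\theta)$ obtained by transporting the boundary datum along characteristics, where $\tau_-$ is the backward exit time), and $v$ solves the same equation with homogeneous incoming data $v_{|\Sigma^-}=0$, modified initial datum $v(0,\cdot,\cdot)=u_0-w(0,\cdot,\cdot)$, and a forcing term coming from $w$. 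This reduces everything to the study of the operator $A$ with domain $D(A)$ defined in the excerpt.

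Next I would invoke the fact that $-A$ generates a strongly continuous contraction semigroup $(e^{-tA})_{t\geq 0}$ on $L^p(Q)$ — this is the standard transport semigroup, whose action is translation along characteristics with absorption at the boundary — and that the operators $u\mapsto a(t,\cdot)u$ and $u\mapsto \G_k[u]$ are bounded on $L^p(Q)$, uniformly in $t$, with norms controlled by $M_0$ and by $(M_1,M_2)$ respectively (the bound on $\G_k$ on $L^p$ follows from the two integral conditions in $\km$ via Schur's test / interpolation between the $L^1$ and $L^\infty$ estimates). Then the solution $v$ is the unique fixed point of the Duhamel map
\begin{equation*}
v(t)=e^{-tA}v(0)+\int_0^t e^{-(t-s)A}\big(\G_k[v(s)]-a(s)v(s)+F(s)\big)\,\dd s,
\end{equation*}
with $F$ the forcing generated by $w$; existence and uniqueness in $\mathcal{C}([0,T];L^p(Q))$ follow from a contraction-mapping argument on short time intervals together with the linear a priori bound, and then globalize on $[0,T]$. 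The regularity $v\in \mathcal{C}^1([0,T];L^p(Q))\cap\mathcal{C}([0,T];W_p)$ is read off from the Duhamel formula once one knows $v(0)\in D(A)$ and the forcing is continuous in $t$ with values in $L^p(Q)$, using that $\theta\cdot\nabla v = -Av$ plus bounded terms.

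Finally, for the quantitative estimate \eqref{aa}, I would test the equation for $v$ against $p\,|v|^{p-2}v$ (or use the dissipativity of $-A$ directly) to obtain an energy-type inequality
\begin{equation*}
\frac{\dd}{\dd t}\norm{v(t)}_{L^p(Q)}^p + \text{(boundary outflow term)} \leq C(M_0,M_1,M_2)\,\norm{v(t)}_{L^p(Q)}^p + C\norm{F(t)}_{L^p(Q)}^p,
\end{equation*}
then apply Grönwall's inequality to control $\norm{v(t)}_{L^p(Q)}$ and, integrating the boundary outflow term in $t$, control $\norm{v}_{\mathscr{L}^+_p(\Sigma^+)}$; tracking the lift $w$ back in terms of $\norm{f}_{\mathscr{L}^-_p(\Sigma^-)}$ and $u_0$ in terms of $\norm{u_0}_{L^p(Q)}$ gives \eqref{aa} with $C$ depending only on $T$, $p$, $M_0$, $M_1$, $M_2$. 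The main technical obstacle I anticipate is handling the traces rigorously: one must justify that functions in $W_p$ have well-defined traces in $\mathscr{L}^\pm_p(\Sigma^\pm)$ and that the Green/divergence identity used in the energy estimate is valid for merely weak solutions — this is where the reference \cite{[18]} and the density arguments in Appendix A do the real work. Since the excerpt says the proof is based on \cite{[18]} and completed in Appendix A, I would treat these trace and density facts as available and focus the write-up on the semigroup reduction and the Grönwall estimate.
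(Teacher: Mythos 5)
Your proposal is correct and, for the quantitative estimate \eqref{aa} --- which is the only part the paper actually proves in detail (existence, uniqueness and the stated regularity are cited from Dautray--Lions \cite{[18]}) --- it follows essentially the same route as the paper's Appendix A: multiply the equation by $\vert u\vert^{p-2}\overline{u}$, apply the divergence (Gauss) identity, bound $\G_k$ on $L^p(Q)$ via H\"older/Schur using the constants $M_1,M_2$, conclude by Gr\"onwall, and then integrate in time to recover the outgoing trace norm. The only real difference is that the paper does not homogenize the boundary data with a characteristic lift $w$; it keeps the inhomogeneous incoming condition and exploits the sign of $\theta\cdot\nu$ on $\Gamma^-$ to move the incoming boundary term directly to the right-hand side of the energy identity, which avoids the (minor) technical issues your lift raises about defining $\tau_-$ near $t=0$ and about the regularity of $v(0)=u_0-w(0)$, but is not a substantively different argument.
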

We consider now the following initial boundary value problem for the linear Boltzmann equation  with source term $v\in L^p(Q_T)$:
\begin{equation}\label{1.4}
\left\{
\begin{array}{llll}
\partial_t\psi+\theta\cdot\nabla\psi+a(t,x)\psi=\G_k[\psi]+v (t,x,\theta) & \textrm{in }\,\, Q_T,\cr
\psi(0,x,\theta)=0& \textrm{in}\,\,Q,\cr
\psi(t,x,\theta)=0 & \textrm{on} \,\, \Sigma^-.
\end{array}
\right.
\end{equation}
\begin{lemma}\label{L.1.1}
Let $T>0$, $(a,k)\in\am\times\km$. Assuming $v\in L^p(Q_T)$, then there exists a unique solution
$\psi$ to \eqref{1.4} such that
 $$
 \psi\in \mathcal{C}^1([0,T];L^p(Q))\cap \mathcal{C}([0,T];D(A)).
 $$ 
Furthermore, there is a constant $C>0$ such that
\begin{equation}\label{1.5}
\norm{\psi(t,\cdot,\cdot)}_{L^p(Q)}\leq C\norm{v}_{L^p(Q_T)},\quad \forall\,t\in (0,T).
\end{equation}
\end{lemma}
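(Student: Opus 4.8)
The plan is to reduce the source problem \eqref{1.4} to the homogeneous initial--boundary value problem \eqref{1} and to invoke Lemma \ref{Th0} via Duhamel's principle. For $0\le s\le t\le T$, let $U(t,s)$ denote the solution operator of the homogeneous problem: given $g\in L^p(Q)$, $U(t,s)g$ is the value at time $t$ of the solution $u$ of $\partial_t u+\theta\cdot\nabla u+a u=\G_k[u]$ on $(s,T)\times Q$ with $u|_{(s,T)\times\Gamma^-}=0$ and $u(s,\cdot,\cdot)=g$. Applying Lemma \ref{Th0} on the interval $(s,T)$ in place of $(0,T)$ (which only shifts the construction in time) shows that each $U(t,s)$ is a bounded operator on $L^p(Q)$ and that $\{U(t,s)\}_{0\le s\le t\le T}$ is a strongly continuous evolution family. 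One then checks that
$$\psi(t,\cdot,\cdot)=\int_0^t U(t,s)\,v(s,\cdot,\cdot)\,\mathrm{d}s$$
solves \eqref{1.4}: differentiation in $t$ reproduces the transport term together with $v(t,\cdot,\cdot)$, and the vanishing initial data and vanishing incoming trace of each $U(t,s)g$ pass to $\psi$.

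For the estimate \eqref{1.5} I would re-run the energy/Gronwall argument underlying \eqref{aa} with initial time $s$ instead of $0$: since $\norm{a}_{L^\infty(\Omega_T)}\le M_0$ and, by a Schur-type bound using $\int_\s|k(x,\theta,\theta')|\,\mathrm{d}\theta'\le M_1$ and $\int_\s|k(x,\theta,\theta')|\,\mathrm{d}\theta\le M_2$ together with Riesz--Thorin interpolation, $\G_k$ is bounded on $L^p(Q)$ with a norm depending only on $M_1,M_2,p$, the bound $\norm{U(t,s)g}_{L^p(Q)}\le C\norm{g}_{L^p(Q)}$ holds with $C$ uniform in $0\le s\le t\le T$. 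Hence, using this bound and Hölder's inequality in $s$,
$$\norm{\psi(t,\cdot,\cdot)}_{L^p(Q)}\le C\int_0^t\norm{v(s,\cdot,\cdot)}_{L^p(Q)}\,\mathrm{d}s\le C\,T^{1-1/p}\,\norm{v}_{L^p(Q_T)},$$
which is \eqref{1.5}. Uniqueness is then immediate: the difference of two solutions solves \eqref{1.4} with $v\equiv 0$, and \eqref{1.5} forces it to vanish.

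The regularity claim $\psi\in\mathcal{C}^1([0,T];L^p(Q))\cap\mathcal{C}([0,T];D(A))$ is where the actual work lies. For $v$ in a dense subclass (say $v\in\mathcal{C}^1([0,T];L^p(Q))$, or $v(t,\cdot,\cdot)\in D(A)$), the Duhamel formula, together with the fact that $a\in\mathcal{C}^1(\overline{\Omega}_T)$ makes $t\mapsto a(t,\cdot)$ a $\mathcal{C}^1$ map into $L^\infty$ and hence $t\mapsto\bigl(\phi\mapsto -a(t,\cdot)\phi+\G_k[\phi]\bigr)$ a $\mathcal{C}^1$ family of bounded perturbations of $-A$, places one exactly in the setting of classical solutions for non-autonomous bounded perturbations of a $\mathcal{C}_0$-semigroup, so $\psi$ has the stated regularity; the general $v\in L^p(Q_T)$ is then handled by approximation together with \eqref{1.5}. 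I expect two main obstacles: (i) extracting the uniform boundedness of $\G_k$ on $L^p(Q)$ and the uniform-in-$(s,t)$ control of $U(t,s)$ cleanly from the construction behind Lemma \ref{Th0}; and (ii) justifying that $\psi$ genuinely attains the zero trace on $\Sigma^-$ and lies in $\mathcal{C}([0,T];D(A))$ — since the free-transport semigroup has no smoothing effect, this cannot be bootstrapped and must be read off from the structure of the Duhamel formula (i.e. integration along characteristics) rather than from any parabolic-type regularization. An equivalent and arguably more transparent route is to write $\psi$ directly by integrating \eqref{1.4} along the characteristics $s\mapsto(s,x-(t-s)\theta)$, which automatically encodes both the vanishing initial data and the vanishing incoming-boundary data, and to solve the resulting integral equation by a contraction argument in $\mathcal{C}([0,T];L^p(Q))$, the contraction constant being controlled by $e^{M_0 T}\norm{\G_k}_{\mathcal{L}(L^p(Q))}$.
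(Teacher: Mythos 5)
Your argument is correct in substance, but it is organized differently from what the paper actually does. The paper's proof of \eqref{1.5} is the direct $L^p$ energy estimate of Lemma \ref{A1} in Appendix A: one multiplies the inhomogeneous equation by $\vert u\vert^{p-2}\overline{u}$, integrates over $Q$, applies the divergence theorem so that the boundary terms on $\Gamma^{\pm}$ appear with the right signs, bounds $\G_k$ on $L^p(Q)$ by H\"older using the two Schur bounds $M_1,M_2$, absorbs the source by Young's inequality, and closes with Gr\"onwall; specializing to $u_0=0$, $f=0$ gives \eqref{1.5} at once. Your Duhamel route reaches the same estimate, but only after you have established the uniform bound $\norm{U(t,s)g}_{L^p(Q)}\leq C\norm{g}_{L^p(Q)}$, which is the very same Gr\"onwall computation run on the homogeneous problem with shifted initial time; so the detour through the evolution family buys nothing here and costs you the verification that $\{U(t,s)\}$ is a strongly continuous two-parameter family (the problem is non-autonomous since $a$ depends on $t$). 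Your alternative of integrating along characteristics and contracting in $\mathcal{C}([0,T];L^p(Q))$ is also sound and is closer in spirit to the construction in Dautray--Lions that the paper cites.

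One step of your proposal does not close as written: the claim $\psi\in\mathcal{C}^1([0,T];L^p(Q))\cap\mathcal{C}([0,T];D(A))$ for general $v\in L^p(Q_T)$ cannot be obtained by approximating $v$ by regular sources ``together with \eqref{1.5}'', because \eqref{1.5} only controls the $\mathcal{C}([0,T];L^p(Q))$ norm of the difference of approximants; convergence in $\mathcal{C}^1([0,T];L^p(Q))$ and in $\mathcal{C}([0,T];D(A))$ would require estimates on $\partial_t\psi$ and $A\psi$ in terms of $\norm{v}_{L^p(Q_T)}$, which the transport semigroup, having no smoothing effect, does not provide for a source that is merely $L^p$ in time. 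To be fair, the paper does not prove this regularity either (it is imported from the cited reference, and in every application the source $\G_k[\varphi^{\pm}_\lambda b_a]$ is continuous in $t$, so the classical-solution theory for continuously $t$-dependent bounded perturbations applies directly); but you should either restrict the regularity claim to such sources or state explicitly that for general $v\in L^p(Q_T)$ one only obtains a mild solution satisfying \eqref{1.5}.
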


\subsection{Main results}
Before stating our main results we give the following notations.
Let  $r>0$ be such that $T>2r.$ Without loss of generality, we assume that   $\Omega \subseteq B(0, r/2) =\left\lbrace  x \in \Rn, \vert x\vert <r/2\right\rbrace$. We set $X_r=(0,T)\times B(0, r/2) $ 
and we consider the annular region around the domain $\Omega$,
$$\mathcal{B}_r=\left\lbrace x\in \Rn, \; \frac{r}{2}<\vert x \vert < T-\frac{r}{2}\right\rbrace.   $$
 Setting the {\it{forward}} and {\it{backward}} light cone:
\begin{equation}\label{c+}
\mathcal{C}_r^+=\left\lbrace  (t,x)\in X_r,\; \vert x\vert <t-\frac{r}{2},\;t>\frac{r}{2}\right\rbrace,
\end{equation}
\begin{equation}\label{c-}
\mathcal{C}_r^-=\left\lbrace  (t,x)\in X_r,\; \vert x\vert <T-\frac{r}{2}-t,\;T-\frac{r}{2}>t\right\rbrace.
\end{equation}
Let the {\it{cloaking}} region
\begin{equation}\label{cr}
\mathcal{C}_r=\left\lbrace (t,x)\in X_r, \; \vert x\vert \leq\frac{r}{2}-t,\;0\leq t\leq\frac{r}{2} \right\rbrace .
\end{equation}
Finally, we denote 
$$X_{r,\sharp}=\Omega_T\cap \mathcal{C}_r^+,\quad \quad\text{and}\quad X_{r,*}= \Omega_T\cap \mathcal{C}_r^+\cap\mathcal{C}_r^-.$$
It is clear that, $X_{r,*}\subset X_{r,\sharp}\subset\Omega_T .$ 
\begin{figure}[h]
  \begin{center}
\includegraphics[scale=0.52]{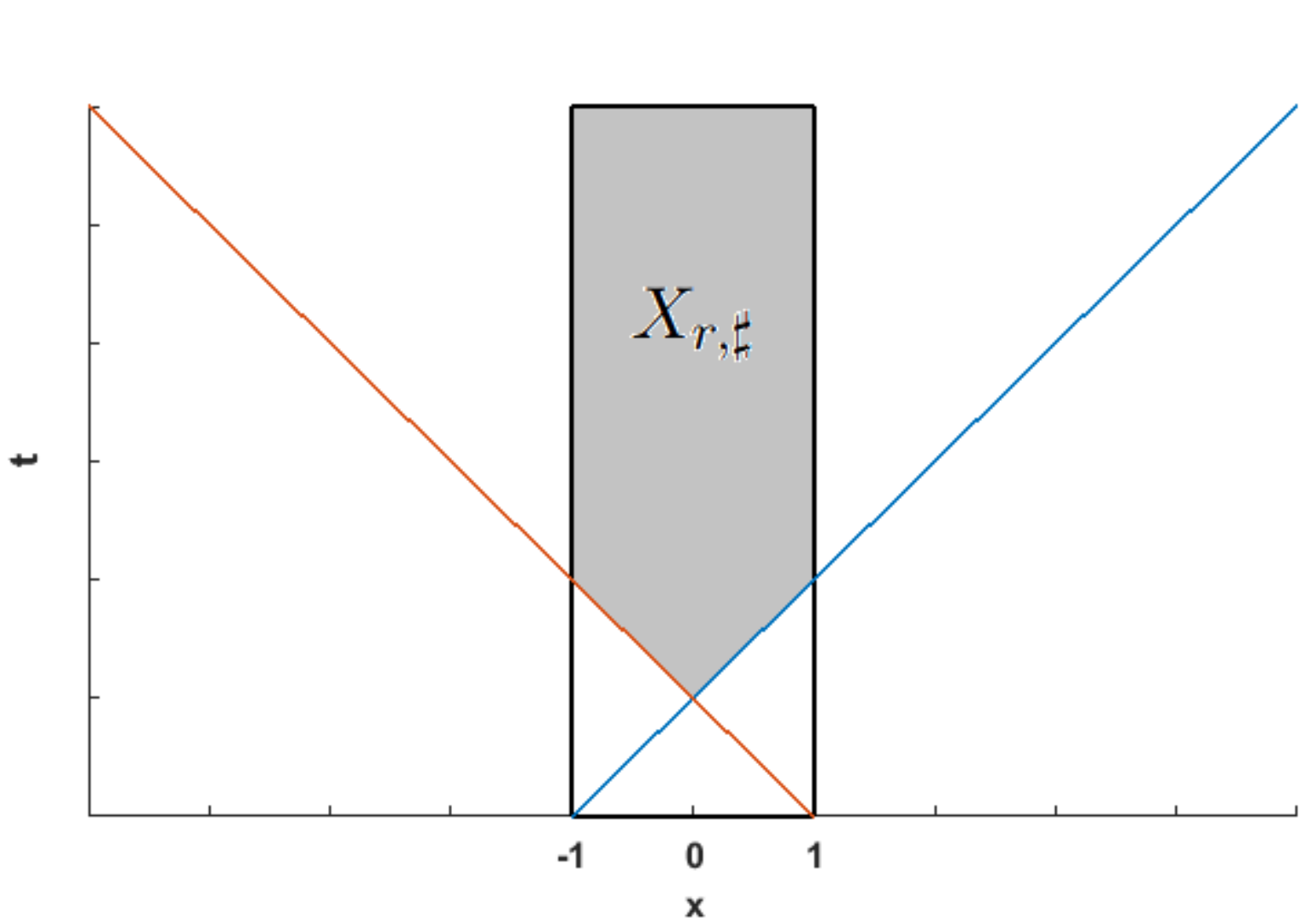} \;
\includegraphics[scale=0.47]{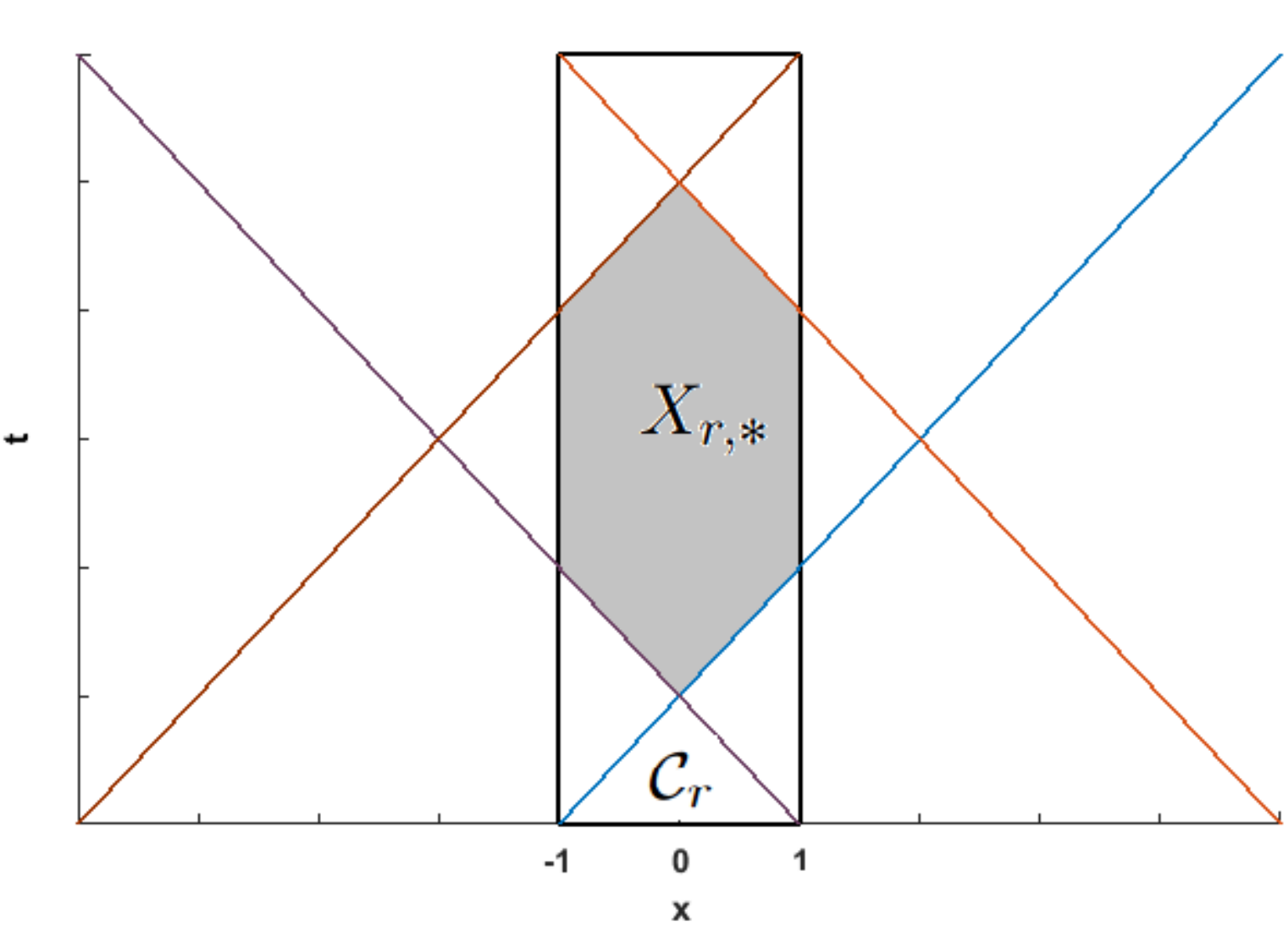} 
\caption{Particular case $\Omega=[-1,1]$}
\end{center}
\end{figure} 
\smallskip

Firstly, assuming that the initial condition $u_0=0$ and our known data will be given only by boundary measurements. That will lead us to consider the albedo operator $\A_{a,k}$, that maps the incoming flux on the boundary $\Sigma^-$ into the outgoing one, defined as follows
$$\begin{array}{llll}
\A_{a,k}:&{\mathscr{L}^-_p(\Sigma^-)}&\longrightarrow  &{\mathscr{L}^+_p(\Sigma^+)}\\
\ &\ \ \ \ f&\longmapsto &\A_{a,k}[f]= u_{|\Sigma^+}.
\end{array}$$ 
From \eqref{aa} the albedo operator map $\A_{a,k}$ is continuous from  ${\mathscr{L}^-_p(\Sigma^-)}$ to ${\mathscr{L}^+_p(\Sigma^+)}$. We denote by $\Vert\A_{a,k}\Vert_p$  its norm.  We can prove that it is hopeless to uniquely determine the absorption coefficient $a$ in the case where this coefficient is supported in the {\it{cloaking}} region $\mathcal{C}_r$.
\smallskip\\
The first result of this paper can be stated as follows:
\begin{theorem} \label{TT1}(Non-uniqueness)
For any $(a,k) \in \mathcal{A}(M_0)\times\km$, such that $\mbox{Supp}(a)\subset \mathcal{C}_r$, we have
$\A_{a,k}=\A_{0,k}.$
\end{theorem}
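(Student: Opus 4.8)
The plan is to show that if $\mathrm{Supp}(a)\subset\mathcal{C}_r$, then the solution $u$ of \eqref{1} with $u_0=0$ and incoming data $f$ never ``sees'' the coefficient $a$ on the outgoing boundary, so that $u$ coincides with the solution $\tilde u$ of the same problem with $a$ replaced by $0$. Since $\A_{a,k}[f]=u_{|\Sigma^+}$ and $\A_{0,k}[f]=\tilde u_{|\Sigma^+}$, this gives $\A_{a,k}=\A_{0,k}$. The key geometric observation is that $\mathcal{C}_r$ is a \emph{backward} cone emanating from the origin at $t=0$: a point $(t,x)\in\mathcal{C}_r$ satisfies $|x|\le \tfrac r2 - t$, so in particular $t\le r/2$ and $|x|\le r/2$. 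The characteristics of the transport operator $\partial_t+\theta\cdot\nabla$ are the lines $s\mapsto(t+s,x+s\theta)$, travelling at unit speed. Because $u_0=0$ and data enter only from $\Sigma^-$, the value $u(t,x,\theta)$ is determined by integrating along the backward characteristic until it hits either $t=0$ (contributing $u_0=0$) or $\Gamma^-$ (contributing $f$), picking up contributions from $a$ and from $\G_k[u]$ along the way.

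First I would write the mild (Duhamel) formulation of \eqref{1}: along the characteristic through $(t,x,\theta)$, letting $\tau^-(t,x,\theta)\ge 0$ be the backward exit time (either to $t=0$ or to $\partial\Omega$), one has
\begin{equation*}
u(t,x,\theta)=\mathcal{E}(t,x,\theta)\,h(t,x,\theta)+\int_0^{\tau^-(t,x,\theta)}\mathcal{E}_s(t,x,\theta)\,\G_k[u]\bigl(t-s,x-s\theta,\theta\bigr)\,\mathrm{d}s,
\end{equation*}
where $h$ is the boundary/initial datum picked up at the exit point, and $\mathcal{E},\mathcal{E}_s$ are the exponential attenuation factors involving $\int a$ along the characteristic. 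Iterating this identity (Neumann series in $\G_k$, which converges by Lemma \ref{Th0}), $u$ is expressed as a sum of multiple integrals, each of which is, schematically, an integral of $f$ (or $u_0=0$) against products of kernels $k$ and attenuation factors $\exp\bigl(-\int a\bigr)$ evaluated along broken characteristic paths. The claim reduces to showing that for $(t,x)\in\Sigma^+$, i.e. $(x,\theta)\in\Gamma^+$ so that the \emph{forward} characteristic immediately exits, every such path stays outside $\mathcal{C}_r$, so every factor $\exp\bigl(-\int a\bigr)$ equals $1$ and the series collapses to exactly the series for $\tilde u$.

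The heart of the argument is the geometric lemma: \emph{no broken characteristic (a concatenation of unit-speed straight segments, each run forward in time) that starts on $t=0$ or on $\Gamma^-$ and reaches a point of $\Gamma^+$ ever enters $\mathcal{C}_r$.} I would prove this by a light-cone estimate. If a segment of such a path passes through a point $(t_0,x_0)\in\mathcal{C}_r$, then $|x_0|+t_0\le r/2$. Running the path \emph{backward} from $(t_0,x_0)$ at unit speed to its origin at time $t=0$ (it cannot hit $\partial\Omega$ earlier, since $|x_0|<r/2$ forces the backward segment to stay in $B(0,r/2)$ for a time $\ge$ its distance to the boundary, which exceeds $t_0$): the backward path reaches $t=0$ at a point of norm at most $|x_0|+t_0\le r/2$, hence strictly inside $\Omega\subset B(0,r/2)$; so the path originates at an interior point at $t=0$, contradicting that it started from $u_0=0$ on $\{t=0\}$ — wait, more precisely: the Duhamel expansion only involves characteristics originating where data is prescribed, and $u_0\equiv 0$, so those terms vanish identically; the only surviving terms originate on $\Gamma^-$. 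Running forward from $\Gamma^-$, a path reaching $(t_0,x_0)\in\mathcal{C}_r$ would have travelled time $t_0$ from the boundary $|x|=r/2$ (or wherever it entered, but entry is on $\partial\Omega\subset\{|x|\le r/2\}$) to a point with $|x_0|\le r/2-t_0$; but each unit-speed segment satisfies $|x(s)|\ge |x(0)|-s$, and summing over the broken segments gives $|x_0|\ge (r/2) - t_0$ only with equality when the path moves radially inward the whole time — and then it exits the region $|x|\le r/2-t$ at exactly $t=t_0$ at the cone tip, never reaching $\Gamma^+$ afterwards without leaving $\mathcal{C}_r$; any reflection-free continuation to $\Gamma^+$ forces re-crossing, handled by the same inequality. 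Thus along every surviving path the integral $\int a$ vanishes, each attenuation factor is $1$, and term by term the Neumann series for $u$ equals that for $\tilde u$; restricting to $\Sigma^+$ yields $\A_{a,k}[f]=\A_{0,k}[f]$ for all $f$.

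The main obstacle I anticipate is the bookkeeping of the broken-characteristic geometry inside the Neumann/Duhamel expansion — making precise that a multiply-scattered path that starts on $\Gamma^-$ and ends on $\Gamma^+$ cannot dip into the cloaking cone $\mathcal{C}_r$ — and organizing the argument so that the cancellation is clean at the level of the integral series rather than pointwise on $u$ (which need not itself vanish inside $\mathcal{C}_r$; rather, the point is that the \emph{contribution of $a$} to the outgoing trace vanishes). One clean way to sidestep the combinatorics is to instead argue directly on the PDE: show that $w:=u-\tilde u$ solves a transport equation with source supported, by the above geometry, only in a region whose forward characteristics never meet $\Sigma^+$, hence $w_{|\Sigma^+}=0$ by the representation of Lemma \ref{L.1.1} (finite speed of propagation), which is the statement sought.
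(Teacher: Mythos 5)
Your route is genuinely different from the paper's, and its geometric core is sound, but as written it contains one substantive misstatement and leaves its heaviest step unresolved. The paper does not expand $u$ in a collision series at all: it first proves (Lemma \ref{12344}) that the solution itself vanishes identically on $\mathcal{C}_r$, by an energy estimate on the truncated backward light cone $V=\bigcup_{0\le\tau\le t'}\bigl(\mathcal{C}_r\cap\{t=\tau\}\bigr)$ --- multiply the equation by $u$, apply the divergence theorem, observe that the lateral boundary contribution $\int|u|^2(\eta+\theta\cdot\mu)\,d\sigma$ is nonnegative because $\eta=|\mu|$ and $|\theta|=1$, and conclude with Gr\"onwall. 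Since $\mathrm{Supp}(a)\subset\mathcal{C}_r$, the term $au$ is then identically zero, so $u$ also solves the problem with $a$ replaced by $0$ and the outgoing traces coincide. This is exactly the statement you dismiss in your parenthetical ``$u$ need not itself vanish inside $\mathcal{C}_r$'': under the geometry that makes the theorem true, every backward broken characteristic issued from a point of the open cone stays inside the cone until it reaches $\{t=0\}$, where $u_0=0$, so the pointwise vanishing does hold --- it is the dual reading of your own inequality $|x(t)|\ge r/2-t$ along forward broken paths from $\Gamma^-$, and it is precisely what makes the proof short.

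Two concrete gaps remain in your version. First, the ``clean sidestep'' at the end does not work as stated: $w=u-\tilde u$ solves a transport problem with source $-a\,u$ supported in $\mathcal{C}_r$, and forward characteristics (a fortiori scattered ones) emanating from interior points of $\mathcal{C}_r$ certainly do reach $\Sigma^+$; the source is harmless only because it is identically zero, which again requires the pointwise vanishing of $u$ on $\mathrm{Supp}(a)$ that you disclaimed. Second, your main route requires setting up and justifying the full collision (Neumann) expansion and checking, segment by segment, that every attenuation factor $\exp\bigl(-\int a\bigr)$ equals $1$; you flag this bookkeeping as an anticipated obstacle rather than carrying it out, and the degenerate cases (paths entering at $t=0$ and running along the lateral cone surface, and entry points of $\partial\Omega$ not lying on the sphere $|x|=r/2$, where $|x_0|\ge r/2-t_0$ fails) are exactly where your inequality loses its bite. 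The energy-method argument of the paper avoids all of this.
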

Let us now introduce the admissible set of the absorption coefficient $a$ in $ X_{r,*}$. Given $a_0 \in \C^1(\overline{\Omega}_T)$, we set
\begin{eqnarray*}
\mathcal{A}^*(M_0) = \lbrace a \in\am; \;a =a_0 \;\text{in} \;{\overline{\Omega}_T}\setminus X_{r,*}\rbrace.
\end{eqnarray*}
\begin{theorem} \label{20}
Let $T>2 r$. There exist $C>0$ and $ \mu,m\in (0, 1)$, such that if $\Vert\A_{a_1,k_1}-\A_{a_2,k_2} \Vert_1\leq m,$ we have 
\begin{equation}\label{222}
\begin{split}
\Vert a_1-a_2\Vert_{H^{-1}( X_{r,*})}\leq C\left( \Vert\A_{a_1,k_1}-\A_{a_2,k_2} \Vert_1^{\mu/2}+\vert\log\Vert\A_{a_1,k_1}-\A_{a_2,k_2} \Vert_1\vert^{-1}\right), 
\end{split}
\end{equation}
for any $(a_j,k_j) \in \mathcal{A}^*(M_0)\times\km$, $j=1,2$.\\
Moreover, if we assume that  $a_j\in H^{s+1}(\Omega_T), $ $s>\frac{n-1}{2}$, with $\Vert a_j\Vert_{ H^{s+1}(\Omega_T)}\leq M,$
$j = 1, 2,$  for some constant $M > 0$, then there exist $C > 0$ and $\mu_1\in (0,1)$ such that
\begin{equation}\label{232}
\begin{split}
\Vert a_1-a_2\Vert_{L^\infty( X_{r,*})}\leq C\left( \Vert\A_{a_1,k_1}-\A_{a_2,k_2} \Vert_1^{\mu/2}+\vert\log\Vert\A_{a_1,k_1}-\A_{a_2,k_2} \Vert_1\vert^{-1}\right)^{\mu_1}, 
\end{split}
\end{equation}
where $C$ depends only on $\Omega, T,n$ and $M_i$, $i=0,1,2$.
\end{theorem}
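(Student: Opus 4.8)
The plan is to reduce the inverse problem for the Boltzmann equation to a problem of recovering the (difference of the) absorption coefficients from boundary data by testing the albedo operator against a family of highly oscillatory ``geometric optics'' (WKB) solutions, much as in the Bellassoued--Ben A\"icha approach for the dissipative wave equation. First I would set $a = a_1 - a_2$ (extended by zero, since $a_1 = a_2 = a_0$ outside $X_{r,*}$), and I would derive the standard integral identity: if $u_1$ solves \eqref{1} with $(a_1,k_1)$ and incoming data $f$, and $u_2$ the corresponding problem with $(a_2,k_2)$, then $w = u_1 - u_2$ solves a problem of the type \eqref{1.4} with source term $(a_2 - a_1)u_1 + (\G_{k_1}-\G_{k_2})[u_1]$ and zero initial/boundary data, so by Lemma \ref{L.1.1} one controls $w$ in $L^p(Q_T)$, and by the trace estimate \eqref{aa} one controls $w_{|\Sigma^+} = (\A_{a_1,k_1}-\A_{a_2,k_2})[f]$. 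Pairing $w$ against a solution $v$ of the adjoint (backward) Boltzmann equation and integrating by parts over $Q_T$ then yields an identity expressing $\int_{Q_T} a\, u_1\, v \,\dd t\, \dd x\, \dd\theta$ (plus a scattering term) in terms of the boundary mismatch $\langle (\A_{a_1,k_1}-\A_{a_2,k_2})f, \cdot\rangle$.

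Next I would choose $u_1$ and $v$ to be geometric-optics solutions concentrated along a light ray in direction $\theta$: roughly $u_1 \approx e^{i\lambda(\theta\cdot x - t)}\, b_1(t,x,\theta)$ and $v \approx e^{-i\lambda(\theta\cdot x - t)}\, b_2(t,x,\theta)$, where the amplitudes $b_j$ are built to solve the transport equations $\p_t b + \theta\cdot\nabla b + (\text{lower order}) = 0$ along the characteristics, with remainder terms that are $O(\lambda^{-1})$ in the relevant norms (controlled via Lemma \ref{L.1.1} applied to the remainder). Substituting into the integral identity and letting the amplitudes localize, the leading term becomes an integral of $a$ along the ray $\{(t, x_0 + t\theta)\}$ — i.e. the geometry in \eqref{c+}, \eqref{c-}, \eqref{cr} encodes exactly which rays, launched from $\Sigma^-$ and exiting through $\Sigma^+$ within the time window $(0,T)$, can reach a given point; the cloaking region $\mathcal{C}_r$ is precisely the set no such ray reaches (this is Theorem \ref{TT1}), while $X_{r,*}$ is where one has rays in both time directions. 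Integrating these ray transforms (a light-ray / X-ray type transform) and using the Fourier-slice relation, one recovers $\hat a(\xi)$ for $\xi$ in a suitable cone, with an error governed by $\|\A_{a_1,k_1}-\A_{a_2,k_2}\|_1$ on the low frequencies $|\xi| \le \lambda$ and by the a priori bound $\|a\|_{H^{-1}}$ (or the trivial $L^\infty$ bound from $\mathcal{A}(M_0)$) on the tail $|\xi| > \lambda$; optimizing $\lambda$ in terms of $\|\A_{a_1,k_1}-\A_{a_2,k_2}\|_1$ produces the $\|\cdot\|_1^{\mu/2} + |\log\|\cdot\|_1|^{-1}$ bound in \eqref{222}.

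For the $L^\infty$ estimate \eqref{232}, I would interpolate: the $H^{-1}$ control of \eqref{222} together with the a priori bound $\|a_j\|_{H^{s+1}} \le M$ for $s > \frac{n-1}{2}$ gives, by the standard interpolation inequality $\|a\|_{H^t} \le C\|a\|_{H^{-1}}^{\alpha}\|a\|_{H^{s+1}}^{1-\alpha}$ for an appropriate $t > n/2$, an $H^t$ bound on $a = a_1 - a_2$; Sobolev embedding $H^t \hookrightarrow L^\infty$ (valid since $t > n/2$, where here $n$ should be read as the dimension of $X_{r,*}$, namely $n+1$, so one needs $s+1 > (n+1)/2$, i.e. $s > \frac{n-1}{2}$, which is exactly the hypothesis) then converts this into \eqref{232} with $\mu_1 = \alpha$. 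The main obstacle I anticipate is the construction and the norm estimates for the geometric-optics solutions of the Boltzmann equation: unlike the pure transport equation, the presence of the nonlocal scattering operator $\G_k$ couples all directions $\theta$, so the WKB amplitude equations are not ODEs along single rays but involve the integral term; one has to show that $\G_k[u_1]$ is a genuine lower-order (in $\lambda$) perturbation — this uses the admissibility bounds $\int_\s |k|\,\dd\theta' \le M_1$, $\int_\s |k|\,\dd\theta \le M_2$ defining $\km$ and the non-stationary phase to gain a power of $\lambda^{-1}$ — and then absorb the resulting remainder using the a priori estimate \eqref{1.5}. Handling the boundary terms so that only the data $(\A_{a_1,k_1}-\A_{a_2,k_2})[f]$ (and not the full trace of $u_1$) appears, while keeping track of the light-cone geometry that pins the reconstruction to $X_{r,*}$, is the other delicate bookkeeping point.
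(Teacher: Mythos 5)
Your overall strategy matches the paper's: geometric optics solutions of the form $\varphi_\lambda^\pm b_{\pm a}$ (Lemmas \ref{LC1}, \ref{LC2}), the integral identity obtained by pairing the difference of solutions against an adjoint solution (Lemma \ref{Lemma3.1}), passage to the light-ray transform and the Fourier--slice relation (Lemmas \ref{450}, \ref{50.}), and Sobolev interpolation for the $L^\infty$ bound. However, there is one essential step missing, and it is precisely the step that produces the exponent $\mu$ in the statement.

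The light-ray transform $\mathcal{R}(a)(\omega,y)=\int_\R a(t,y+t\omega)\,dt$ with $\omega\in\s$ only determines $\widehat{a}(\tau,\xi)$ for $\tau=\omega\cdot\xi$, i.e.\ only on the cone $E=\{(\tau,\xi):\ |\tau|\le|\xi|\}$. Frequencies with $|\tau|>|\xi|$ (in particular the purely temporal ones) are simply not reached by any choice of $\omega$, so your proposed low/high frequency splitting of the $H^{-1}$ norm cannot be carried out: the low-frequency ball $\{|(\tau,\xi)|<\alpha\}$ contains a full open set on which you have no bound for $\widehat{a}$ whatsoever. This is the classical obstruction for time-dependent coefficients, and the paper overcomes it by a quantitative analytic continuation argument: since $a$ is compactly supported, $\widehat{a}$ is analytic with quantified derivative bounds, and Lemma \ref{l} propagates the estimate $|\widehat{a}|\le C\|\A_{a_1,k_1}-\A_{a_2,k_2}\|_1$ from $\overset{\circ}{E}\cap B(0,1)$ (after rescaling by $\alpha$) to the whole ball $B(0,\alpha)$, at the cost of the factor $e^{\alpha(1-\mu)}$ and the H\"older exponent $\mu$. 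Only then does the optimization $\alpha\sim|\log\|\A_{a_1,k_1}-\A_{a_2,k_2}\|_1|$ yield $\|\cdot\|_1^{\mu/2}+|\log\|\cdot\|_1|^{-1}$; without this step there is no source for the exponent $\mu$ you write in your conclusion. Two smaller remarks: (i) to apply the Fourier--slice relation you need the ray-transform bound for \emph{all} base points $y\in\R^n$, which requires the geometric argument that rays based outside $\mathcal{B}_r$ miss $X_{r,*}$ entirely (this is where the hypothesis $a_1=a_2$ outside $X_{r,*}$ enters); (ii) the remainders $\psi_\lambda^\pm$ in the paper are not shown to be $O(\lambda^{-1})$ --- they tend to $0$ without a rate, via compactness of the scattering operator and weak convergence of $e^{i\lambda x\cdot\theta}$ in $L^2(\s)$ --- but no rate is needed since the boundary-term bound in Lemma \ref{Lemma3.1} is uniform in $\lambda$.
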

The above result claims stable determination of the time-dependent absorption coefficient $a$ from the albedo operator  $\A_{a,k}$, in $X _{r,*}\subset \Omega_T$, provided  $a$ is known outside $X_{r,*}$.\\
As an immediate consequence of Theorem \ref{20}, we have:
\begin{corollary}\label{c}
Let $T > 2r$ and $(a_j,k_j)\in \mathcal{A}^*(M_0)\times\km$, $j = 1, 2$. Then, we
have
$$\A_{a_1,k_1}=\A_{a_2,k_2}\quad\text{implies}\quad a_1=a_2 \;\text{on} \; X_{r,*}.$$
\end{corollary}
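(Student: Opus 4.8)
The plan is to deduce this directly from Theorem~\ref{20}, as the corollary is precisely the borderline case of that stability estimate at zero data error. First I would note that the hypothesis $\A_{a_1,k_1}=\A_{a_2,k_2}$ is equivalent to $\Vert\A_{a_1,k_1}-\A_{a_2,k_2}\Vert_1=0$, which trivially satisfies the smallness constraint $\Vert\A_{a_1,k_1}-\A_{a_2,k_2}\Vert_1\leq m$ for the threshold $m\in(0,1)$ furnished by Theorem~\ref{20}. Since both pairs $(a_j,k_j)$ lie in $\mathcal{A}^*(M_0)\times\km$, estimate \eqref{222} applies to them.

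Next I would examine the right-hand side of \eqref{222} at zero data error: the power term $\Vert\A_{a_1,k_1}-\A_{a_2,k_2}\Vert_1^{\mu/2}$ vanishes, and the logarithmic term $\vert\log\Vert\A_{a_1,k_1}-\A_{a_2,k_2}\Vert_1\vert^{-1}$ is to be read as $0$, using the convention $\vert\log0\vert^{-1}=0$ (equivalently, since $\lim_{\varepsilon\to0^{+}}\vert\log\varepsilon\vert^{-1}=0$ and the coefficients are fixed). Hence \eqref{222} forces $\Vert a_1-a_2\Vert_{H^{-1}(X_{r,*})}\leq0$, so $a_1-a_2=0$ in $H^{-1}(X_{r,*})$. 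Pairing with test functions in $C_c^{\infty}(X_{r,*})$ then yields $a_1=a_2$ as distributions on $X_{r,*}$, hence a.e.\ on $X_{r,*}$; and since $a_1,a_2\in\C^1(\overline{\Omega}_T)$ are continuous, the equality holds everywhere on $X_{r,*}$, which is the claim.

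The main obstacle: there is essentially none at the level of the corollary. All the analytic substance — the analysis of the albedo operator, the reduction to the light-cone region $X_{r,*}$, the treatment of the scattering term $\G_k$, and the derivation of the logarithmic stability bound — is already packaged in Theorem~\ref{20}, so the corollary is a formal consequence. The one point deserving a remark in the write-up is the interpretation of the logarithmic term when the albedo error is zero, which is routine.
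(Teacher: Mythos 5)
Your proof is correct and follows exactly the paper's route: the paper offers no separate argument, simply declaring the corollary ``an immediate consequence of Theorem \ref{20}'', which is precisely your specialization of estimate \eqref{222} to zero data error (with the standard reading $\vert\log 0\vert^{-1}=0$, which is also what the paper's derivation of \eqref{222} yields, since the bound $\Vert a\Vert_{H^{-1}}^{2/\mu}\leq C\bigl(e^{N\alpha}\Vert\A_{a_1,k_1}-\A_{a_2,k_2}\Vert_1^{2}+\alpha^{-2/\mu}\bigr)$ holds for every large $\alpha$ and forces $a=0$ when the albedo difference vanishes).
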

In order to extend the above results to a larger region $X_{r,\sharp}\supset X_{r,*}$, we require more information about the solution $u$ of the Boltzmann  equation (\ref{1}). So, in this case we will add the final data of the solution $u$. This leads to defining the following boundary operator:

$$\begin{array}{llll}
\tilde{\A}_{a,k}:&{\mathscr{L}^-_p(\Sigma^-)}&\longrightarrow &\mathscr{K}^+_p:={\mathscr{L}^+_p(\Sigma^+)}\times L^p(Q)\\
\ &\ \ \ \ f&\longmapsto &\tilde{\A}_{a,k}[f]= ( u_{|\Sigma^+},\, u(T,\cdot,\cdot)),
\end{array}$$ 
where $u$ is solution of \eqref{1},  with  $u_0=0$. It follows from \eqref{aa}, with  $u_0=0,$ that the operator $\tilde{\A}_{a,k}$ is continuous from  ${\mathscr{L}^-_p(\Sigma^-)}$ to $\mathscr{K}^+_p$. Given $a_0\in \C^1(\overline{\Omega}_T)$, we set
\begin{eqnarray*}
\mathcal{A}^\sharp(M_0) = \lbrace a \in\am; \;a =a_0 \;\text{in} \;\overline{\Omega}_T \setminus X_{r,\sharp}\rbrace.
\end{eqnarray*}
\begin{theorem} \label{Theorem2}
Let $T>2 r$. There exist $C>0$ and  $ \mu,m\in (0, 1)$, such that if $\Vert\tilde{\A}_{a_1,k_1}-\tilde{\A}_{a_2,k_2} \Vert_1\leq m,$ we have 
\begin{equation}\label{222-}
\begin{split}
\Vert a_1-a_2\Vert_{H^{-1}( X_{r,\sharp})}\leq C\left( \Vert\tilde{\A}_{a_1,k_1}-\tilde{\A}_{a_2,k_2} \Vert_1^{\mu/2}+\vert\log\Vert\tilde{\A}_{a_1,k_1}-\tilde{\A}_{a_2,k_2} \Vert_1\vert^{-1}\right), 
\end{split}
\end{equation}
for any  $(a_j,k_j) \in \mathcal{A}^\sharp(M_0)\times \km$, $j=1,2$.\\
Moreover, if we assume that $a_j\in H^{s+1}(\Omega_T), $ $s>\frac{n-1}{2}$, with  $\Vert a_j\Vert_{ H^{s+1}(\Omega_T)}\leq M,$
$j = 1, 2,$ for some constant $M > 0$, then there exist  $C > 0$ and $\mu_1\in (0,1)$ such that
\begin{equation}\label{232-}
\begin{split}
\Vert a_1-a_2\Vert_{L^\infty( X_{r,\sharp})}\leq C\left( \Vert\tilde{\A}_{a_1,k_1}-\tilde{\A}_{a_2,k_2} \Vert_1^{\mu/2}+\vert\log\Vert\tilde{\A}_{a_1,k_1}-\tilde{\A}_{a_2,k_2} \Vert_1\vert^{-1}\right)^{\mu_1}, 
\end{split}
\end{equation}
where $C$ depends only on $\Omega, T,n$ and $M_i$, $i=0,1,2$.
\end{theorem}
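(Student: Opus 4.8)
Set $a=a_1-a_2$. Since $a_j\in\mathcal{A}^{\sharp}(M_0)$ coincide with $a_0$ on $\overline{\Omega}_T\setminus X_{r,\sharp}$, the function $a$, extended by $0$ outside $\Omega_T$, is bounded with $\Vert a\Vert_{L^{\infty}}\le 2M_0$ and supported in the fixed compact set $\overline{X_{r,\sharp}}\subset\overline{\Omega}_T$. The plan follows the scheme of Theorem~\ref{20}, the single new ingredient being the use of the final-time component of $\tilde{\A}_{a,k}$ to enlarge the reachable region. It proceeds in four steps: (i) an integral identity linking $\int_{Q_T}a\,u_2\bar v$ to $\tilde{\A}_{a_1,k_1}-\tilde{\A}_{a_2,k_2}$; (ii) insertion of \emph{beam} solutions concentrated along light rays, reducing matters to a bound for the light-ray transform of $a$; (iii) a conditional logarithmic stability estimate for that transform; (iv) an interpolation step for the $L^{\infty}$ bound.

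\emph{Step (i).} For $f\in\mathscr{L}^-_1(\Sigma^-)$ let $u_j$ solve \eqref{1} with coefficients $(a_j,k_j)$, $u_0=0$ and incoming data $f$, and put $w=u_1-u_2$. Then
\[
\partial_tw+\theta\cdot\nabla w+a_1w-\G_{k_1}[w]=-a\,u_2+\G_{k_1-k_2}[u_2]\quad\text{in }Q_T,
\]
with $w(0,\cdot,\cdot)=0$, $w|_{\Sigma^-}=0$ and $(w|_{\Sigma^+},w(T,\cdot,\cdot))=(\tilde{\A}_{a_1,k_1}-\tilde{\A}_{a_2,k_2})[f]$. Let $v$ solve the adjoint equation $-\partial_tv-\theta\cdot\nabla v+a_1v-\G^{*}_{k_1}[v]=0$ in $Q_T$; after the change of variables $t\mapsto T-t$, $\theta\mapsto-\theta$ this is again a problem of the form \eqref{1}, well posed once $v|_{t=T}$ and $v|_{\Sigma^+}$ are prescribed, and Lemma~\ref{Th0} applies to it. Pairing the equation for $w$ with $\bar v$ and integrating by parts over $Q_T$ yields
\[
\int_{Q_T}a\,u_2\,\bar v=-\int_{Q}w(T,\cdot,\cdot)\,\overline{v(T,\cdot,\cdot)}-\int_{\Sigma^+}(\theta\cdot\nu)\,w\,\bar v+\int_{Q_T}\G_{k_1-k_2}[u_2]\,\bar v ,
\]
and by \eqref{aa} the first two terms on the right are bounded by $C\,\Vert\tilde{\A}_{a_1,k_1}-\tilde{\A}_{a_2,k_2}\Vert_1\,\Vert f\Vert_{\mathscr{L}^-_1(\Sigma^-)}\,\big(\Vert v(T,\cdot,\cdot)\Vert+\Vert v|_{\Sigma^+}\Vert\big)$.

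\emph{Step (ii).} Fix $\theta_0\in\s$ and a speed-one spacetime line $\ell\colon s\mapsto(t_0+s,\,x_0+s\theta_0)$ meeting $\Omega_T$ and entering $\Omega$ through $\Sigma^-$. For small $\delta>0$ take $u_2=u_2^{\delta}$ to be the solution of the $(a_2,k_2)$-problem whose incoming data is a mass-one bump in a $\delta$-neighbourhood of $(t_0,x_0,\theta_0)$, and $v=v^{\delta}$ the solution of the adjoint $(a_1,k_1)$-problem with data a mass-one bump near the far endpoint of $\ell\cap\Omega_T$, carried by $v|_{\Sigma^+}$ if $\ell$ leaves $\Omega$ before time $T$ and by $v|_{t=T}$ otherwise; this last alternative is precisely where the extra final-time data is needed, since then $\int_{Q}w(T,\cdot,\cdot)\overline{v(T,\cdot,\cdot)}$ does not vanish. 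Each of $u_2^{\delta},v^{\delta}$ is the sum of an explicit ballistic part (a transported bump carrying the attenuation factor $\exp(-\int a_j)$ along the ray) and a scattered remainder controlled via Lemma~\ref{L.1.1}. Substituting into the identity of Step~(i) and letting $\delta\to0$: the scattered remainders and $\int_{Q_T}\G_{k_1-k_2}[u_2]\bar v$ are of higher order because they carry an extra integration over the shrinking set of directions (this is where $n\ge2$ is used, and it is why no smallness or equality of $k_1,k_2$ is required), while $\Vert v(T,\cdot,\cdot)\Vert+\Vert v|_{\Sigma^+}\Vert$ and $\Vert f\Vert_{\mathscr{L}^-_1}$ stay bounded, so that, the attenuation prefactor lying between $e^{-2M_0T}$ and $e^{2M_0T}$,
\[
\Big|\int_{\ell\cap\Omega_T}a\Big|\le C\,\Vert\tilde{\A}_{a_1,k_1}-\tilde{\A}_{a_2,k_2}\Vert_1 .
\]
A short geometric argument (using $\Omega\subseteq B(0,r/2)$ and \eqref{c+}) shows that \emph{every} speed-one line meeting $\overline{X_{r,\sharp}}$ enters $\Omega$ through $\Sigma^-$ at a time in $(0,T)$ — this is exactly where the cone $\mathcal{C}_r^+$, and only it, enters. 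Since $a$ is supported in $\overline{X_{r,\sharp}}$, the left-hand side above is the full light-ray transform $(La)(\ell)$, and integrating the pointwise bound over the compact family of lines meeting $\overline{X_{r,\sharp}}$ gives $\Vert La\Vert_{L^2}\le C\,\Vert\tilde{\A}_{a_1,k_1}-\tilde{\A}_{a_2,k_2}\Vert_1$.

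\emph{Steps (iii)--(iv).} By the Fourier-slice formula $\widehat{La}$ determines $\hat a(\tau,\xi)$ on the spacelike cone $\{|\tau|\le|\xi|\}$ with $\int_{|\tau|\le|\xi|}|\hat a|^2\le C\Vert La\Vert_{L^2}^2$; since $a$ is compactly supported, $\hat a$ extends to an entire function with $|\hat a(\zeta)|\le C(M_0)e^{R_0|\mathrm{Im}\,\zeta|}$, so a harmonic-measure (Agmon-type) estimate bounds $\hat a$ on $\{|\tau|>|\xi|,\ |(\tau,\xi)|\le R\}$ by $C\,e^{cR}\Vert La\Vert_{L^2}^{\gamma}$ for some $\gamma\in(0,1)$. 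Splitting the frequency integral defining $\Vert a\Vert_{H^{-1}(\R^{1+n})}^2$ at $|\zeta|=R$ — spacelike part $\le C\Vert La\Vert^2$, timelike low-frequency part $\le Ce^{cR}\Vert La\Vert^{\gamma}$, high-frequency part $\le CR^{-1}\Vert a\Vert_{L^2}\le CR^{-1}$ — and choosing $R\sim|\log\Vert La\Vert_{L^2}|$ gives $\Vert a\Vert_{H^{-1}(\R^{1+n})}\le C(\Vert La\Vert_{L^2}^{\mu}+|\log\Vert La\Vert_{L^2}|^{-1})$ for $\Vert La\Vert_{L^2}$ small; together with Step~(ii) and $\Vert a\Vert_{H^{-1}(X_{r,\sharp})}\le\Vert a\Vert_{H^{-1}(\R^{1+n})}$ this is \eqref{222-}, $m$ being chosen so that $\Vert\tilde{\A}_{a_1,k_1}-\tilde{\A}_{a_2,k_2}\Vert_1\le m$ forces $\Vert La\Vert_{L^2}$ into the small regime. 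If moreover $\Vert a_j\Vert_{H^{s+1}(\Omega_T)}\le M$ with $s>\frac{n-1}{2}$, pick $\sigma\in(\frac{n+1}{2},s+1)$ and $\beta=\frac{\sigma+1}{s+2}\in(0,1)$; the interpolation inequality $\Vert a\Vert_{H^{\sigma}}\le\Vert a\Vert_{H^{-1}}^{1-\beta}\Vert a\Vert_{H^{s+1}}^{\beta}$, the embedding $H^{\sigma}(\R^{1+n})\hookrightarrow L^{\infty}$ and a bounded extension of $a$ off $X_{r,\sharp}$ give $\Vert a\Vert_{L^{\infty}(X_{r,\sharp})}\le C\Vert a\Vert_{H^{-1}(\R^{1+n})}^{1-\beta}$, which combined with \eqref{222-} yields \eqref{232-} with $\mu_1=1-\beta$. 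The technical core is Step~(ii) — making the beam construction rigorous in the $L^1$-based trace spaces $\mathscr{L}^{\pm}_1$ and verifying that the scattering contributions are genuinely negligible as $\delta\to0$ — together with the quantitative stability of the light-ray transform in Step~(iii).
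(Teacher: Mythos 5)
Your outline reproduces the paper's architecture almost exactly — integral identity against an adjoint solution, reduction to a bound on the light-ray transform of $a$, the observation that the final-time component of $\tilde{\A}_{a,k}$ is what allows rays exiting after time $T$ (hence the enlargement from $X_{r,*}$ to $X_{r,\sharp}$), the geometric fact that lines through $B(0,r/2)$ miss $\mathcal{C}_r^+\supset X_{r,\sharp}$, the Fourier-slice identity on the cone $\{|\tau|\le|\xi|\}$, quantitative analytic continuation, frequency splitting with $R\sim|\log\|\cdot\||$, and Sobolev interpolation for the $L^\infty$ bound. Steps (i), (iii), (iv) and the geometry in Step (ii) match Lemmas \ref{Lemma5.1}, \ref{50}, \ref{l} and the end of the proof of Theorem \ref{20}. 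The one genuine divergence is the mechanism by which the ray-transform bound is extracted. The paper does \emph{not} use beams concentrated in all variables: it uses oscillating geometric-optics solutions $u_\lambda^{\pm}=\varphi_\lambda^{\pm}b_{a}+\psi_\lambda^{\pm}$ with phase $e^{\pm i\lambda(t-x\cdot\theta)}$ (Lemmas \ref{LC1}, \ref{LC2}), kills every scattering contribution by letting $\lambda\to\infty$ (compactness of $\G_k$ on $L^2(\s)$ plus weak convergence of $\theta'\mapsto e^{-i\lambda x\cdot\theta'}$, Lemma \ref{L2.1}), concentrates only in the angular variable afterwards via the Poisson kernel $\varrho_h$, recovers the pointwise-in-$y$ bound by a duality argument for the bilinear form $(\phi_1,\phi_2)\mapsto\int\phi_1\phi_2R(y,\omega)\,dy$ rather than by shrinking spatial bumps, and converts $\int a\,\varphi^+\varphi^-b_a$ into $\int\phi^+\phi^-[e^{-\int_0^Ta}-1]$ by the exact total-derivative identity before linearizing with $|X|\le e^{M}|e^{X}-1|$.

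The gap in your version sits precisely at the point where the paper supplies a complete proof. Your claim that, as $\delta\to0$, the scattered remainders and $\int_{Q_T}\G_{k_1-k_2}[u_2]\bar v$ are negligible "because they carry an extra integration over the shrinking set of directions" is the singular-decomposition analysis of the albedo operator (Choulli--Stefanov, Bal--Jollivet); it is plausible and in principle completable, but it is the technical core and you leave it as an assertion, whereas the paper replaces it by the proved limits \eqref{3005}, \eqref{4.66}, \eqref{4.77}. There is also a normalization inconsistency: you take both the incoming data $f$ and the adjoint data to be \emph{mass-one} bumps, but the duality estimate in Step (i) pairs the $L^1$-valued outputs $(w|_{\Sigma^+},w(T,\cdot,\cdot))$ against $v$, so boundedness of the right-hand side against the $\mathscr{L}^-_1\to\mathscr{K}^+_1$ operator norm requires the adjoint data to be $L^\infty$-normalized (height one), not $L^1$-normalized — exactly the asymmetry $\|\phi^+\|_{L^1}\,\|\phi^-\|_{L^\infty}$ appearing in Lemma \ref{Lemma5.1}. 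With mass-one adjoint data the factor $\|v(T,\cdot,\cdot)\|_{L^\infty}+\|v|_{\Sigma^+}\|_{L^\infty}$ blows up as $\delta\to0$. Both defects are repairable, but as written Step (ii) does not yet constitute a proof.
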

In the previous two case, we can see that we can not recover the unknown absorption coefficient $a$ over the whole domain, since the initial data $u_0$ is zero. However, we shall prove that this is no longer the case by considering all possible initial data.
We define the following space $\mathscr{K}^-_p:=\mathscr{L}^-_p(\Sigma^-)\times L^p(Q)$. In this case we will consider observations given by the following operator:
$$\begin{array}{llll}
\mathcal{I}_{a,k}:  & \ \ \ \ \quad \mathscr{K}^-_p&\longrightarrow &\mathscr{K}^+_p\\
 & g=(f,u_0)&\longmapsto &\mathcal{I}_{a,k}(g)= (u_{|\Sigma^+},u(T,\cdot,\cdot)),
\end{array}$$ 
where $u$ is the unique  solution of \eqref{1}. It follows from \eqref{aa} that the operator  $\mathcal{I}_{a,k}$ is continuous from  $\mathscr{K}^-_p$ to $\mathscr{K}^+_p$. Having said that, we are now in position to state the last main result.
\begin{theorem} \label{Theorem3}
Let $T>2r$. There exist $C>0$ and $ \mu,m\in (0, 1)$, such that if  $\Vert\mathcal{I}_{a_1,k_1}-\mathcal{I}_{a_2,k_2} \Vert_1\leq m,$ we have
\begin{equation}\label{2223}
\begin{split}
\Vert a_1-a_2\Vert_{H^{-1}( \Omega_T)}\leq C\left( \Vert\mathcal{I}_{a_1,k_1}-\mathcal{I}_{a_2,k_2} \Vert_1^{\mu/2}+\vert\log\Vert\mathcal{I}_{a_1,k_1}-\mathcal{I}_{a_2,k_2} \Vert_1\vert^{-1}\right), 
\end{split}
\end{equation}
for any $(a_j ,k_j)\in\am\times\km$, $j=1,2.$\\
Moreover, if we assume that $a_j\in H^{s+1}(\Omega_T), $ $s>\frac{n-1}{2}$, with $\Vert a_j\Vert_{ H^{s+1}(\Omega_T)}\leq M,$ 
$j = 1, 2,$ for some constant $M > 0$, then there exist  $C> 0$ and $\mu_1\in (0,1)$ such that
\begin{equation}\label{2323}
\begin{split}
\Vert a_1-a_2\Vert_{L^\infty( \Omega_T)}\leq C\left( \Vert\mathcal{I}_{a_1,k_1}-\mathcal{I}_{a_2,k_2} \Vert_1^{\mu/2}+\vert\log\Vert\mathcal{I}_{a_1,k_1}-\mathcal{I}_{a_2,k_2} \Vert_1\vert^{-1}\right)^{\mu_1}, 
\end{split}
\end{equation}
where $C$ depends only on $\Omega, T,n$ and $M_i$, $i=0,1,2$.
\end{theorem}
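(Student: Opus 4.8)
The plan is to reduce Theorem~\ref{Theorem3} to Theorem~\ref{Theorem2} by exploiting the extra freedom coming from arbitrary initial data. Write $a=a_1-a_2$ and $k=k_1-k_2$. The key geometric observation is that the union of the forward cone $\mathcal{C}_r^+$ over translations (or, equivalently, over shifting the ``time origin'' of the problem) exhausts $\Omega_T$: a point $(t,x)\in\Omega_T$ with $|x|<r/2$ lies in $X_{r,\sharp}$ as soon as $t>r/2+|x|$, but for small $t$ it may sit in the cloaking region $\mathcal{C}_r$, and it is precisely there that the boundary response with $u_0=0$ carries no information. Allowing $u_0\neq 0$ repairs this: by prescribing initial data supported near a given point one effectively ``illuminates'' the cloaking region as well. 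Concretely, I would first establish an integral identity: for $g=(f,u_0)$, if $u_j$ solves \eqref{1} with data $(f,u_0)$ and coefficients $(a_j,k_j)$, and $w=u_1-u_2$, then $w$ solves \eqref{1.4} with source term $v=-a\,u_2+\G_k[u_2]$, zero initial data and zero incoming data, and $\mathcal{I}_{a_1,k_1}(g)-\mathcal{I}_{a_2,k_2}(g)=(w_{|\Sigma^+},w(T,\cdot,\cdot))$. Pairing this with suitable solutions of the adjoint (backward) Boltzmann equation and using Lemma~\ref{L.1.1} yields, after integration by parts in $Q_T$, a bound of the form
\begin{equation*}
\left|\int_{Q_T}\bigl(a\,u_2-\G_k[u_2]\bigr)\,\phi\,\dd t\,\dd x\,\dd\theta\right|\leq C\,\Vert\mathcal{I}_{a_1,k_1}-\mathcal{I}_{a_2,k_2}\Vert_1\,\Vert f\Vert_{\mathscr{L}^-_1}\,\Vert\phi_{|\Sigma^-}\,, u_0\Vert,
\end{equation*}
valid for all admissible $f,u_0$ and all adjoint solutions $\phi$.

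Next I would choose the test functions to be (approximate) geometric-optics solutions of the transport equation, i.e. solutions concentrating along a family of light rays $x(t)=x_0+t\theta_0$, carrying an exponential weight that encodes the line integral of $a$ along the ray. This is the standard device used for the hyperbolic and transport inverse problems (cf. the references to Bal--Jollivet, Choulli--Stefanov, and the wave-equation works of Bellassoued--Ben A\"icha cited in the introduction): one builds $u_2\approx e^{i\lambda(\theta_0\cdot x-t)}b(t,x,\theta)$ and a matching $\phi$, with $b$ solving the transport equation along $\theta_0$ with the absorption term, so that in the limit $\lambda\to\infty$ the scattering contribution $\G_k[u_2]$ and the remainder terms are negligible and the leading identity becomes a statement about the light-ray transform (X-ray transform along null directions $(\theta_0,1)$) of $a$. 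Because now $u_0$ is free, the rays are not forced to originate on $\Sigma^-$ after the ``dead'' time interval; one may start a ray at any interior point at time $0$, and hence recover the light-ray transform of $a$ over \emph{all} of $\Omega_T$, not merely over $X_{r,\sharp}$. Inverting this transform --- using the Fourier-slice / stability estimate for the (Lorentzian) light-ray transform on the slab $\Omega_T$, exactly as in the proof of Theorem~\ref{Theorem2} --- gives control of $\Vert a\Vert_{H^{-1}(\Omega_T)}$ by the $H^{-1}$-norm of the light-ray transform, which in turn is bounded by a power of $\Vert\mathcal{I}_{a_1,k_1}-\mathcal{I}_{a_2,k_2}\Vert_1$ plus a logarithmic term coming from the high-frequency cutoff $\lambda$ (balancing the growth $e^{C\lambda}$ of the remainder against $\lambda^{-s}$-type decay). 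This yields \eqref{2223}. The $L^\infty$ estimate \eqref{2323} then follows by the usual interpolation inequality $\Vert a\Vert_{L^\infty}\leq C\Vert a\Vert_{H^{-1}}^{\alpha}\Vert a\Vert_{H^{s+1}}^{1-\alpha}$ with $s>\tfrac{n-1}{2}$, together with the a priori bound $\Vert a_j\Vert_{H^{s+1}(\Omega_T)}\leq M$; this is verbatim the argument ending the proofs of Theorems~\ref{20} and~\ref{Theorem2}.

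The main obstacle I expect is twofold. First, the construction and control of the geometric-optics solutions for the \emph{full} Boltzmann operator (not just the free transport): one must show that the scattering term $\G_{k_2}$ produces only a lower-order perturbation, which requires the $\km$-type bounds on the kernel (the $M_1,M_2$ hypotheses) and a careful Neumann-series/iteration argument to absorb $\G_k[u_2]$; the estimate \eqref{1.5} of Lemma~\ref{L.1.1} is the right tool but the exponential-in-$\lambda$ dependence of the remainder must be tracked explicitly. Second, and more delicate, is making precise the claim that varying $u_0$ suffices to cover the cloaking region: one has to check that the relevant rays, started from interior points at $t=0$, stay inside $\Omega$ long enough and exit through $\Sigma^+$ (using convexity of $\Omega$ and $T>2r$), and that the corresponding adjoint solutions are legitimate test functions in the identity above --- i.e. the pairing of $u_0$ with $\phi(0,\cdot,\cdot)$ really appears with the right sign and can be read off from $\mathcal{I}_{a,k}$. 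Once these two points are handled, the reduction to the light-ray transform and then to Theorem~\ref{Theorem2}-style inversion is routine.
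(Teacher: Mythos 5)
Your proposal follows essentially the same route as the paper: the paper's proof of Theorem \ref{Theorem3} (Lemma \ref{503}) drops the support restriction on $\phi^{\pm}$ so that the geometric-optics solution $u_\lambda^+$ has nonzero initial trace $u_0=u_\lambda^+(0,\cdot,\cdot)$, which is admissible precisely because $\mathcal{I}_{a,k}$ accepts arbitrary initial data; this yields the light-ray transform bound for all $y\in\R^n$ and hence the Fourier estimate on the whole cone $E$, after which the analytic-continuation, optimization in $\alpha$, and Sobolev interpolation steps are identical to those of Theorems \ref{20} and \ref{Theorem2}. Your identification of the key point --- that freeing $u_0$ lets the rays start at interior points at $t=0$ and thereby illuminates the cloaking region --- is exactly the paper's argument.
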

Using the same ideas considered in the proof of Theorem \ref{20} and assuming that the scattering coefficient is of the form  $k(x, \theta, \theta') = \rho(x)\kappa(\theta,\theta')$, where $\kappa(\theta,\theta')$ is assumed known, we obtain the following
 result on the identification of both $a$ in $X_{r,*}$ and $\rho$ in $\Omega.$


\begin{theorem}\label{t3}

Let  $(a_j ,k_j)\in\mathcal{A}^*(M_0)\times\km$, $j=1,2$, such that $k_j(x, \theta, \theta') = \rho_j(x)\kappa(\theta,\theta')$,  with $\rho_j\in L^\infty(\Omega),$ $\kappa \in L^\infty(\s \times \s)$ and $\kappa(\theta,\theta)\neq 0,$   $\forall\theta\in\s$. If $T > 2r$ and $\A_{a_1,k_1}=\A_{a_2,k_2}$, then $a_1 = a_2$ in $X_{r,*}$ and  $\rho_1 = \rho_2$ a.e, in $\M$.

\end{theorem}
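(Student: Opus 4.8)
The plan is to bootstrap the identification of $a$ in $X_{r,*}$ — already furnished by Theorem~\ref{20} (its uniqueness consequence, Corollary~\ref{c}) — into the identification of the scalar factor $\rho$ on all of $\Omega$. First I would observe that since $a_j \in \mathcal{A}^*(M_0)$, the coefficients agree outside $X_{r,*}$ by hypothesis, and $\A_{a_1,k_1}=\A_{a_2,k_2}$ forces $a_1=a_2$ on $X_{r,*}$ by Corollary~\ref{c}; hence $a_1=a_2$ on all of $\Omega_T$. This reduces the problem to showing that, with a \emph{common} absorption coefficient $a:=a_1=a_2$ now fixed, the equality of albedo operators $\A_{a,k_1}=\A_{a,k_2}$ implies $\rho_1=\rho_2$ a.e.\ in $\Omega$, where $k_j(x,\theta,\theta')=\rho_j(x)\kappa(\theta,\theta')$.

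For the second part I would extract the scattering contribution from the albedo operator via the standard singular-decomposition / Duhamel expansion of the solution. Writing $u=u^{(0)}+u^{(1)}+\cdots$ where $u^{(0)}$ is the ballistic (no-collision) part solving the transport equation with $\G_k[u]$ dropped, and $u^{(1)}=\G_k$ applied once to $u^{(0)}$ and propagated, one knows from the work of Choulli--Stefanov and Bal--Jollivet that the ballistic part $u^{(0)}$ depends only on $a$ (hence is common to both problems), while the single-scattering term $u^{(1)}$ carries $k$ linearly and appears in the albedo kernel as a distribution supported on a lower-dimensional manifold whose density, after a limiting/localization argument on the incoming data (concentrating $f^-$ near a point $(t_0,x_0,\theta_0)\in\Sigma^-$ along a line through a chosen interior point $y\in\Omega$), recovers $\int k(y,\theta,\theta_0)\,(\cdots)$ — and with $k_j=\rho_j\kappa$ this isolates $\rho_j(y)\kappa(\theta_0',\theta_0)$ along the relevant directions. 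Evaluating at a pair of directions for which $\kappa(\theta,\theta)\neq 0$ (using the hypothesis $\kappa(\theta,\theta)\neq 0$ for all $\theta$, and continuity/measurability to pass through the diagonal or to a near-diagonal cone) yields $\rho_1(y)=\rho_2(y)$ for a.e.\ $y\in\Omega$.

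The main obstacle I anticipate is the time-dependence of $a$: in the classical stationary theory the ballistic attenuation factor is $\exp(-\int a\,ds)$ along a fixed ray, and the singular expansion of the albedo kernel is by now routine; here $a=a(t,x)$ so the characteristics in $(t,x)$-space are lines $(s,x_0+s\theta)$ and the attenuation becomes $\exp\!\big(-\int_0^\tau a(t-s,x-s\theta)\,ds\big)$, which is still explicit once $a$ is known and common to both problems — so the key point is that after Step~1 we may treat $a$ as a \emph{known} function and the difference $u_1-u_2$ solves a Boltzmann equation with the \emph{same} $a$ and source $\G_{k_1-k_2}[u_2]$, to which Lemma~\ref{L.1.1} applies. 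The delicate analytic work is then the trace/limiting argument that extracts the density of the single-scattering kernel while controlling the contributions of the multiple-scattering terms $u^{(m)}$, $m\geq 2$, which are more regular (a continuity gain in the outgoing variables) and hence do not interfere with the singular part; here the bounds $\int_\s|k|\,d\theta'\leq M_1$, $\int_\s|k|\,d\theta\leq M_2$ from $\km$ are exactly what guarantee the geometric convergence of the Duhamel series and the requisite regularity of the tail.
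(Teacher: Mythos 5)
Your first step is exactly the paper's: Corollary \ref{c} gives $a_1=a_2$ on $X_{r,*}$, and since both coefficients equal $a_0$ outside $X_{r,*}$ they coincide on all of $\overline{\Omega}_T$; that part is fine. The gap is in the second step. The singular-decomposition route you sketch (ballistic $+$ single-scattering $+$ smoother remainder) recovers, at best, the values $k(y,\theta',\theta)$ for $\theta'\neq\theta$: a particle that scatters at $y$ from direction $\theta$ into the \emph{same} direction $\theta$ continues along the same space-time ray as the unscattered particle, so the forward-scattering (diagonal) contribution is absorbed into the ballistic attenuation and is not separately visible in the albedo kernel. But the theorem's hypothesis is non-vanishing of $\kappa$ precisely \emph{on the diagonal}, $\kappa(\theta,\theta)\neq 0$, with $\kappa$ only in $L^\infty(\s\times\s)$; your suggestion to ``pass through the diagonal by continuity/measurability'' has no basis --- the diagonal is a null set in $\s\times\s$, and off-diagonal values of an $L^\infty$ kernel say nothing about it. So either you must change the hypothesis (e.g.\ $\kappa\neq 0$ on a set of positive measure) or produce an argument that actually reaches the diagonal, which is exactly what your sketch omits. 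A second substantive debt: the singular decomposition of the albedo kernel for a Boltzmann equation with time-dependent $a(t,x)$ is nowhere established in this paper (which never introduces the Schwartz kernel of $\A_{a,k}$ at all), so importing it from Choulli--Stefanov or Bal--Jollivet, who treat time-independent coefficients, would itself require proof.

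For comparison, the paper reaches the diagonal directly and stays entirely within the geometric-optics framework of Sections 3--4: it inserts into the identity $\int_{Q_T}\G_k[u^+_{h,\lambda}]\,u^-_{h',\lambda}\,d\theta\,dx\,dt=0$ (which follows from $\A_{a_1,k_1}=\A_{a_2,k_2}$ together with $a_1=a_2$) two G.O.\ solutions whose angular profiles are \emph{both} concentrated at the same direction $\omega$ via the Poisson kernels $\varrho_h(\omega,\cdot)$ and $\varrho_{h'}(\omega,\cdot)$. After letting $h,h'\to 1$, the two phases combine into $e^{i\lambda x\cdot(\theta'-\theta)}$ inside $\G_k$, and the limit $\lambda\to\infty$ (compactness of the integral operator with kernel $k$; Lemmas \ref{l5.2}--\ref{5.4}) suppresses every contribution except the one along $\theta'=\theta=\omega$, leaving
\begin{equation*}
\int_0^T\int_\Omega k(x,\omega,\omega)\,\phi_1(x-t\omega)\,\phi_2(x-t\omega)\,dx\,dt=0,
\end{equation*}
i.e.\ the vanishing of the X-ray transform of $\rho_2-\rho_1$ weighted by $\kappa(\omega,\omega)$. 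That is where the diagonal hypothesis is used, and it is the step your proposal does not reproduce.
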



\section{Non uniqueness in determining the time-dependent absorption coefficient}
\setcounter{equation}{0}
In this section we aim to show that it is hopeless to recover the time-dependent absorption coefficient $a$ in the {\it{cloaking}} region $\mathcal{C}_r$ in the case where the initial condition is zero.
\subsection{Preliminary}
Let us first introduce the following notations. We define
$$V=\bigcup_{0\leq \tau \leq t'}D(\tau)=\bigcup_{0\leq \tau \leq t'}\left(\mathcal{C}_r\cap\left\lbrace t=\tau \right\rbrace  \right), \;$$
where $0<t'<r/2$ and  $\mathcal{C}_r $ is defined by \eqref{cr}. Moreover, we denote by
$$\mathscr{S}=\p \mathcal{C}_r \cap (\Omega\times ]0,t'[),\quad \text{and}\quad \p V=\mathscr{S}\cup D(t')\cup D(0).$$
We denote $d\sigma$  the surface element of $\mathscr{S}$ and the vector $(\eta,\mu_1,\mu_2,\cdots,\mu_n)\in \R^{n+1}$ is
the outward unit normal vector at $(x, t) \in {\mathscr{S}}$ such that
\begin{equation}\label{44}
\eta=\big(\sum_{j=1}^n \mu_j^2\big)^{1/2}.
\end{equation}
\begin{lemma}\label{12344}
Let  $(a,k)\in\mathcal{A}(M_0)\times\km$. For $f\in \mathscr{L}_p^-(\Sigma^-)$ we denote by $u$ the solution of the linear Boltzmann equation
\begin{equation*}
\left\{
  \begin{array}{ll}
\partial_t u+\theta\cdot\nabla u+a(t,x)u=\G_k[u](t,x,\theta)& \mbox{in}\quad Q_T ,\\      
 u(0,x,\theta)=0& \mbox{in}\quad Q ,\\
u(t,x,\theta)=f(t,x,\theta) & \mbox{on}\quad  \Sigma^-.\\
\end{array}
\right.
\end{equation*}
Then, for any $(t,x)\in \mathcal{C}_r $ and $\theta\in \s$, we have $u(t,x,\theta)=0.$
\end{lemma}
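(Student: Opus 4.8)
The plan is to exploit the finite speed of propagation that is intrinsic to the transport operator $\p_t + \theta\cdot\nabla$. The key geometric observation is this: the cloaking region $\mathcal{C}_r = \{(t,x)\in X_r : |x|\le r/2 - t,\ 0\le t\le r/2\}$ consists of points that, when one traces the characteristic curve $s\mapsto (t-s, x-s\theta)$ backward in time (for any direction $\theta\in\s$), leave the cylinder $Q_T$ only through the bottom face $\{t=0\}$, and never through the lateral incoming boundary $\Sigma^-$. Indeed, along the backward characteristic starting at $(t,x)\in\mathcal{C}_r$, the spatial point $x-s\theta$ satisfies $|x-s\theta|\le |x|+s\le r/2 - t + s$; reaching the time level $t-s=0$ requires $s=t$, at which point $|x-s\theta|\le r/2$, so the characteristic stays inside $\Omega\subseteq B(0,r/2)$ for all $s\in[0,t]$ and hits $\{t=0\}$ before it could possibly exit spatially. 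Hence the value of $u$ at such a point is, by the method of characteristics (Duhamel's formula for the transport semigroup with the zero-order term $a$ and the collision term $\G_k[u]$), determined entirely by the initial datum $u_0=0$ and by the values of $u$ at earlier times along the same backward cone.

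The first concrete step I would take is to set up the integral (mild-solution) formulation: writing the equation as $(\p_t+\theta\cdot\nabla)u = -a u + \G_k[u]$ and integrating along characteristics, for $(t,x)\in\mathcal{C}_r$ one gets
\begin{equation*}
u(t,x,\theta) = \int_0^t e^{-\int_\tau^t a(\sigma, x-(t-\sigma)\theta)\,\dd\sigma}\,\G_k[u]\big(\tau, x-(t-\tau)\theta, \theta\big)\,\dd\tau,
\end{equation*}
the boundary term being absent precisely because the backward characteristic does not meet $\Sigma^-$, and the initial term vanishing because $u_0=0$. The second step is a Gr\"onwall/fixed-point argument restricted to the region $V=\bigcup_{0\le\tau\le t'}D(\tau)$: one shows that for $(t,x)\in\mathcal{C}_r$, $\G_k[u](\tau,x-(t-\tau)\theta,\theta')$ only involves values $u(\tau,y,\theta')$ with $(\tau,y)\in\mathcal{C}_r$ again (the collision operator is local in $x$, so it does not move the spatial point, and $(\tau, x-(t-\tau)\theta)\in\mathcal{C}_r$ whenever $(t,x)\in\mathcal{C}_r$ and $0\le\tau\le t$, since $|x-(t-\tau)\theta|\le |x|+(t-\tau)\le r/2-\tau$). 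Setting $w(\tau)=\sup_{(\tau,y)\in D(\tau),\,\theta'\in\s}|u(\tau,y,\theta')|$, one derives $w(t)\le M_1 e^{M_0 T}\int_0^t w(\tau)\,\dd\tau$, and Gr\"onwall's inequality forces $w\equiv 0$ on $[0,r/2]$. An alternative, cleaner route is to invoke uniqueness in Lemma \ref{L.1.1}: the function $u$ restricted to the backward domain of dependence of $\mathcal{C}_r$ solves the same initial-boundary value problem as the zero function (zero source, zero initial data, zero incoming flux on the portion of $\Sigma^-$ that lies in that domain of dependence), hence equals zero there by the uniqueness part of Lemma \ref{L.1.1} applied on the subdomain.

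I expect the main obstacle to be purely bookkeeping rather than conceptual: one must verify carefully that the backward characteristic cone emanating from every point of $\mathcal{C}_r$ (over all $\theta\in\s$) stays inside $Q_T\cup\{t=0\}$ and avoids $\Sigma^-$, which amounts to the elementary but slightly fiddly inequality $|x-s\theta|\le r/2-(t-s)$ for $0\le s\le t$ when $(t,x)\in\mathcal{C}_r$ — and, for the Gr\"onwall approach, to confirm that the relevant quantities are finite so that the a priori estimate can be bootstrapped (here one uses $a\in\mathcal{A}(M_0)$, $k\in\mathcal{K}(M_1,M_2)$, and the regularity $u\in\mathcal{C}^1([0,T];L^p(Q))\cap\mathcal{C}([0,T];W_p)$ from Lemma \ref{Th0}). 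The surface $\mathscr{S}$ and the normal vector relation \eqref{44} introduced just before the lemma suggest the authors prefer an energy-type argument on $V$ with an integration by parts over $\mathscr{S}$; in that case the key point is that the coefficient of the boundary integrand on $\mathscr{S}$, namely $\eta + \theta\cdot\mu$ where $\mu=(\mu_1,\dots,\mu_n)$, is nonnegative by Cauchy–Schwarz together with \eqref{44} (since $|\theta\cdot\mu|\le|\mu|=\eta$), so that the boundary term on $\mathscr{S}$ has a favorable sign and can be dropped, leaving only the vanishing contributions from $D(0)$ (where $u_0=0$) and a Gr\"onwall argument in $t'$ up to $D(t')$.
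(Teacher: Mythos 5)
Your final paragraph is, in outline, exactly the paper's proof: multiply the equation by $2u$, integrate over $V\times\s$, apply the divergence theorem, use \eqref{44} (so that $\eta+\theta\cdot\mu\geq \eta-|\mu|=0$) to discard the boundary contribution on $\mathscr{S}$, use $u_0=0$ on $D(0)$, absorb the $a$- and $\G_k$-terms via the bounds $M_0,M_1,M_2$, and conclude by Gr\"onwall up to $D(t')$. So the proposal is correct and, in that part, coincides with the paper's argument. Your preferred route via characteristics and Duhamel's formula is a legitimate, more elementary alternative, with two caveats. First, the $L^\infty$-in-space Gr\"onwall on $w(\tau)=\sup|u(\tau,\cdot,\cdot)|$ is not available for $f\in\mathscr{L}^-_p(\Sigma^-)$ with $p<\infty$, since $u(\tau,\cdot,\cdot)$ is then only an $L^p$ function and the supremum may be infinite; you should instead run Gr\"onwall on $\int_{D(\tau)\times\s}|u|^p$, using that the map $x\mapsto x-(t-\tau)\theta$ sends $D(t)$ into $D(\tau)$ measure-preservingly together with the $L^p$-boundedness of $\G_k$ from $\km$ --- this is in effect what the paper's energy identity accomplishes in $L^2$. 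Second, your justification that the backward characteristic ``stays inside $\Omega$'' only establishes that it stays inside $B(0,r/2)\supseteq\Omega$; to guarantee it never meets $\Sigma^-\subset[0,T]\times\partial\Omega\times\s$ (where $u=f$ is arbitrary) one needs the cone's cross-sections $\set{|x|\leq r/2-\tau}$ to lie inside $\Omega$, as in the paper's model case $\Omega=B(0,r/2)$. This is, however, an implicit assumption of the paper's own proof as well (its identity $\p V=\mathscr{S}\cup D(t')\cup D(0)$ requires it), so it is a shared caveat rather than a gap of your argument relative to the paper.
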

\begin{proof}
We denote by $Pu=\p_t u+\theta\cdot\nabla  u +a(t,x)u-\G_k[u]$. By direct calculations, we easily verify that
\begin{align*}
\int_{\s}&\int_V  2P u(t,x,\theta)u(t,x,\theta) dx dtd\theta \\
&=\int_{\s}\int_V 2 \partial_t u(t,x,\theta) u(t,x,\theta)dx dtd\theta+\int_{\s}\int_V 2 \theta\cdot\nabla u(t,x,\theta) u(t,x,\theta)dx dtd\theta\\&+\int_{\s}\int_V 2a(t,x)\vert u(t,x,\theta) \vert^2dx dt d\theta-\int_{\s}\int_V 2\G_k[u](t,x,\theta)u(t,x,\theta) dxdt d\theta
\\
&=\int_{\s}\int_V \frac{d}{dt}\vert u(t,x,\theta) \vert^2 dxdtd\theta+\int_{\s}\int_V \text{div}(\theta \; \vert u(t,x,\theta)\vert^2)dxdtd\theta\\&+\int_{\s}\int_V 2a(t,x)\vert u(t,x,\theta)\vert^2 dx dtd\theta-\int_{\s}\int_V 2\G_\kappa[u](t,x,\theta)u(t,x,\theta) dxdt d\theta
.\end{align*}
Applying the divergence theorem, one gets
\begin{multline*}
\int_{\s}\int_V 2P u(t,x,\theta)u(t,x,\theta) dx dtd\theta\\
=\int_{\s}\int_{\mathscr{S}} \vert u(t,x,\theta) \vert^2 \eta d\sigma d\theta+\int_{\s}\int_{\mathscr{S}} (\theta \cdot \mu) \vert u(t,x,\theta)\vert^2 d\sigma d\theta\\+\int_{\s}\int_{D(t')}\vert u(t', x,\theta) \vert^2 dxd\theta-\int_{\s}\int_{D(0)}\vert u(0,x,\theta) \vert^2 dxd\theta\\+\int_{\s}\int_V 2 a(t,x)\vert u(t,x,\theta)\vert^2 dx dtd\theta-\int_{\s}\int_V 2\G_\kappa[u](t,x,\theta)u(t,x,\theta) dxdt d\theta.
\end{multline*}
By using  (\ref{44}), we can see that
\begin{align*}
\int_{\s}\int_{\mathscr{S}} \vert u(t,x,\theta) \vert^2 \eta d\sigma d\theta+\int_{\s}\int_{\mathscr{S}} (\theta\cdot \mu) \vert u(t,x,\theta)\vert^2    d\sigma d\theta\geq \int_{\s}\int_{\mathscr{S}}\vert u(t,x,\theta)\vert^2   (\eta-\vert\mu\vert) d\sigma d\theta = 0.
\end{align*}
Then, since $u(0,x,\theta)=0,$ we get
\begin{multline*}
\int_{\s}\int_{D(t')}\vert u(t',x,\theta) \vert^2 dxd\theta \leq \int_{\s}\int_V 2P u(t,x,\theta)u(t,x,\theta) dx dtd\theta\cr-\int_{\s}\int_V 2\vert a(t,x)\vert\vert u(t,x,\theta)\vert^2 dx dtd\theta+\int_{\s}\int_V 2\vert \G_k[u](t,x,\theta)\vert \vert u(t,x,\theta)\vert dxdt d\theta.
\end{multline*}
On the other hand, by Cauchy-Schwartz inequality and Fubini's theorem, we can see that
\begin{equation}\label{42}
\int_{\s}\int_V 2\G_k[u](t,x,\theta)u(t,x,\theta) dxdt d\theta
\leq C \int_{\s}\int_V \vert u(t, x,\theta) \vert^2 dx dt d\theta,
\end{equation}
where, the positive constant $C$ is depending on $M_1$ and $M_2$. We can also see that
\begin{equation}\label{422}
\int_{\s}\int_V 2 \vert a(t,x)\vert \vert u(t,x,\theta)\vert^2 dx dtd\theta \leq 2M_0 \int_{\s}\int_V \vert u(t,x,\theta) \vert^2 dx dt d\theta.
\end{equation}
Now, using the fact that $Pu( t,x,\theta) = 0$ for any $( t,x) \in V$ and $\theta\in \s$, yet from (\ref{42}) and (\ref{422}) we get
$$\int_{\s}\int_{D(t')}\vert u(t',x,\theta) \vert^2 dxd\theta \leq  C \int_0^{t'}\int_{\s}\int_{D(t)}\vert u(t,x,\theta) \vert^2 dx d\theta dt.$$
In view of Gr\"onwall's Lemma we end up deducing that $ u(t',x,\theta) = 0$ for any $(t',x) \in D(t')$, $t' \in (0, r/2)$ and $\theta\in \s$. This completes the proof of the lemma.
\end{proof}

\subsection{Proof of Theorem \ref{TT1}}
Let $(a,k)\in \am\times\km$ such that $\mbox{Supp}(a)\subset \mathcal{C}_r$. Let\\ $f \in {\mathscr{L}^-_p(\Sigma^-)}$ and $u$ satisfy
\begin{equation*}
\left\{
  \begin{array}{ll}
\partial_t u+\theta\cdot\nabla u+a(t,x)u=\G_\kappa[u](t,\theta,x)& \mbox{in}\quad Q_T ,\\      
 u(0,x,\theta)=0& \mbox{in}\quad Q ,\\
u(t,x,\theta)=f(t,x,\theta)& \mbox{on}\quad  \Sigma^-.\\
\end{array}
\right.
\end{equation*}
Since from Lemma \ref{12344}, we have $u(t,x,\theta) = 0$ for $(t,x)\in \mathcal{C}_r$ and $\theta\in \s$ and  using the fact that\\ $\mbox{Supp}(a)\subset \mathcal{C}_r$, we deduce that $u$ solves also the following transport boundary-value problem

\begin{equation*}
\left\{
  \begin{array}{ll}
\partial_t v+\theta\cdot\nabla v=\G_\kappa[v](t,x,\theta)& \mbox{in}\quad Q_T ,\\      
 v(0,x,\theta)=0& \mbox{in}\quad Q ,\\
v(t,x,\theta)=f(t,x,\theta) & \mbox{on}\quad  \Sigma^-.\\
\end{array}
\right.
\end{equation*}
Then, we conclude that $\A_{a,k}(f) = \A_{0,k}(f)$ for all $f \in  {\mathscr{L}^-_p(\Sigma^-)}$. Then the proof of Theorem \ref{TT1} is completed. 
\section{Geometrical optics solutions for the Boltzmann equation}\label{Sec2}
\setcounter{equation}{0}
The present section is devoted to the construction of special  solutions called geometrical optics solutions (G.O-solutions) for the linear Boltzmann  equation (\ref{1}), which we will use them to  proof of our main results.
Let $\phi^\pm \in \mathcal{C}^\infty_0(\mathcal{B}_r,\mathcal{C}(\s))$.  Then, for any $\theta\in\s$, we have
\begin{equation}\label{3.2}
\textrm{Supp} \, \phi^+(\cdot,\theta)\cap\Omega=\emptyset.
\end{equation}
Since $T>2r$, we have also 
\begin{equation}
(\textrm{Supp}\, \phi^-(\cdot,\theta) \pm T\theta)\cap\Omega=\emptyset.
\end{equation}
We set
$$
\varphi^\pm (t,x,\theta)=\phi^\pm (x-t\theta,\theta),\quad (t,x,\theta)\in\R\times\Rn\times\s.
$$
Then $\varphi^\pm (t,x,\theta)$ solves the following transport equation 
\begin{equation}\label{3.5}
\para{\p_t+\theta\cdot\nabla}\varphi(t,x,\theta)=0, \quad (t,x,\theta)\in\R\times\Rn\times\s.
\end{equation}
Moreover, for $\lambda>0$, setting
\begin{equation}\label{2.5}
\varphi_{\lambda}^\pm (t,x,\theta)=\varphi^\pm (t,x,\theta)e^{\pm i\lambda(t-x\cdot\theta)},\quad (t,x,\theta)\in\R\times\Rn\times\s.
\end{equation}
Finally, for $a\in \am$,  we set
$$
b_a(t,x,\theta)=\exp\para{-\int_0^t a(s,x-(t-s)\theta)ds},\quad (t,x,\theta)\in \R\times\Rn\times\s,
$$
where we have extended $a$ by $0$ outside $\overline{\Omega}_T$. We have
\begin{eqnarray*}
\theta \cdot \nabla b_a(t,x,\theta) &=&-b_a\int_0^t\theta\cdot\nabla a(s,x-(t-s)\theta )\,ds.
\end{eqnarray*}
This implies  in particular that 
\begin{eqnarray*}
\p_t  b_a(t,x,\theta) &=&-a(t,x)b_a(t,x,\theta)-\left( \int_0^t\frac{d}{dt}a(s,x-(t-s)\theta)ds\right) b_a(t,x,\theta)\\
&=&-a(t,x)b_a(t,x,\theta)+\left( \int_0^t \theta\cdot\nabla a(s,x-(t-s)\theta)ds\right)  b_a(t,x,\theta)
.\end{eqnarray*}
Therefore $b_a$ satisfies the following transport equation
\begin{equation}\label{b}
(\p_t+\theta\cdot\nabla+a(t,x))b_a(t,x,\theta)=0, \quad (t,x,\theta)\in\R\times\Rn\times\s.
\end{equation}

\begin{lemma}\label{L2.1}
There exists $C>0$ such that for any $p\geq1$, we have
\begin{equation}\label{2.7}
\norm{\G_k[\varphi_\lambda^\pm b_a](t,\cdot,\cdot)}_{L^p(Q)}\leq C\norm{\phi^\pm}_{L^p(\Rn\times\s)},\quad \forall t\in (0,T), 
\end{equation}
for any $(a,k)\in\am\times\km .$ 
Moreover, we have 
\begin{equation}\label{3005}
\lim_{\lambda\to\infty}\norm{\G_k[\varphi_\lambda^\pm b_a]}_{L^2(Q_T)}=0.
\end{equation}
where $C$ depends only on $\Omega,$ $M_i$, $i=0,1,2$. Here we recall that $\varphi^\pm_\lambda$ are given by (\ref{2.5}) and $\G_k$ is defined by (\ref{1.2}).
\end{lemma}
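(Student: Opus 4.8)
The plan is to estimate $\G_k[\varphi_\lambda^\pm b_a]$ directly from its definition \eqref{1.2}, exploiting that the oscillatory factor $e^{\pm i\lambda(t-x\cdot\theta)}$ in $\varphi_\lambda^\pm$ does not depend on the integration variable $\theta'$, so it factors out of the $\theta'$-integral and has modulus one. Concretely, for fixed $(t,x,\theta)$ we write
$$
\G_k[\varphi_\lambda^\pm b_a](t,x,\theta)=\int_\s k(x,\theta,\theta')\,\varphi^\pm(t,x,\theta')\,b_a(t,x,\theta')\,e^{\pm i\lambda(t-x\cdot\theta')}\,d\theta',
$$
so that $|\G_k[\varphi_\lambda^\pm b_a](t,x,\theta)|\leq \int_\s |k(x,\theta,\theta')|\,|\varphi^\pm(t,x,\theta')|\,|b_a(t,x,\theta')|\,d\theta'$. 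Since $a\in\am$ implies $\Vert a\Vert_{L^\infty(\Omega_T)}\leq M_0$ and $a$ is extended by $0$, we have $|b_a(t,x,\theta')|\leq e^{M_0 T}$; moreover $|\varphi^\pm(t,x,\theta')|=|\phi^\pm(x-t\theta',\theta')|$.

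First I would prove \eqref{2.7}. Raise the pointwise bound to the $p$-th power and apply Hölder (or Jensen) in $\theta'$ against the finite measure $|k(x,\theta,\theta')|d\theta'$, whose total mass is $\leq M_1$ by the definition of $\km$: this gives
$$
|\G_k[\varphi_\lambda^\pm b_a](t,x,\theta)|^p\leq M_1^{p-1}e^{pM_0T}\int_\s |k(x,\theta,\theta')|\,|\phi^\pm(x-t\theta',\theta')|^p\,d\theta'.
$$
Integrate in $(x,\theta)$ over $Q=\s\times\Omega$, use Fubini, and integrate in $\theta$ first: $\int_\s|k(x,\theta,\theta')|\,d\theta\leq M_2$ by the second bound defining $\km$. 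One is left with $C\int_\Omega\int_\s |\phi^\pm(x-t\theta',\theta')|^p\,d\theta'\,dx$; the change of variables $x\mapsto x-t\theta'$ (a translation, Jacobian one) for each fixed $\theta'$ shows this is $\leq C\Vert\phi^\pm\Vert_{L^p(\Rn\times\s)}^p$, with $C=C(\Omega,M_1,M_2,M_0,T)$ — note the constant is uniform in $t\in(0,T)$ and in $\lambda$, as required. Raising to the power $1/p$ yields \eqref{2.7}.

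For the decay statement \eqref{3005} I would argue by the Riemann–Lebesgue lemma rather than by any crude bound, since the sup-norm estimate \eqref{2.7} alone does not give decay. Fix $(x,\theta)$ and regard $g_{t,x,\theta}(\theta'):=k(x,\theta,\theta')\varphi^\pm(t,x,\theta')b_a(t,x,\theta')$ as an $L^1(\s)$ function of $\theta'$ (it is, uniformly in $t$, by the mass bound $M_1$ and the $L^\infty$ bounds on $\phi^\pm,b_a$); the phase $\theta'\mapsto \mp\lambda\, x\cdot\theta'$ is linear and nonstationary for $x\neq 0$, so $\G_k[\varphi_\lambda^\pm b_a](t,x,\theta)=e^{\pm i\lambda t}\widehat{g_{t,x,\theta}}(\mp\lambda x)\to 0$ as $\lambda\to\infty$ pointwise in $(t,x,\theta)$ with $x\neq 0$ — and $\{x=0\}$ has measure zero in $\Omega$. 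To upgrade pointwise convergence to $L^2(Q_T)$ convergence I would invoke dominated convergence, the dominating function being the constant $C^2$ from \eqref{2.7} (with $p=2$), which is integrable over the bounded set $Q_T$. I expect the only delicate point to be justifying that the $L^1(\s)$-norm of $g_{t,x,\theta}$ is controlled uniformly enough in $(t,x,\theta)$ for the Riemann–Lebesgue conclusion and the domination to apply simultaneously; this is handled by the same two mass bounds in the definition of $\km$ together with $\phi^\pm\in\mathcal{C}^\infty_0$, so it is routine once set up carefully. (Alternatively, if $k$ were merely $L^2$ in $(\theta,\theta')$ one could first approximate $k$ by $\mathcal{C}^\infty$ kernels; with the hypotheses as stated the direct Riemann–Lebesgue argument suffices.)
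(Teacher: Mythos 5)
Your proof of the bound \eqref{2.7} is essentially the paper's: factor the $\theta'$-independent oscillation out of the $\theta'$-integral, apply H\"older against the measure $\abs{k(x,\theta,\theta')}d\theta'$ of mass $\leq M_1$, integrate in $\theta$ using the mass bound $M_2$, and translate in $x$. No comment needed there.

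For \eqref{3005} you take a genuinely different route. The paper views $\Phi^\pm_\lambda(t,x,\cdot)$ as the image of the bounded family $\theta'\mapsto e^{\mp i\lambda x\cdot\theta'}$ under the integral operator on $L^2(\s)$ with kernel $\tilde k_{t,x}\in L^2(\s\times\s)$ (here the hypothesis $k\in L^\infty(\Omega;L^2(\s\times\s))$ is used): this operator is compact, the exponentials converge weakly to zero in $L^2(\s)$, hence $\norm{\Phi^\pm_\lambda(t,x,\cdot)}_{L^2(\s)}\to 0$ for a.e.\ $(t,x)$, and dominated convergence finishes. You instead freeze all three variables $(t,x,\theta)$ and argue that the scalar oscillatory integral $\int_\s g_{t,x,\theta}(\theta')e^{\mp i\lambda x\cdot\theta'}d\theta'$ tends to zero, then dominate. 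Your scheme is more elementary (no compactness, only the $L^1(\s)$ structure of the kernel in $\theta'$ is used) and it would go through; both arguments ultimately rest on the same underlying fact, namely that $\int_\s g(\theta')e^{-i\lambda x\cdot\theta'}d\theta'\to 0$ for fixed $x\neq 0$, which the paper packages as ``weak convergence of $e^{\mp i\lambda x\cdot\theta'}$ to zero in $L^2(\s)$''.

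However, your justification of that key pointwise limit is not correct as written, on two counts. First, this is not an instance of the classical Riemann--Lebesgue lemma: the measure $g_{t,x,\theta}(\theta')\,d\sigma(\theta')$ is carried by the sphere and is therefore singular with respect to Lebesgue measure on $\Rn$, and Riemann--Lebesgue fails for general singular measures. Second, the phase $\theta'\mapsto x\cdot\theta'$ restricted to $\s$ is \emph{not} nonstationary: it has critical points at $\theta'=\pm x/\abs{x}$. The limit is nevertheless true, but for a different reason: the critical points are nondegenerate (curvature of the sphere), so stationary phase gives decay $O(\lambda^{-(n-1)/2})$ for smooth densities; since $\norm{e^{\mp i\lambda x\cdot\,\cdot}}_{L^\infty(\s)}=1$, an approximation of $g_{t,x,\theta}\in L^1(\s)$ by smooth functions in $L^1(\s)$ transfers the conclusion to your setting. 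With that repair (and the uniform domination you already supply from \eqref{2.7}), your argument for \eqref{3005} is complete.
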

\begin{proof}
We have
\begin{equation}
\G_k[\varphi_\lambda^\pm b_a](t,x,\theta)=e^{\pm i\lambda t}\Phi^\pm_\lambda(t,x,\theta),
\end{equation}
where $\Phi^\pm_\lambda$ are given by
\begin{equation}
\Phi^\pm_\lambda(t,x,\theta)=\int_\s k(x,\theta,\theta')\varphi^\pm (t,x,\theta')e^{\mp i\lambda x\cdot\theta'}b_a(t,x,\theta')d\theta'.
\end{equation}
By  H\"{o}lder's inequality,  we get
\begin{align}
\abs{\Phi^\pm_\lambda(t,x,\theta)}^p &\leq M_1^{p/{p'}}\int_\s \abs{k(x,\theta,\theta')}\abs{\varphi^\pm (t,x,\theta')b_a(t,x,\theta')}^pd\theta'\cr
&\leq C^p M_1^{p/{p'}}\int_\s \abs{k(x,\theta,\theta')}\abs{\varphi^\pm (t,x,\theta')}^p d\theta',
\end{align}
applying Fubini's Theorem, we deduce that
\begin{align}
\norm{\Phi^\pm_\lambda(t,\cdot,\cdot)}^p_{L^p(Q)} &\leq C^p M_1^{p/{p'}}M_2\int_\s\int_\Omega\abs{\varphi^\pm(t,x,\theta')}^p dx d\theta'\cr
&\leq C^p\int_\s\int_{\Rn}\abs{\phi^\pm(x,\theta')}^p dxd\theta'.
\end{align}
This complete the proof of (\ref{2.7}). Now we write $\Phi^\pm_\lambda$ as
\begin{equation}
\Phi^\pm_\lambda(t,x,\theta)=\int_\s \tilde{k}_{t,x}(\theta,\theta')e^{\mp i\lambda x\cdot\theta'}d\theta',
\end{equation}
where the kernel $\tilde{k}_{t,x}$ is given by
\begin{equation}
\tilde{k}_{t,x}(\theta,\theta')=k(x,\theta,\theta')\varphi^\pm (t,x,\theta')b_a(t,x,\theta'),
\end{equation}
since 
$$
\int_\s\int_\s\abs{\tilde{k}_{t,x}(\theta,\theta')}^2d\theta\,d\theta'<+\infty,\quad \textrm{a.e.,}\,\, (t,x)\in \Omega_T,$$
then the integral operator with kernel $\tilde{k}_{t,x}$ is compact in $L^2(\s)$. Using the fact that $\theta\longmapsto e^{\mp i\lambda x\cdot\theta}$ converge weakly to zero in $L^2(\s)$ when $\lambda\to\infty$, we get
\begin{equation}
\lim_{\lambda\to\infty}\norm{\Phi^\pm _\lambda(t,x,\cdot)}_{L^2(\s)}=0, \quad \textrm{a.e.,}\,\, (t,x)\in \Omega_T.
\end{equation}
The Lebesgue Theorem implies that
\begin{equation}\label{4.55}
\lim_{\lambda\to\infty}\norm{\Phi^\pm_\lambda}_{L^2(Q_T)}=0=\lim_{\lambda\to\infty}\norm{\G_k[\varphi_\lambda^\pm b_a]}_{L^2(Q_T)}.
\end{equation}
This completes the proof.
\end{proof}
\begin{lemma}\label{LC1}
Let $p\geq 1$ and $(a,k)\in \am\times\km$. For any $\phi^+ \in \C^\infty_0(\mathcal{B}_r, \C(\s))$ and any $\lambda>0$ the following system
\begin{equation}\label{1.11}
\left\{
\begin{array}{lll}
\partial_tu+\theta\cdot\nabla u+a(t,x)u=\G_k[u](t,x,\theta)  & \,\,(t,x,\theta)\in\,Q_T, \cr
 u(0,x,\theta)=0 &\,\, (x,\theta)\in\, Q,
\end{array}
\right.
\end{equation}
has a solution of the form
\begin{equation}\label{e4}
u_\lambda^+(t,x,\theta)=\varphi_\lambda^+(t,x,\theta)b_{a}(t,x,\theta)+\psi^+_\lambda(t,x,\theta),
\end{equation}
satisfying
$$
u_\lambda^+\in \mathcal{C}([0,T];W_p(Q))\cap \mathcal{C}^1([0,T];L^p(Q))
,$$
where  $\psi^+_\lambda$ satisfies
\begin{equation}\label{30.26}
\left\{
\begin{array}{llll}
\partial_t\psi+\theta\cdot\nabla\psi+a(t,x)\psi=\G_k[\psi]+\G_k[\varphi_\lambda^+b_a](t,x,\theta)
& \textrm{in }\,\, Q_T,\cr
\psi(0,x,\theta)=0& \textrm{in}\,\,Q,\cr
\psi(t,x,\theta)=0 & \textrm{on} \,\, \Sigma^-,
\end{array}
\right.
\end{equation}
and, there is a constant $C>0$ such that
\begin{equation}\label{4.6}
\norm{\psi^+_\lambda}_{\mathcal{C}([0,T];L^p(Q))}\leq C\norm{\phi^+}_{L^p(\Rn\times\s)}.
\end{equation}
Here $C$ is a constant depending  on $\Omega$, $T$ and $M_i$, $i=0,1,2$.\\
Moreover, we have
\begin{equation}\label{4.66}
\lim_{\lambda\to\infty}\norm{\psi^+_\lambda}_{L^2(Q_T)}=0.
\end{equation}
\end{lemma}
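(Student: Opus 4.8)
The plan is to use the decomposition \eqref{e4} as an ansatz: take the principal part to be the product $\varphi^+_\lambda b_a$ of the two transport solutions already in hand, and let $\psi^+_\lambda$ be a corrector chosen to make the sum an exact solution. First I would record the algebraic fact that $\varphi^+_\lambda b_a$ is annihilated by the free transport-plus-absorption operator. Since $|\theta|=1$, one has $(\p_t+\theta\cdot\nabla)e^{i\lambda(t-x\cdot\theta)}=i\lambda(1-|\theta|^2)e^{i\lambda(t-x\cdot\theta)}=0$, which together with \eqref{3.5} and the Leibniz rule gives $(\p_t+\theta\cdot\nabla)\varphi^+_\lambda=0$; combining this with \eqref{b} yields $(\p_t+\theta\cdot\nabla+a)(\varphi^+_\lambda b_a)=0$. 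Substituting $u=\varphi^+_\lambda b_a+\psi$ into the equation of \eqref{1.11} then leaves precisely $(\p_t+\theta\cdot\nabla+a)\psi-\G_k[\psi]=\G_k[\varphi^+_\lambda b_a]$, which is the source problem \eqref{30.26}. For the initial condition I would note that $b_a(0,\cdot,\cdot)\equiv 1$ while $\varphi^+_\lambda(0,x,\theta)=\phi^+(x,\theta)e^{-i\lambda x\cdot\theta}$ vanishes for $x\in\Omega$ by the support property \eqref{3.2}; hence imposing $\psi(0,\cdot,\cdot)=0$ on $Q$ forces $u^+_\lambda(0,\cdot,\cdot)=0$ on $Q$. (No incoming-flux condition is prescribed in \eqref{1.11}, so the choice $\psi|_{\Sigma^-}=0$ merely fixes $u^+_\lambda|_{\Sigma^-}=(\varphi^+_\lambda b_a)|_{\Sigma^-}$.)

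Next I would invoke the well-posedness results. By estimate \eqref{2.7} of Lemma \ref{L2.1}, the source $\G_k[\varphi^+_\lambda b_a]$ lies in $L^p(Q_T)$ with norm bounded by $CT^{1/p}\|\phi^+\|_{L^p(\Rn\times\s)}$, so Lemma \ref{L.1.1} gives a unique $\psi^+_\lambda\in\C^1([0,T];L^p(Q))\cap\C([0,T];D(A))$ solving \eqref{30.26}, together with $\|\psi^+_\lambda(t,\cdot,\cdot)\|_{L^p(Q)}\le C\|\G_k[\varphi^+_\lambda b_a]\|_{L^p(Q_T)}$, which is \eqref{4.6}. Setting $u^+_\lambda=\varphi^+_\lambda b_a+\psi^+_\lambda$ then produces a solution of \eqref{1.11} of the required form, and the regularity $u^+_\lambda\in\C([0,T];W_p(Q))\cap\C^1([0,T];L^p(Q))$ follows by adding the regularity of $\psi^+_\lambda$ to that of the explicit term $\varphi^+_\lambda b_a$.

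For the decay \eqref{4.66}, I would observe that $\G_k[\varphi^+_\lambda b_a]$ is bounded on $Q_T$ (hence in every $L^q(Q_T)$), so by uniqueness $\psi^+_\lambda$ coincides with the $L^2$-solution of \eqref{30.26}; applying the a priori bound \eqref{1.5} of Lemma \ref{L.1.1} with $p=2$ gives $\sup_{t\in(0,T)}\|\psi^+_\lambda(t,\cdot,\cdot)\|_{L^2(Q)}\le C\|\G_k[\varphi^+_\lambda b_a]\|_{L^2(Q_T)}$, and the right-hand side tends to $0$ as $\lambda\to\infty$ by \eqref{3005}. Integrating in $t$ then yields \eqref{4.66}.

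I expect the only genuinely delicate point to be the regularity bookkeeping for the principal term, rather than any real analytic difficulty: $b_a$ is built from $a$ extended by zero across $\p\Omega_T$, so it is merely Lipschitz in $(t,x)$ rather than $\C^1$, and one must check that $\varphi^+_\lambda b_a$ still belongs to $\C([0,T];W_p(Q))\cap\C^1([0,T];L^p(Q))$. This holds because $\theta\cdot\nabla(\varphi^+_\lambda b_a)$ and $\p_t(\varphi^+_\lambda b_a)$ exist almost everywhere and are bounded on the bounded set $\overline{Q}$, as is the boundary trace on $\Gamma$, so all the quantities defining the $W_p$- and $L^p$-norms are finite and depend continuously on $t$. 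Everything else is a direct assembly of Lemmas \ref{L2.1} and \ref{L.1.1} with the transport identities \eqref{3.5} and \eqref{b}.
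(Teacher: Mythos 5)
Your proposal is correct and follows essentially the same route as the paper: it uses the identities \eqref{3.5} and \eqref{b} to show the source term reduces to $\G_k[\varphi^+_\lambda b_a]$, then applies Lemma \ref{L.1.1} together with \eqref{2.7} for the bound \eqref{4.6} and with \eqref{3005} for the decay \eqref{4.66}. Your additional checks (the vanishing of $\varphi^+_\lambda b_a$ at $t=0$ on $\Omega$ via \eqref{3.2}, and the regularity of the principal term) are correct refinements of details the paper leaves implicit.
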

\begin{proof}
Let $\psi^+_\lambda(t,x,\theta)$ solves the following 
homogeneous boundary value problem
\begin{equation}\label{3-.26}
\left\{
\begin{array}{llll}
\partial_t\psi+\theta\cdot\nabla\psi+a(t,x)\psi=\G_k[\psi]+v_\lambda (t,x,\theta)
& \textrm{in }\,\, Q_T,\cr
\psi(0,x,\theta)=0& \textrm{in}\,\,Q,\cr
\psi(t,x,\theta)=0 & \textrm{on} \,\, \Sigma^-,
\end{array}
\right.
\end{equation}
where the source term $v_\lambda$ is given by 
\begin{equation}\label{3.27}
v_\lambda(t,x,\theta)=-\para{\partial_t+\theta\cdot\nabla+a(t,x)}\para{\varphi_\lambda^+(t,x,\theta)b_a(t,x,\theta)}+\G_k[\varphi_\lambda^+b_a](t,x,\theta).
\end{equation}
To prove the Lemma it would be enough to show that $\psi_\lambda^+$ satisfies the estimates (\ref{4.6}) and (\ref{4.66}).\\
By a simple computation and using (\ref{3.5}) and (\ref{b}), we have
\begin{align}\label{3.28}
v_\lambda(t,x,\theta)&=\G_k[\varphi_\lambda^+b_a](t,x,\theta).
\end{align}
Furthermore, thanks to Lemma \ref{L.1.1} there is a constant $C>0$, such that
\begin{equation}\label{2018}
\norm{\psi^+_\lambda(t,\cdot,\cdot)}_{L^p(Q)}\leq C\norm{v_\lambda}_{L^p(Q_T)},\quad \forall\,t\in(0,T).
\end{equation}
From \eqref{2.7}, (\ref{3005}) and (\ref{2018}) we obtain (\ref{4.6}) and  (\ref{4.66}). This completes the proof  of the Lemma.
\end{proof}
We prove by proceeding as in the proof of Lemma \ref{LC1}, the following Lemma.

\begin{lemma}\label{LC2}
Let $q\geq 1$ and $(a,k)\in \F$. For any $\phi^- \in \C_0^\infty(\mathcal{B}_r;\C(\s)) $  and any $\lambda>0$ the following system 
\begin{equation}\label{1.12}
\left\{
\begin{array}{lll}
\partial_tu+\theta\cdot\nabla u-a(t,x)u=-\G_k^*[u](t,x,\theta)  &\,\,(t,x,\theta)\in\,Q_T, \cr
u(T,x,\theta)=0 & \,\,(x,\theta)\in\,Q,
 \end{array}
 \right.
\end{equation}
where
$$ \G_k^*[u](t,x,\theta)=\int_{\s} k(x,\theta',\theta)u(t,x,\theta')d\theta',$$
has $a$ solution of the form
$$
u_\lambda^- (t,x,\theta)=\varphi_\lambda^-(t,x,\theta)b_{-a}(t,x,\theta)+\psi^-_\lambda(t,x,\theta),
$$
satisfying
$$
u_\lambda^-\in \mathcal{C}([0,T];W_q(Q)) \cap \mathcal{C}^1([0,T];L^q(Q))
,$$
where  $\psi^-_\lambda$ satisfies
\begin{equation}\label{3+.26}
\left\{
\begin{array}{llll}
\partial_t\psi+\theta\cdot\nabla\psi-a(t,x)\psi=-\G_k^*[\psi]-\G_k^*[\varphi_\lambda^-b_a](t,x,\theta)
& \textrm{in }\,\, Q_T,\cr
\psi(T,x,\theta)=0& \textrm{in}\,\,Q,\cr
\psi(t,x,\theta)=0 & \textrm{on} \,\, \Sigma^+,
\end{array}
\right.
\end{equation}
and, there is a constant $C>0$ such that
\begin{equation}\label{4.7}
\norm{\psi^-_\lambda}_{\mathcal{C}([0,T];L^{q}(Q))}\leq C\norm{\phi^-}_{L^q(\R^n\times\s)}.
\end{equation}
Here $C$ is a constant depending only on $\Omega$, $T$ and $M_i$, $i=0,1,2$.\\
Moreover, we have
\begin{equation}\label{4.77}
\lim_{\lambda\to\infty}\norm{\psi^-_\lambda}_{L^2(Q_T)}=0.
\end{equation}
\end{lemma}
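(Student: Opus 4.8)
The plan is to prove Lemma~\ref{LC2} by mimicking the argument of Lemma~\ref{LC1}, working with the backward adjoint problem instead of the forward one. First I would define
$$
\psi^-_\lambda(t,x,\theta)=u^-_\lambda(t,x,\theta)-\varphi^-_\lambda(t,x,\theta)b_{-a}(t,x,\theta),
$$
and check that if $u^-_\lambda$ solves \eqref{1.12} then $\psi^-_\lambda$ solves the backward boundary value problem \eqref{3+.26} with a source term
$$
v^-_\lambda=-\para{\p_t+\theta\cdot\nabla-a(t,x)}\para{\varphi^-_\lambda b_{-a}}-\G^*_k[\varphi^-_\lambda b_{-a}].
$$
Here the key algebraic fact is that $\varphi^-_\lambda b_{-a}$ exactly solves the unperturbed backward transport equation $\para{\p_t+\theta\cdot\nabla-a}\para{\varphi^-_\lambda b_{-a}}=0$: indeed $\varphi^-$ satisfies \eqref{3.5}, the exponential factor $e^{-i\lambda(t-x\cdot\theta)}$ is annihilated by $\p_t+\theta\cdot\nabla$, and $b_{-a}$ satisfies the analogue of \eqref{b} with $a$ replaced by $-a$, namely $\para{\p_t+\theta\cdot\nabla-a}b_{-a}=0$. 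Consequently the first term in $v^-_\lambda$ drops out and $v^-_\lambda=-\G^*_k[\varphi^-_\lambda b_{-a}]$, exactly parallel to \eqref{3.28}.

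Next I would invoke an existence/stability result for the backward adjoint problem, which is the time-reversed and adjoint-scattering version of Lemma~\ref{L.1.1}: since $t\mapsto T-t$ turns \eqref{3+.26} into a forward problem of the type \eqref{1.4} with $a$ replaced by $-a$ and $k$ replaced by its transpose (which still lies in $\km$ after swapping $M_1\leftrightarrow M_2$), Lemma~\ref{L.1.1} applies and yields a unique solution $\psi^-_\lambda\in\mathcal{C}^1([0,T];L^q(Q))\cap\mathcal{C}([0,T];D(A))$ together with the bound
$$
\norm{\psi^-_\lambda(t,\cdot,\cdot)}_{L^q(Q)}\leq C\norm{v^-_\lambda}_{L^q(Q_T)},\quad\forall\,t\in(0,T).
$$
Then I need the analogue of Lemma~\ref{L2.1} for the adjoint scattering operator $\G^*_k$: the same H\"older/Fubini computation gives $\norm{\G^*_k[\varphi^-_\lambda b_{-a}](t,\cdot,\cdot)}_{L^q(Q)}\leq C\norm{\phi^-}_{L^q(\Rn\times\s)}$ uniformly in $\lambda$, which combined with the stability bound produces \eqref{4.7}. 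For \eqref{4.77} I would again follow Lemma~\ref{L2.1}: write $\G^*_k[\varphi^-_\lambda b_{-a}]=\int_\s \tilde k^*_{t,x}(\theta,\theta')e^{-i\lambda x\cdot\theta'}\,d\theta'$ with an $L^2(\s\times\s)$ kernel, use compactness of the associated integral operator on $L^2(\s)$ together with the weak convergence to zero of $\theta\mapsto e^{-i\lambda x\cdot\theta}$, and conclude by dominated convergence that $\norm{\G^*_k[\varphi^-_\lambda b_{-a}]}_{L^2(Q_T)}\to0$, hence $\norm{\psi^-_\lambda}_{L^2(Q_T)}\to0$.

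The only genuinely new point compared to Lemma~\ref{LC1} is bookkeeping: one must be careful that the sign flips ($a\to-a$, $\G_k\to-\G^*_k$) and the time reversal ($u(T,\cdot,\cdot)=0$ instead of $u(0,\cdot,\cdot)=0$, and the trace condition now on $\Sigma^+$ instead of $\Sigma^-$) are handled consistently, and that $b_{-a}$ — not $b_a$ — is the correct amplitude so that the transport equation with the $-a$ term is solved exactly. The regularity and uniform estimates for $b_{-a}$ are identical to those derived for $b_a$ in Section~\ref{Sec2} since they only use $a\in\am$. I expect no serious obstacle here; the main thing to verify carefully is the adjoint version of the well-posedness lemma, i.e.\ that transposing the kernel $k$ keeps it in the admissible class $\km$ (with $M_1$ and $M_2$ interchanged) so that the constant $C$ in \eqref{4.7} still depends only on $\Omega$, $T$ and the $M_i$.
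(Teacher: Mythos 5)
Your proposal is correct and follows exactly the route the paper intends: the paper gives no separate proof of Lemma~\ref{LC2}, stating only that one proceeds as in Lemma~\ref{LC1}, and your reduction (checking that $\varphi^-_\lambda b_{-a}$ annihilates the backward transport operator, so the source collapses to $-\G^*_k[\varphi^-_\lambda b_{-a}]$, then applying the time-reversed/adjoint versions of Lemmas~\ref{L.1.1} and~\ref{L2.1}) is precisely that argument spelled out. Your identification of $b_{-a}$ (rather than the $b_a$ appearing in the displayed source term of \eqref{3+.26}, which is evidently a typo) as the correct amplitude is the right call, and the remark about the transposed kernel staying in $\km$ with $M_1\leftrightarrow M_2$ swapped is the only bookkeeping point that needed checking.
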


\section{Recovering of absorption coefficients}

\setcounter{equation}{0}

In this section we prove a log-type stability estimate for $a$ appearing in the initial boundary
value problem (\ref{1}) with $u_0 = 0$. This is by means of the geometric optics solutions
introduced in section 2 and the light-ray transform. We start by considering geometric optics solutions of the form (\ref{e4}). We only assume that  Supp
$\phi^\pm (\cdot,\theta)\subset \mathcal{B}_r$, in such a way we have
$$
\mbox{Supp}(\phi^+)(\cdot,\theta)\cap \Omega=\emptyset\,\quad\mbox{and}\,\,\,(\mbox{Supp}(\phi^-)(\cdot,\theta) \pm T\theta)\cap \Omega=\emptyset,\quad \forall\,\theta\in \s.
$$


\subsection{Stable determination of absorption coefficient from boundary measurements}

The present section is devoted to the proof of Theorem \ref{20}. Our goal here is to show that the time dependent coefficient $a$ depends stably on the albedo operator  $\A_{a,k}$. In the rest of this section, we extend $a$ in $\R^{n+1}$ by $a =a_2-a_1$ in $\overline{\Omega}_T$ and $a =0$ on $\R^{n+1}\diagdown \overline{\Omega}_T$. We start by collecting a preliminary estimate which is needed to prove the main statement of this section.
\subsubsection{Preliminary estimate}
The main purpose of this section is to give a preliminary estimate, which relates the difference of the absorption coefficients to albedo operator. Let $(a_j,k_j)\in\mathcal{A}^*(M_0)\times\km$, $j=1,2,$ . We set
$$
a=a_{2}-a_{1},\,\mbox{and}\,\,b_a(t,x,\theta)=b_{a_2}b_{-a_1}(t,x,\theta)=\exp\Big(-\int_{0}^{t}a(s,x-(t-s)\theta)\,ds\Big),
$$
where we have extend $a$ by $0$ outside $\overline{\Omega}_T$.\\
Here, we recall the definition of $b_{-a_1}$ and $b_{a_2}$
$$b_{-a_1}(x,t)=\exp\Big( \int_{0}^{t}a_{1}(s,x-(t-s)\theta)\,ds \Big),\quad 
b_{a_2}(t,x,\theta)=\exp\Big(-\int_{0}^{t}a_{2}(s,x-(t-s)\theta)\,ds\Big).$$
We also need to define the Poisson kernel of $B(0,1)\subset \R^n$, i.e.,
$$
P(\omega,\theta)=\frac{1-\abs{\omega}^2}{\alpha_n\abs{\omega-\theta}^n},\quad \omega,\, \theta\in\s
.$$
For $0<h<1$, we define $\varrho_h :\s\times \s\to \R$ as
\begin{align}\label{4.16}
\varrho_h(\omega,\theta)=P(h\omega,\theta).
\end{align}
We have the following Lemma (see Theorem 2.46 of \cite{[F]}).
\begin{lemma}\label{L.4.3}
Let $\varrho_h$ given by (\ref{4.16}), $h \in (0,1)$. Then we have the following properties:
\begin{equation}\label{4.17}
\displaystyle 0\leq\varrho_h(\omega,\theta)\leq \frac{2}{\alpha_n(1-h)^{n-1}},\quad \forall\, h\in (0,1),\,\forall\,\theta,\,\omega\in \s,
\end{equation}  
for some positive constant $\alpha_n$.
\begin{equation}\label{4.18}
\displaystyle \int_{\s}\varrho_h(\omega,\theta) d\theta=1,\quad \forall h\in (0,1),\,\forall\,\omega \in \s.
\end{equation} 
For any  $g \in \C(\s),$ we have
\begin{equation}\label{4.19}
 \displaystyle\lim_{h\to 1} \int_{\s}\varrho_h(\omega,\theta)g(\theta)d\theta=g(\omega),\,\forall\,\omega \in \s,
 \end{equation}
where the limit is taken in the topology of $L^p(\s)$, $p\in [1,+\infty[$ and uniformly on   $\s$.
\end{lemma}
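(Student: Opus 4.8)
The plan is to reduce all three items to elementary estimates on the explicit kernel together with the classical theory of the Poisson integral on the unit ball; since the statement is quoted from \cite{[F]}, the point is only to indicate briefly why each property holds. Throughout I would use that $\varrho_h(\omega,\theta)=P(h\omega,\theta)=\dfrac{1-h^2}{\alpha_n\abs{h\omega-\theta}^n}$, using $\abs{\omega}=1$.

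For the pointwise bound \eqref{4.17}, since $\abs{\theta}=1$ the reverse triangle inequality gives $\abs{h\omega-\theta}\geq 1-h$, hence $\abs{h\omega-\theta}^n\geq(1-h)^n$; combining this with $1-h^2=(1-h)(1+h)\leq 2(1-h)$ yields $0\leq\varrho_h(\omega,\theta)\leq\dfrac{2}{\alpha_n(1-h)^{n-1}}$, which is \eqref{4.17} (nonnegativity being clear since $h<1$). For \eqref{4.18}, I would invoke the fact that the Poisson integral of the constant function $1$ solves the Dirichlet problem on $B(0,1)$ with boundary data $1$; by uniqueness of the bounded harmonic extension it equals $1$ identically, i.e. $\int_\s P(x,\theta)\,d\theta=1$ for every $x\in B(0,1)$, and specializing $x=h\omega$ gives \eqref{4.18}. (Alternatively one computes the integral directly using rotational invariance, or simply quotes it from \cite{[F]}.)

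For \eqref{4.19}, I would show that $\set{\varrho_h(\omega,\cdot)}_{h\in(0,1)}$ is an approximate identity on $\s$ concentrating at $\omega$ as $h\to1$. Fix $g\in\C(\s)$ and $\varepsilon>0$; uniform continuity of $g$ gives $\delta>0$ with $\abs{g(\theta)-g(\omega)}<\varepsilon$ whenever $\abs{\theta-\omega}<\delta$. Using \eqref{4.18} to write
$$
\int_\s\varrho_h(\omega,\theta)g(\theta)\,d\theta-g(\omega)=\int_\s\varrho_h(\omega,\theta)\bigl(g(\theta)-g(\omega)\bigr)\,d\theta,
$$
I split the integral over $\set{\abs{\theta-\omega}<\delta}$ and over $\set{\abs{\theta-\omega}\geq\delta}$. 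On the first set \eqref{4.17}--\eqref{4.18} bound the contribution by $\varepsilon$; on the second set the estimate $\abs{h\omega-\theta}\geq\abs{\omega-\theta}-(1-h)\geq\delta/2$ (valid once $1-h\leq\delta/2$) gives $\varrho_h(\omega,\theta)\leq\dfrac{2(1-h)}{\alpha_n(\delta/2)^n}$, so that contribution is at most $C_\delta(1-h)\norm{g}_{L^\infty(\s)}$, which tends to $0$ uniformly in $\omega$. This proves convergence uniformly on $\s$, and the $L^p(\s)$ convergence then follows since $\s$ has finite measure.

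The only mildly delicate point is this far-field estimate: one must check the lower bound on $\abs{h\omega-\theta}$ is uniform in $\omega\in\s$ on the region $\abs{\theta-\omega}\geq\delta$, but that is immediate from $\abs{h\omega-\theta}\geq\abs{\omega-\theta}-(1-h)\abs{\omega}$. Everything else is routine, so I expect no genuine obstacle here; the lemma is essentially a restatement of the standard Poisson-kernel facts, packaged so that later the family $\varrho_h$ can be used as an approximate identity in the angular variable.
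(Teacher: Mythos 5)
Your proof is correct. The paper itself gives no argument for this lemma---it simply cites Theorem 2.46 of \cite{[F]}---and your computation (the reverse triangle inequality $\abs{h\omega-\theta}\geq 1-h$ for \eqref{4.17}, the normalization of the Poisson integral for \eqref{4.18}, and the standard near/far splitting of an approximate identity for \eqref{4.19}, with the far-field bound uniform in $\omega$) is exactly the standard proof that the citation points to.
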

The principal estimate of this section can be stated as follows
\begin{lemma}\label{Lemma3.1}
Let $(a_{j},k_j)\in\mathcal{A}^*(M_0)\times\km$, $j=1,\,2$. There exists $C>0$ such that for any  $\phi^\pm\in \C^\infty_0(\mathcal{B}_r,\C(\s)) $, the following estimate holds true
\begin{multline}\label{ll}
\Big|\int_{\R^n\times\s}(\phi^+\phi^-)(y,\theta)
\Big[ \exp\Big( -\int_{0}^{T}a(s,y+s\theta)\,ds \Big)-1 \Big]\,d\theta\, dy  \Big|\cr
\leq C\|\A_{a_{2},k_{2}}-\A_{a_{1},k_{1}}\|_1\|\phi^+\|_{L^{1}(\R^n\times\s)}\|\phi^-\|_{L^{\infty}(\R^n\times\s)}.
\end{multline}
 Here $C$ depends only on
$\Omega$, $T$ and $M_i$, $i=0,1,2$.
\end{lemma}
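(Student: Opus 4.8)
The plan is to test the difference of the two albedo operators against the geometric optics solutions constructed in Section~\ref{Sec2}, using an integration-by-parts (Green's type) identity to extract the light-ray integral of $a=a_2-a_1$. Concretely, let $u_1=u_{1,\lambda}^+$ be the solution of \eqref{1.11} associated with $(a_1,k_1)$ of the form \eqref{e4}, with phase $\phi^+$, and let $u_2$ be the solution of \eqref{1} associated with $(a_2,k_2)$ with the same incoming data $f=u_{1,\lambda|\Sigma^-}^+$. Set $w=u_2-u_1$; then $w$ solves a transport equation with zero initial data, zero incoming data, and source term $-(a)u_1+\G_{k_2-k_1}[u_1]+\G_{k_2}[w]$, and its outgoing trace is exactly $(\A_{a_2,k_2}-\A_{a_1,k_1})f$ on $\Sigma^+$. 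I would then pair this with the backward geometric optics solution $u_{2,\lambda}^-$ from Lemma~\ref{LC2} (built on $\phi^-$, for the coefficients $(a_2,k_2)$, solving the adjoint equation \eqref{1.12}), integrate $w$ against it over $Q_T$, and move all derivatives onto $u_{2,\lambda}^-$ via the divergence theorem. The boundary terms reduce to $\int_{\Sigma^+}(\A_{a_2,k_2}-\A_{a_1,k_1})(f)\,\overline{u_{2,\lambda}^-}\,d\xi\,dt$ because the initial/final and $\Sigma^-$ contributions vanish, and the $\G_{k_2}$ terms cancel against the $\G_{k_2}^*$ terms by the definition of the adjoint kernel.

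Next I would substitute the explicit ansätze $u_{1,\lambda}^+=\varphi_\lambda^+ b_{a_1}+\psi_\lambda^+$ and $u_{2,\lambda}^-=\varphi_\lambda^- b_{-a_2}+\psi_\lambda^-$ into the resulting identity. The leading term is
\[
\int_{Q_T} a(t,x)\,\varphi_\lambda^+(t,x,\theta)\,b_{a_1}(t,x,\theta)\,\varphi_\lambda^-(t,x,\theta)\,b_{-a_2}(t,x,\theta)\,dx\,dt\,d\theta,
\]
and since $\varphi_\lambda^+\varphi_\lambda^-=\varphi^+\varphi^-\,e^{i\lambda(t-x\cdot\theta)}e^{-i\lambda(t-x\cdot\theta)}=\varphi^+\varphi^-$, the oscillations cancel exactly, leaving $\int a\,\varphi^+\varphi^-\,b_{a_1}b_{-a_2}$. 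Using $b_{a_1}b_{-a_2}=\exp(\int_0^t a(s,x-(t-s)\theta)\,ds)$ and the change of variables $x-t\theta=y$ (so the time integral of $a$ along the ray telescopes), one computes that after integrating in $t$ this produces precisely
\[
\int_{\R^n\times\s}(\phi^+\phi^-)(y,\theta)\Big[1-\exp\Big(-\int_0^T a(s,y+s\theta)\,ds\Big)\Big]d\theta\,dy
\]
up to sign, which is the left-hand side of \eqref{ll}. I would carry out this computation carefully, using the transport equations \eqref{3.5}, \eqref{b} satisfied by $\varphi^\pm$ and $b_a$, and the supports of $\phi^\pm$ (which guarantee that along each ray $a$ is seen on the full interval $[0,T]$, so the endpoint terms at $t=0$ and $t=T$ come out clean).

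The remaining (error) terms all involve at least one factor $\psi_\lambda^+$ or $\psi_\lambda^-$ or $\G_{k}[\varphi_\lambda^\pm b_a]$, and I would bound them using the estimates \eqref{4.6}, \eqref{4.7} in $\mathcal{C}([0,T];L^p)$ together with the vanishing limits \eqref{4.66}, \eqref{4.77}, \eqref{3005}: pairing in the appropriate $L^p$--$L^{p'}$ duality (taking $p=1$ on the $\phi^+$ side and $p=\infty$ on the $\phi^-$ side, with $q$ conjugate) and letting $\lambda\to\infty$, every error term tends to zero, while the boundary term is controlled by $\|\A_{a_2,k_2}-\A_{a_1,k_1}\|_1\|\varphi_\lambda^+ b_{a_1}\|_{\mathscr{L}_1^-}\|u_{2,\lambda}^-\|_{\mathscr{L}_\infty^+}\lesssim \|\A_{a_2,k_2}-\A_{a_1,k_1}\|_1\|\phi^+\|_{L^1}\|\phi^-\|_{L^\infty}$ uniformly in $\lambda$ by Lemma~\ref{Th0} and Lemma~\ref{L2.1}. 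Passing to the limit gives \eqref{ll}. The main obstacle I anticipate is the bookkeeping in the Green's identity: making sure all the interior $\G_{k_2}/\G_{k_2}^*$ pairings genuinely cancel (this is where the precise definition of $\G_k^*$ in Lemma~\ref{LC2} is used), that the $\Sigma^-$ and time-slice boundary contributions vanish for the chosen data, and that the $\lambda$-dependence of the error terms is uniform enough to pass to the limit — the $L^2(Q_T)$ smallness of $\psi_\lambda^\pm$ must be combined with the uniform $L^p$ bounds via interpolation or Cauchy--Schwarz against the bounded factors, since we cannot use $L^2$ smallness directly in the $L^1$--$L^\infty$ pairing.
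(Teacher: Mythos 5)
Your proposal follows essentially the same route as the paper's proof: build a forward geometric optics solution for one pair of coefficients, feed its incoming trace to the equation with the other pair, subtract, pair the difference with a backward adjoint geometric optics solution so that the scattering terms cancel, let the exponential amplitudes $b_{a_1}b_{-a_2}$ telescope under the change of variables $y=x-t\theta$ to produce the bracketed exponential, kill the remainder terms as $\lambda\to\infty$ using \eqref{4.66}, \eqref{4.77}, \eqref{3005}, and control the boundary term by the operator norm in the $L^1$--$L^\infty$ pairing (the paper makes the last step precise via the $p\to1$ limit and Riesz--Thorin interpolation, which your sketch correctly anticipates). The only deviation is that you swap the roles of $(a_1,k_1)$ and $(a_2,k_2)$ (forward solution for the first pair, backward adjoint for the second), which yields $1-\exp\big(+\int_0^T a\big)$ in place of $\exp\big(-\int_0^T a\big)-1$; this is an immaterial relabeling since the lemma is only used downstream to bound $\big|\int_0^T a(s,y+s\theta)\,ds\big|$.
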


\begin{proof}
In view of Lemma \ref{LC1}, and using the fact that Supp
$\phi^+(\cdot,\theta)\cap\Omega=\emptyset$, there exists a G.O-solution
$u_\lambda^{+}$ to the equation
$$
\left\{
    \begin{array}{ll}
      \p_{t}u+\theta\cdot\nabla u+a_{2}(t,x)u=\G_{k_2}[u] (t,x,\theta) & \,\,\,(t,x,\theta)\in Q_T, \cr
      u(0,x,\theta)=0 & \,\,\, (t,x)\in Q,
    \end{array}
  \right.$$
in the following form
\begin{equation*}\label{EQ3*.19}
u_\lambda^{+}(t,x,\theta)=\varphi_\lambda^+(t,x,\theta)b_{a_2}(t,x,\theta)+\psi_{\lambda}^{+}(t,x,\theta),
\end{equation*}
corresponding to the coefficients $a_{2}$ and $k_{2}$,  where $\psi_\lambda^{+}(t,x,\theta)$
satisfies (\ref{4.6}) and (\ref{4.66}). Next, let us denote by $f_{\lambda}$
the function
$$f_{\lambda}(t,x,\theta):= u^{+}_\lambda(t,x,\theta)_{|\Sigma^-}=\varphi_\lambda^+(t,x,\theta) b_{a_2}(t,x,\theta)_{|\Sigma^-}.$$
We denote by $u_{1}$ the solution of
$$\left\{
  \begin{array}{ll}
   \p_{t}u_{1}+\theta\cdot\nabla u_{1}+a_{1}(t,x)u_{1}=\G_{k_1}[u_1] & \,\,\,(t,x,\theta)\in Q_T, \cr
    u_{1}(0,x,\theta)=0 & \,\,\,(x,\theta)\in Q ,\cr
    u_{1}(t,x,\theta)=f_{\lambda}(t,x,\theta) &\,\,(t,x,\theta)\in\Sigma^-.
  \end{array}
\right.
$$
Putting $u=u_{1}-u_\lambda^{+}$. Then, $u$ is a solution to the following system
\begin{equation}\label{33.311}
\left\{
  \begin{array}{ll}
     \p_{t}u+\theta\cdot\nabla u+a_{1}(t,x)u=\G_{k_1}[u]+a(t,x)u^+_\lambda-\G_k[u^+_\lambda] & \,\,\,(t,x,\theta)\in Q_T ,\cr
    u(0,x,\theta)=0 & \,\,\,(x,\theta)\in Q, \cr
    u(t,x,\theta)=0 & \,\,\,(t,x,\theta)\in\Sigma^-,
  \end{array}
\right.
\end{equation}
where $a=a_{2}-a_{1}$ and $k=k_{2}-k_{1}$. On the other hand by  Lemma \ref{LC2} and the fact that (Supp $\phi_1(\cdot,\theta)\pm
 T\theta)\cap\Omega=\emptyset$, we can find of a solution $u^{-}_\lambda$ to the backward problem 
$$
\left\{
  \begin{array}{ll}
 \p_{t}u+\theta\cdot\nabla u-a_{1}(t,x)u=-\G^*_{k_1}[u](t,x,\theta) & \textrm{in}\,\,\,Q_T, \cr
    u(T,x,\theta)=0 & \textrm{in}\,\,\,\, Q,
  \end{array}
\right.
$$
corresponding to the coefficients $a_{1}$ and $k_1$, in the form
\begin{equation}\label{EQ3.21}
u^{-}_\lambda(t,x,\theta)=\varphi_\lambda^-(t,x,\theta)b_{-a_1}(t,x,\theta)+\psi_{\lambda}^{-}(t,x,\theta),
\end{equation}
here the remainder term  $\psi_{\lambda}^{-}(t,x,\theta)$
satisfies the estimate (\ref{4.7}) and (\ref{4.77}). Multiplying the first equation of (\ref{33.311}) by $u^{-}_\lambda$, integrating by parts and using Green's formula, we obtain
\begin{multline}\label{EQ3.222}
\int_{Q_T} a(t,x) u^{+}_\lambda\,u^{-}_\lambda\,d\theta\,dx\,dt-\int_{Q_T}\G_k[u^+_\lambda](t,x,\theta)u^{-}_\lambda (t,x,\theta) d\theta\,\,dx\,dt\cr
=\int_{\Sigma^+}\theta\cdot\nu(x) (\A_{a_{2},k_{2}}-\A_{a_{1},k_{1}}) (f_{\lambda}) u_\lambda^{-}\, d\theta\,dx\,dt.
\end{multline}
Where we have used
$$
\int_{Q_T}\G_{k_1}[u](t,x,\theta)u_\lambda^-(t,x,\theta) d\theta\,dx\,dt=\int_{Q_T} u(t,x,\theta)\G^*_{k_1}[u^-_\lambda](t,x,\theta) d\theta\,dx\,dt.
$$
By replacing $u_\lambda^{+}$ and $u_\lambda^{-}$ by their expressions, we obtain

\begin{align}
\int_{Q_T} a(t,x)u_\lambda^{+}\,u_\lambda^{-} d\theta\,dx\,dt&=\int_{Q_T} a(t,x) \varphi^+_\lambda(t,x,\theta)\varphi^-_\lambda(t,x,\theta)b_a(t,x,\theta)d\theta\,dx\,dt\cr
&+\int_{Q_T} a(t,x)\varphi_\lambda^-(t,x,\theta) b_{-a_1}(t,x,\theta)\psi_{\lambda}^{+}(t,x,\theta)d\theta\,dx\,dt\cr
&+\int_{Q_T} a(t,x)\varphi_\lambda^+(t,x,\theta)b_{a_2}(t,x,\theta)\psi_{\lambda}^{-}(t,x,\theta)d\theta\,dx\,dt\cr
&+\int_{Q_T} a(t,x)\psi_{\lambda}^{-}(t,x,\theta)\psi_{\lambda}^{+}(t,x,\theta)d\theta\,dx\,dt\cr
&=\int_{Q_T} a(t,x) \varphi^+_\lambda(t,x,\theta)\varphi^-_\lambda(t,x,\theta)b_a(t,x,\theta)d\theta\,dx\,dt+\mathcal{I}_{1,\lambda}
,\end{align}
where $\mathcal{I}_{1,\lambda}$ represents the sum of the integrals that contain terms with $\psi_{\lambda}^{+}$ and 	$\psi_{\lambda}^{-}$.\\
Note that there exists $C>0$ such that for any $\lambda\geq 0$, we have
\begin{equation}
|\mathcal{I}_{1,\lambda}|\leq C\left( \norm{\psi_\lambda^+}_{L^2(Q_T)}+\norm{\psi_\lambda^-}_{L^2(Q_T)}\right) .
\end{equation}
Then by \eqref{4.66} and  \eqref{4.77}, we get 
\begin{equation}\label{i1}
\lim_{\lambda\to \infty}\mathcal{I}_{1,\lambda}=0.
\end{equation}
Moreover, we have
\begin{align*}
\int_{Q_T} \G_k[u^{+}_\lambda] u^{-}_\lambda d\theta\,dx\,dt&=\int_{Q_T} \G_k[\varphi_\lambda^+b_{a_2}](t,x,\theta) \varphi_\lambda^-(t,x,\theta)b_{-a_1}(t,x,\theta)d\theta\,dx\,dt\cr
&+\int_{Q_T} \G_k[\psi_{\lambda}^{+}](t,x,\theta)\varphi_\lambda^-(t,x,\theta) b_{-a_1}(t,x,\theta)d\theta\,dx\,dt\cr
&+\int_{Q_T} \G_k[\varphi_\lambda^+b_{a_2}](t,x,\theta)\psi_{\lambda}^{-}(t,x,\theta)d\theta\,dx\,dt\cr
&+\int_{Q_T} \G_k[\psi_{\lambda}^{+}](t,x,\theta)\psi_{\lambda}^{-}(t,x,\theta)d\theta\,dx\,dt\cr
&=\int_{Q_T} \G_k[\varphi_\lambda^+b_{a_2}](t,x,\theta) \varphi_\lambda^-(t,x,\theta)b_{-a_1}(t,x,\theta)d\theta\,dx\,dt+\mathcal{I}_{2,\lambda},
\end{align*}
where 
\begin{multline*}
\mathcal{I}_{2,\lambda}=\int_{Q_T} \G_k[\psi_{\lambda}^{+}](t,x,\theta)\varphi_\lambda^-(t,x,\theta) b_{-a_1}(t,x,\theta)d\theta\,dx\,dt\cr
+\int_{Q_T} \G_k[\varphi_\lambda^+b_{a_2}](t,x,\theta)\psi_{\lambda}^{-}(t,x,\theta)d\theta\,dx\,dt
+\int_{Q_T} \G_k[\psi_{\lambda}^{+}](t,x,\theta)\psi_{\lambda}^{-}(t,x,\theta)d\theta\,dx\,dt.
\end{multline*}
By \eqref{2.7}, we get
\begin{equation}\label{ii2}
|\mathcal{I}_{2,\lambda}|\leq C\left( \norm{\psi_\lambda^+}_{L^2(Q_T)}+\norm{\psi_\lambda^-}_{L^2(Q_T)}\right) .
\end{equation}
Then by \eqref{4.66} and  \eqref{4.77}, we have 
\begin{equation}\label{i2}
\lim_{\lambda\to \infty}\mathcal{I}_{2,\lambda}=0,
\end{equation}  
and by the Cauchy-Schwartz inequality, we obtain 

%

\begin{equation}
\abs{\int_{Q_T} \G_k[\varphi_\lambda^+b_{a_2}](t,x,\theta) \varphi_\lambda^-(t,x,\theta)b_{-a_1}(t,x,\theta)d\theta\,dx\,dt}\leq C\norm{ \G_k[\varphi_\lambda^+b_{a_2}]}_{L^2(Q_T)}.
\end{equation}
Then by \eqref{3005}, we have
\begin{equation}\label{i3}
\lim_{\lambda\to \infty}\int_{Q_T} \G_k[\varphi_\lambda^+b_{a_2}](t,x,\theta) \varphi_\lambda^-(t,x,\theta)b_{-a_1}(t,x,\theta)d\theta\,dx\,dt=0.
\end{equation} 
In light of (\ref{EQ3.222}), we have
\begin{multline}\label{EQ3.23}
\int_{Q_T} a(t,x) \varphi^+(t,x,\theta)\varphi^-(t,x,\theta)b_a(t,x,\theta)d\theta\,dx\,dt\\=\int_{\Sigma^+}\theta\cdot\nu(x) (\A_{a_{2},k_{2}}-\A_{a_{1},k_{1}}) (f_{\lambda}) u_\lambda^{-}\, d\theta\,dx\,dt-\mathcal{I}_{\lambda},
\end{multline}
where the remainder term $\mathcal{I}_{\lambda}$ is given by
\begin{equation*}
\mathcal{I}_{\lambda}=\mathcal{I}_{1,\lambda}-\mathcal{I}_{2,\lambda}-\int_{Q_T} \G_k[\varphi_\lambda^+b_{a_2}](t,x,\theta) \varphi_\lambda^-(t,x,\theta)b_{-a_1}(t,x,\theta)d\theta\,dx\,dt.
\end{equation*}
Taking the limit as $\lambda\to +\infty$, we get from (\ref{i1}), (\ref{i2}) and (\ref{i3}) 
\begin{equation}\label{EQ3.24}
\lim_{\lambda\to\infty}\mathcal{I}_{\lambda}=0.
\end{equation}
Furthermore on $\Sigma^-$, we have $u_\lambda^{+}=f_{\lambda}$, then we get the following estimate
\begin{equation}\label{EQ3.25}
\Big|\int_{\Sigma^+}(\A_{a_{2},k_{2}}-\A_{a_{1},k_{1}})(f_{\lambda})\,u^{-}_\lambda\,dx\,d\theta\, dt\Big|\leq C
\|\A_{a_{2},k_{2}}-\A_{a_{1},k_{1}}\|_p\,\|f_\lambda\|_{{\mathscr{L}^-_p(\Sigma^-)}}\|u_\lambda^-\|_{\mathscr{L}^+_q(\Sigma^+)}.
\end{equation}
Using the fact that
\begin{equation}\label{inq1}
\|u_\lambda^-\|_{\mathscr{L}^+_q(\Sigma^+)}\leq C \|\phi^-\|_{L^{\infty}(\R^n\times\s)},\quad \lim_{p\to 1}\|f_\lambda\|_{\mathscr{L}^-_p(\Sigma^-)}=\|f_\lambda\|_{\mathscr{L}^-_1(\Sigma^-)}\leq C \|\phi^+\|_{L^{1}(\R^n\times\s)},
\end{equation}
and note that, since $\mathscr{A}_{a_j,k_j}$, $j=1,2$ are continuous from $\mathscr{L}_{p'}^-(\Sigma^-)$ to $\mathscr{L}_{p'}^+(\Sigma^+)$ for any $p'\geq 1$, we get for any $p$, $1<p<2$, by Riesz-Thorin's Theorem, the following interpolation inequality 
$$
\|\A_{a_{2},k_{2}}-\A_{a_{1},k_{1}}\|_p\leq \|\A_{a_{2},k_{2}}-\A_{a_{1},k_{1}}\|_2^{1-\kappa_p}\|\A_{a_{2},k_{2}}-\A_{a_{1},k_{1}}\|_1^{\kappa_p}.
$$
with $\kappa_p=(2-p)/p$. So that
\begin{equation}\label{inq2}
\limsup_{p\to 1}\|\A_{a_{2},k_{2}}-\A_{a_{1},k_{1}}\|_p\leq \|\A_{a_{2},k_{2}}-\A_{a_{1},k_{1}}\|_1.
\end{equation}
We get, by (\ref{EQ3.23}), (\ref{EQ3.24}) and (\ref{EQ3.25}), 
$$
\Big|\int_{Q_T} a(t,x) \varphi^+\varphi^-(t,x,\theta)b_a(t,x,\theta)d\theta\,dx\,dt\Big|\leq C\|\A_{a_{2},k_{2}}-\A_{a_{1},k_{1}}\|_1 \|\phi^+\|_{L^{1}(\R^n\times\s)}\|\phi^-\|_{L^{\infty}(\R^n\times\s)}.
$$
Then, using the fact $a(t,\cdot)=0$ outside $\Omega$ and by   changing of variables $y=x-t\theta$, one gets the following estimation
\begin{multline*}
\Big|\int_{0}^{T}\int_{\R^{n}\times\s} a(t,y+t\theta)\phi^+(y,\theta)\phi^-(y,\theta)\exp\Big(-\int_{0}^{t}a(s,y+s\theta)\,ds   \Big)\,d\theta dy\,dt  \Big|\cr
\leq C\|\A_{a_{2},k_{2}}-\A_{a_{1},k_{1}}\|_1\|\phi^+\|_{L^{1}(\R^n\times\s)}\|\phi^-\|_{L^{\infty}(\R^n\times\s)}.
\end{multline*}
Bearing in mind that
\begin{multline*}
\int_{0}^{T}\int_{\R^{n}\times\s}a(t,y+t\theta)\phi^+(y,\theta)\phi^-(y,\theta) \exp\Big( -\int_{0}^{t}a(s,y+s\theta)\,ds\Big)\, d\theta\, dy\,dt\cr
=-\int_{0}^{T}\int_{\R^{n}\times\s}\phi^+(y,\theta)\phi^-(y,\theta)\frac{d}{dt}\Big[ \exp\Big( -\int_{0}^{t}a(s,y+s\theta)\,ds  \Big)
\Big]\,d\theta\, dy\,dt\cr 
=-\int_{\R^{n}\times\s }(\phi^+\phi^-)(y,\theta)\Big[\exp \Big( -\int_{0}^{T}a(s,y+s\theta)\,ds  \Big)-1\Big]\,d\theta\, dy,
\end{multline*}
we conclude the desired estimate given by
\begin{multline*}
\Big|\int_{\R^{n}\times\s}(\phi^+\phi^-)(y,\theta)\Big[ \exp\Big( -\int_{0}^{T}a(s,y+s\theta)\,ds \Big)-1 \Big]\,d\theta\, dy  \Big|\cr
\leq C\|\A_{a_{2},k_{2}}-\A_{a_{1},k_{1}}\|_1\|\phi^+\|_{L^{1}(\R^n\times\s)}\|\phi^-\|_{L^{\infty}(\R^n\times\s)}.
\end{multline*}
  This completes the proof of the lemma.
\end{proof}
\subsubsection{Stability for the light-ray transform}
The light-ray transform $\mathcal{R}$ of a function $f\in L^1(\R^{n+1})$  is defined by
$$\mathcal{R}(f)(\theta,x):=\int_{\R}f(t,x+t\theta)dt.$$
Using the above Lemma \ref{Lemma3.1}, we can estimate the light-ray transform of the difference of the two absorption coefficients
as follows:

\begin{lemma} \label{450}
Let $  (a_j,k_j)\in \mathcal{A}^*(M_0)\times\km, \; j=1,2$, then there exists $C > 0$ such that

\begin{equation}
\left|\mathcal{R}(a)(\omega,x) \right|\leq C \Vert \A_{a_1,k_1}-\A_{a_2,k_2}  \Vert_1,\quad  \forall\;\omega \in  \s,\;\mbox{a.e},\, x\in \R^n.\label{500}
\end{equation}
\end{lemma}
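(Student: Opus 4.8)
The plan is to obtain \eqref{500} directly from Lemma \ref{Lemma3.1} by a careful choice of the test functions $\phi^\pm$ together with a linearization/scaling argument that removes the exponential nonlinearity. First I would fix $\omega\in\s$ and a point $x\in\R^n$, and localize: choose $\phi^+(\cdot,\theta)$ to be an approximation of the Dirac mass at $(x,\omega)$ in the $(y,\theta)$ variables, supported in $\mathcal{B}_r$ (which is possible because $\mathcal{B}_r$ is an open annulus and any point of $\R^n$ can be translated into it after the change of variables; more precisely one works with $\phi^\pm$ concentrated near a point $y_0\in\mathcal{B}_r$ with direction $\omega$, and uses that the geodesic $s\mapsto y_0+s\omega$ sweeps across $\Omega$). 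Take $\phi^-(\cdot,\theta)$ concentrated near the same point and direction, so that the product $\phi^+\phi^-$ becomes an approximate identity. With $\|\phi^+\|_{L^1}\|\phi^-\|_{L^\infty}$ kept bounded (say normalized to $1$ on the $L^1$ side and with the $L^\infty$ bound growing at a controlled rate), Lemma \ref{Lemma3.1} gives control of the quantity
\begin{equation*}
\Big|\int_{\R^n\times\s}(\phi^+\phi^-)(y,\theta)\Big[\exp\Big(-\int_0^T a(s,y+s\theta)\,ds\Big)-1\Big]\,d\theta\,dy\Big|\leq C\|\A_{a_2,k_2}-\A_{a_1,k_1}\|_1.
\end{equation*}

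Next I would pass to the limit in the concentration parameter. As $\phi^+\phi^-$ concentrates at $(x,\omega)$ the left-hand side converges to $\big|\exp(-\int_0^T a(s,x+s\omega)\,ds)-1\big|$, while on the geodesic $s\mapsto x+s\omega$ the quantity $\int_0^T a(s,x+s\omega)\,ds$ equals $\mathcal{R}(a)(\omega,x)$, because $a$ has been extended by zero outside $\overline{\Omega}_T$ and, since $\mathrm{Supp}(\phi^+)(\cdot,\theta)\cap\Omega=\emptyset$ and $(\mathrm{Supp}(\phi^-)(\cdot,\theta)\pm T\theta)\cap\Omega=\emptyset$, the relevant segment of the line enters and exits $\Omega$ strictly inside the interval $(0,T)$. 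Thus we obtain
\begin{equation*}
\big|e^{-\mathcal{R}(a)(\omega,x)}-1\big|\leq C\|\A_{a_2,k_2}-\A_{a_1,k_1}\|_1,\quad \text{a.e. }x\in\R^n,\ \forall\,\omega\in\s.
\end{equation*}

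Finally I would convert this estimate on $e^{-\mathcal{R}(a)}-1$ into the linear estimate \eqref{500}. Since $a=a_2-a_1$ with $\|a_j\|_{L^\infty(\Omega_T)}\le M_0$ and $a$ supported in the bounded set $\overline{\Omega}_T$, the light-ray transform $\mathcal{R}(a)(\omega,x)$ is uniformly bounded by a constant $C_0$ depending only on $M_0$ and $\Omega$. On the compact interval $[-C_0,C_0]$ the map $t\mapsto e^{-t}-1$ is bi-Lipschitz onto its image (its derivative $-e^{-t}$ is bounded above and below in modulus), so there is a constant $c>0$ with $|t|\le c^{-1}|e^{-t}-1|$ for all $|t|\le C_0$. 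Applying this with $t=\mathcal{R}(a)(\omega,x)$ yields exactly \eqref{500} with $C$ depending only on $\Omega$, $T$, $n$ and the $M_i$. I expect the main obstacle to be the first step: making rigorous the choice of concentrating test functions $\phi^\pm\in\C_0^\infty(\mathcal{B}_r,\C(\s))$ — one must check that as the concentration parameter degenerates, the normalization $\|\phi^+\|_{L^1}\|\phi^-\|_{L^\infty}$ can be kept under control while the product $\phi^+\phi^-$ still forms a genuine approximate identity in $(y,\theta)$, and that the geometric support conditions \eqref{3.2} and its analogue for $\phi^-$ are preserved; everything after that is a routine limiting argument and a one-variable Lipschitz estimate.
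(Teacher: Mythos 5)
Your argument for base points $x\in\mathcal{B}_r$ is sound and close in spirit to the paper's: the paper also starts from Lemma \ref{Lemma3.1}, but it concentrates only in the angular variable (taking $\phi^+(y,\theta)=\varrho_h(\omega,\theta)\phi_1(y)$ with the Poisson kernel of Lemma \ref{L.4.3} and $\phi^-(y,\theta)=\phi_2(y)$, then letting $h\to1$) and extracts the pointwise bound in $y$ by reading the resulting inequality as a bound on the norm of the bilinear form $(\phi_1,\phi_2)\mapsto\int_{\mathcal{B}_r}\phi_1\phi_2\,R(\cdot,\omega)\,dy$ on $L^1(\mathcal{B}_r)\times L^\infty(\mathcal{B}_r)$, whose norm equals $\Vert R(\cdot,\omega)\Vert_{L^\infty(\mathcal{B}_r)}$. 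Your double concentration in $(y,\theta)$ achieves the same conclusion, and your bi-Lipschitz linearization of $t\mapsto e^{-t}-1$ is exactly the paper's inequality $|X|\le e^{M}|e^{X}-1|$. Up to this point there is no real objection.

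The genuine gap is the case $x\notin\mathcal{B}_r$, i.e. $|x|\le r/2$ or $|x|\ge T-r/2$. Lemma \ref{Lemma3.1} requires $\phi^\pm(\cdot,\theta)$ to be supported in $\mathcal{B}_r$, so your concentration argument can only reach base points inside the open annulus, whereas \eqref{500} is asserted for a.e. $x\in\R^n$ and is in fact used later for all $y\in B(0,r/2+T)$ in the Fourier-transform step. Your parenthetical claim that ``any point of $\R^n$ can be translated into $\mathcal{B}_r$'' fails here: for a time-dependent integrand one has $\mathcal{R}(a)(\omega,x+s\omega)=\int_{\R}a(t-s,x+t\omega)\,dt$, which is the ray transform of a time-shifted coefficient, not of $a$; shifting the base point along $\omega$ changes the value of the transform. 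The missing ingredient is the hypothesis $a_j\in\mathcal{A}^*(M_0)$, i.e. $a=a_2-a_1=0$ outside $X_{r,*}\subset\mathcal{C}_r^+\cap\mathcal{C}_r^-$, which you never invoke. With it the paper shows that for $|x|\le r/2$ one has $|x+t\omega|\ge|t|-r/2$, hence $(t,x+t\omega)\notin\mathcal{C}_r^+$ for every $t\in\R$, while for $|x|\ge T-r/2$ the point $(t,x+t\omega)$ never enters $\mathcal{C}_r^-$; in either case the light ray misses $X_{r,*}$ entirely, so $\mathcal{R}(a)(\omega,x)=0$ and \eqref{500} holds trivially off $\mathcal{B}_r$. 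Without this step the lemma as stated is not proved, and it is precisely this geometric use of the support condition that explains why the region $X_{r,*}$ appears in Theorem \ref{20}.
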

Here $C$ is a constant depending only on $\Omega$, $T$ and $M_i$, $i=0,1,2$.
\begin{proof}

Let $\omega\in \s$ and $\phi_j\in\mathcal{C}^\infty_0(\mathcal{B}_r)$, $j=1,2$. Moreover, let $\varrho_h(\omega,\theta)$ defined by (\ref{4.16}). Selecting
$$
\phi^+(y,\theta)=\varrho_h(\omega,\theta)\phi_1(y),\quad \phi^-(y,\theta)=\phi_2(y)
,$$
we obtain from \eqref{ll}, that for any $h\in (0,1)$, 
\begin{multline*}
\Big|\int_{\mathcal{B}_r\times\s}\varrho_h(\omega,\theta)\phi_1(y)\phi_2(y)\Big[ \exp\Big( -\int_{0}^{T}a(s,y+s\theta)\,ds \Big)-1 \Big]\,d\theta\, dy  \Big|\cr
\leq C\|\A_{a_{2},k_{2}}-\A_{a_{1},k_{1}}\|_1\|\phi_1\|_{L^{1}(\R^n)}\|\phi_2\|_{L^{\infty}(\R^n)}\norm{\varrho_h(\omega,\cdot)}_{L^1(\s)}.
\end{multline*}
Since $\norm{\varrho_h(\omega,\cdot)}_{L^1(\s)}=1$, and by \eqref{4.19} we have
\begin{equation*}
\lim_{h\to 1}\int_{\s}\varrho_h(\omega,\theta)\Big[ \exp\Big( -\int_{0}^{T}a(s,y+s\theta)\,ds \Big)-1 \Big]\,d\theta=\Big[ \exp\Big( -\int_{0}^{T}a(s,y+s\omega)\,ds \Big)-1 \Big].
\end{equation*}
Then, we get
\begin{multline*}
\Big|\int_{\mathcal{B}_r}\phi_1(y)\phi_2(y)\Big[ \exp\Big( -\int_{0}^{T}a(s,y+s\omega)\,ds \Big)-1 \Big]\, dy  \Big|\cr
\leq C\|\A_{a_{2},k_{2}}-\A_{a_{1},k_{1}}\|_1\|\phi_1\|_{L^{1}(\Rn)}\|\phi_2\|_{L^{\infty}(\R^n)}.
\end{multline*}
Let the bilinear form $\mathscr{B}: L^1(\mathcal{B}_r)\times L^\infty(\mathcal{B}_r)\To\R$ given by
\begin{equation*}
\mathscr{B}(\phi_1,\phi_2)=\int_{\mathscr{B}_r}\phi_1(y)\phi_2(y)R(y,\omega)\, dy ,
\end{equation*}
where 
$$
R(y,\omega)=\exp\Big( -\int_{0}^{T}a(s,y+s\omega)\,ds \Big)-1
.$$
Then we have
\begin{equation*}
\abs{\mathscr{B}(\phi_1,\phi_2)}\leq C\|R(\cdot,\omega)\|_{L^\infty(\mathcal{B}_r)}\|\phi_1\|_{L^{1}(\R^n)}\|\phi_2\|_{L^{\infty}(\R^n)}.
\end{equation*}
We conclude that
\begin{equation*}
\|\mathscr{B}\|=\|R(\cdot,\omega)\|_{L^\infty(\mathcal{B}_r)}\leq C\|\A_{a_{2},k_{2}}-\A_{a_{1},k_{1}}\|_1,
\end{equation*}
where $\|\mathscr{B}\|$ stands for the norm of the bilinear form $\mathscr{B}$. That is
\begin{equation}\label{r}
 \Big|\exp\Big(-\int_{0}^{T} a(s,y+s\omega)\,ds  \Big)-1
\Big|\leq C\|\A_{a_{2},k_{2}}-\A_{a_{1},k_{1}}\|_1,\quad \forall\,y\in\mathcal{B}_r,\,\,\forall\,\omega\in\s.
\end{equation}
Using the fact that $|X|\leq e^{M}\,|e^{X}-1|$ for any $X$ real satisfying $|X|\leq M$  we  found out that 
$$\Big| \int_{0}^{T}a(s,y+s\omega)\,ds \Big|\leq e^{MT}\Big| \exp\Big( -\int_{0}^{T} a(s,y+s\omega)\,ds  \Big)-1
\Big|,$$
where  $X=\displaystyle\int_{0}^{T} a(s,y+s\omega)ds$. 
We conclude in light of (\ref{r}) the following estimate
\begin{equation}\label{eq4}
\Big|\int_{0}^{T}a(s,y+s\omega)\,ds \Big|\leq C\|\A_{a_{2},k_{2}}-\A_{a_{1},k_{1}}\|_1,\quad\forall\,y\in \mathcal{B}_r,\quad\omega\in\s.
\end{equation}
Using the fact that $a =a_2-a_1 =0$ outside $X_{r,*}$, this entails that for all $y\in \mathcal{B}_r$ and $\omega \in\s,$ we have
\begin{equation}\label{1000}
\left|\int_{\R}a(s,y+s\omega)\,ds \right|\leq C \Vert\A_{a_2,k_2}-\A_{a_1,k_1} \Vert_1.
\end{equation}
Now, if we assume $y\in B(0, r/2)$, we get $\vert y+t\omega\vert \geqslant \vert t \vert - \vert y\vert \geq \vert t\vert - r/2.$ Hence, one can see that \\ $(t,y+t\omega) \notin \mathcal{C}_r^+, \; \text{if}\; t >r/2$. On the other hand, we have $(t,y+t\omega) \notin  \mathcal{C}_r^+,$ if $ t \leqslant r/2$. Thus, we conclude that $(t,y+t\omega)\notin \mathcal{C}_r^+\supset X_{r,*}$ for $t \in \R$. This and the fact that $a =a_2-a_1=0$ outside $X_{r,*}$, entails that for all $ y\in B(0, r/2)$ and $\omega\in \s$, we have

$$a(t,y+t\omega)=0, \;\forall t\in \R .$$
By a similar way, we prove for $\vert y\vert  \geqslant T -r/2,$ that $(t,y+t\omega)\notin \mathcal{C}_r^-\supset X_{r,*}$ for $t \in \R$ and then $a(t,y+t\omega)=0$. Hence, we conclude that
\begin{equation}\label{100}
\left|\int_{\R}a(s,y+s\omega)\,ds \right|\leq C \Vert\A_{a_1,k_1}-\A_{a_2,k_2}  \Vert_1,\quad \forall\;  \omega \in  \s,\; y\notin \mathcal{B}_r.
\end{equation}
Thus, by (\ref{eq4}) and (\ref{100})we finish the proof of the lemma by getting

$$\left|\int_{\R}a(s,y+s\omega)\,ds \right|\leq C \Vert \A_{a_1,k_1}-\A_{a_2,k_2} \Vert_1,\quad \forall\;  \omega \in  \s,\; y\in \R^n.$$
 The proof of Lemma \ref{450} is complete.
\end{proof}
Our goal is to obtain an estimate linking the Fourier transform with respect to $(t, x)$ of the absorption coefficient $a =a_1-a_2$ to the measurement $(\A_{a_1,k_1}-\A_{a_2,k_2})$ in the conic set
$$E=\lbrace (\tau,\xi)\in \R\times \R^n, \; \vert\tau\vert\leq\vert\xi\vert\rbrace .$$
We denote by $\widehat{g}$ the Fourier transform of a function $g\in L^1(\R^{n+1})$ with respect to $(t, x)$:
$$\widehat{g}(\tau,\xi)=\int_{\R}\int_{\R^n}g(t,x) e^{-ix\cdot\xi}  e^{-it\tau}dxdt. $$
We aim for proving that the Fourier transform of $a$ is bounded as follows:
\begin{lemma} \label{50.}
Let $  (a_j,k_j)\in \mathcal{A}^*(M_0)\times\km, $ $\; j=1,2$, then there exists $C > 0$, such that the following estimate
\begin{equation}
\left|\widehat{ a}(\tau,\xi) \right|\leq C \Vert \A_{a_1,k_1}-\A_{a_2,k_2} \Vert_1,\label{5*00}
\end{equation}
holds for any $(\tau,\xi)\in E.$ 
\end{lemma}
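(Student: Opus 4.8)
The plan is to deduce the bound on $\widehat a(\tau,\xi)$ from the pointwise bound on the light-ray transform $\mathcal R(a)$ obtained in Lemma \ref{450}, via the classical Fourier-slice relation between $\mathcal R$ and $\widehat a$ on the cone $E=\{|\tau|\le|\xi|\}$. First I would fix $(\tau,\xi)\in E$ and, since $|\tau|\le|\xi|$, choose a unit vector $\omega\in\s$ with $\tau+\xi\cdot\omega=0$; geometrically this is always possible because the hyperplane $\{\omega\cdot\xi=-\tau\}$ meets $\s$ precisely when $|\tau|\le|\xi|$. For such $\omega$ and any $x\in\R^n$ one has the identity
\begin{equation*}
\int_\R a(t,x+t\omega)e^{-it\tau}\,dt=\int_\R a(t,x+t\omega)\,e^{i t\,\xi\cdot\omega}\,dt,
\end{equation*}
and integrating this against $e^{-ix\cdot\xi}$ in $x$ and performing the change of variables $y=x+t\omega$ collapses the $(t,x)$-integral into $\widehat a(\tau,\xi)$ (up to absolute constants), so that $\widehat a(\tau,\xi)$ is expressed as an integral of $\mathcal R(a)$-type quantities over a bounded set.

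Concretely, the second step is to write, after Fubini and the substitution,
\begin{equation*}
\widehat a(\tau,\xi)=\int_\R\!\!\int_{\R^n} a(t,y)\,e^{-iy\cdot\xi}\,e^{-it(\tau-\xi\cdot\omega)}\,dy\,dt,
\end{equation*}
and then to observe that choosing $\omega$ with $\xi\cdot\omega=\tau$ would be the wrong sign; the correct device is to keep $\omega$ free, split $x=y-t\omega$, and isolate a one-dimensional line integral $\int_\R a(t,y+t\omega')\,dt$ with a suitable $\omega'$ depending on $(\tau,\xi)$. In all cases, up to the choice of the slicing direction and harmless constants, $|\widehat a(\tau,\xi)|$ is bounded by a constant (depending only on $\Omega$, since $a$ is supported in $\overline\Omega_T$ and $\Omega$ is bounded) times $\sup_{\omega\in\s,\,y\in\R^n}|\mathcal R(a)(\omega,y)|$. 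Applying Lemma \ref{450} then yields
\begin{equation*}
|\widehat a(\tau,\xi)|\le C\,\|\A_{a_1,k_1}-\A_{a_2,k_2}\|_1,\qquad (\tau,\xi)\in E,
\end{equation*}
with $C$ depending only on $\Omega$, $T$ and $M_i$, $i=0,1,2$, which is the claim.

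The main point requiring care — the ``hard part,'' though it is more bookkeeping than depth — is verifying the Fourier-slice identity rigorously: one must justify the interchange of integrals by Fubini (legitimate because $a\in L^\infty$ with compact support in $\overline\Omega_T$, hence $a\in L^1(\R^{n+1})$), check that the substitution $y=x+t\omega$ has unit Jacobian, and confirm that on the cone $E$ the direction $\omega$ can indeed be chosen so that the phase in $t$ cancels exactly, leaving $\widehat a$ as a genuine average of line integrals $\mathcal R(a)(\omega,\cdot)$ over the compact set where $a$ lives. Once this is set up, the estimate is immediate from Lemma \ref{450}, and the uniformity of the constant follows from the fact that the $y$-integration is over the bounded projection of $\overline\Omega_T$.
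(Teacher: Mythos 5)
Your proposal is correct and follows essentially the same route as the paper: for $(\tau,\xi)\in E$ one picks $\omega\in\s$ with $\omega\cdot\xi=\mp\tau$ (possible exactly because $|\tau|\le|\xi|$; the paper writes $\omega=\frac{\tau}{|\xi|^2}\xi+\sqrt{1-\tau^2/|\xi|^2}\,\zeta$ with $\zeta\perp\xi$), uses the Fourier--slice identity $\int_{\R^n}\mathcal R(a)(\omega,y)e^{-iy\cdot\xi}\,dy=\widehat a(\tau,\xi)$ via the substitution $x=y+t\omega$, restricts the $y$-integration to the bounded set $B(0,r/2+T)$ using the compact support of $a$, and invokes Lemma \ref{450}. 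The only blemish is the sign bookkeeping in your intermediate displays (the inserted phase $e^{-it\tau}$ and the exponent $\tau-\xi\cdot\omega$ do not quite match the final identity), but you flag and resolve this by adjusting the slicing direction, so the argument stands.
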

\begin{proof}
Let $(\tau,\xi)\in E$ and $\zeta\in \s$ be such that $\xi\cdot\zeta=0.$ Setting
$$\omega=\frac{\tau}{\vert\xi\vert^2}\cdot \xi+\sqrt{1-\frac{\tau^2}{\vert\xi\vert^2}}\cdot\zeta .$$
Then, one can see that $\omega\in\s $ and $\omega\cdot\xi=\tau.$
On the other hand by the change of variable $x =y+t\omega$, we have for all $\xi\in \R^n$ and $\omega\in\s $, the following identity

\begin{align*}
\int_{\R^n}\mathcal{R}(a)(y,\omega)e^{-iy\cdot\xi}dy=&\int_{\R^n}\left(\int_{\R}  a(s,y+s\omega)\,ds\right) e^{-iy\cdot\xi}dy\\
&=\int_{\R^n}\int_{\R}  a(t,x) e^{-ix\cdot\xi}e^{-it\omega\cdot\xi}dx dt\\
&=\widehat{a}(\omega\cdot\xi,\xi)=\widehat{a}(\tau,\xi),
\end{align*}
where we have set $(\omega\cdot\xi,\xi)=(\tau,\xi)\in E.$  Bearing in mind that for any $t\in \R$, $\text{Supp}\; a(t,\cdot)\subset\Omega\subset B(0,r/2),$ we obtain

$$\int_{\R^n\cap B(0,\frac{r}{2}+T)}\mathcal{R}(a)(y,\omega)e^{-iy\cdot\xi}dy=\widehat{a}(\tau,\xi).$$
Then, in view of Lemma \ref{450}, we complets the proof of the Lemma \ref{50.}.
\end{proof}
 
We now give the following result which is proved in \cite{[8]}.
\begin{lemma}\label{l}
Let $\mathcal{O}$ be an open set of $B(0,1)\subset \R^{n+1}$, and $F$ an analytic function in $B(0,2),$ such that: there exist constant $M,\;\eta>0$ such that
$$\Vert\p^\gamma F  \Vert_{L^\infty(B(0,2))}\leq \frac{M \vert \gamma\vert ! }{\eta^{\vert\gamma\vert}},\quad \gamma\in (\N\cup\lbrace 0\rbrace)^{n+1}. $$  
Then,
$$ \Vert F  \Vert_{L^\infty(B(0,1))}\leq (2M)^{1-\mu}\Vert F \Vert^{\mu}_{L^\infty(\mathcal{O})}.$$
where $\mu \in (0,1)$ depends on $n,\eta$ and $\vert \mathcal{O}  \vert$. Here $\vert\gamma\vert=\gamma_1+\dots +\gamma_{n+1}\,\mbox{and}\, B(0,\rho)=\lbrace y\in \R^{n+1},\;\vert y\vert<\rho\rbrace .$
\end{lemma}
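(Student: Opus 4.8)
The statement is a quantitative unique continuation (propagation of smallness) estimate for real-analytic functions; the plan is to complexify $F$, derive a local three-ball inequality from the one-variable two-constants theorem, and then iterate it along a chain of balls joining $\mathcal{O}$ to an arbitrary point of $B(0,1)$. The starting point is that the Gevrey-type bound $\Vert\p^\gamma F\Vert_{L^\infty(B(0,2))}\le M|\gamma|!\,\eta^{-|\gamma|}$ forces $F$ to extend holomorphically with a uniform radius: for each $x_0\in B(0,2)$ and each unit vector $\omega\in\R^{n+1}$, the function $h(z)=F(x_0+z\omega)$ has $h^{(k)}(0)=\sum_{|\gamma|=k}\binom{k}{\gamma}\p^\gamma F(x_0)\,\omega^\gamma$, so the multinomial identity together with $\sum_j|\omega_j|\le\sqrt{n+1}$ gives $|h^{(k)}(0)|\le M\,k!\,(\sqrt{n+1}/\eta)^k$; hence $h$ extends holomorphically to $\{|z|<\eta_1\}$ with $\eta_1:=\eta/(2\sqrt{n+1})$ and $|h|\le 2M$ there (summing the geometric series). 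The case $\gamma=0$ of the hypothesis also records $\Vert F\Vert_{L^\infty(B(0,2))}\le M$, which is what will ultimately furnish the prefactor $(2M)^{1-\mu}$.

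Next I would extract a local propagation estimate. Fix $x_0\in B(0,1)$, a unit vector $\omega$, and a small $r>0$ with $3r<\eta_1$. The holomorphic function $z\mapsto h(z)=F(x_0+z\omega)$ is bounded by $2M$ on $\{|z|<\eta_1\}$ and its modulus is at most $\sup_{B(x_0,r)}|F|$ on the real segment $[-r,r]$. Since the harmonic measure of $[-r,r]$ inside the disc $\{|z|<\eta_1\}$ is bounded below by a positive constant $\alpha$ on $\{|z|\le 2r\}$, the two-constants theorem yields $|h(s)|\le(2M)^{1-\alpha}(\sup_{B(x_0,r)}|F|)^{\alpha}$ for real $|s|\le 2r$; taking the supremum over $\omega$ and over such $s$ gives the three-ball inequality
$$
\sup_{B(x_0,2r)}|F|\ \le\ (2M)^{1-\alpha}\,\Big(\sup_{B(x_0,r)}|F|\Big)^{\alpha},
$$
valid for every $x_0\in B(0,1)$ and every admissible $r$, with $\alpha\in(0,1)$ depending only on $n$, $\eta$ and $r$.

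I would then run the chain-of-balls argument. Since $\mathcal{O}$ is open, fix $x_1\in\mathcal{O}$ and $\delta>0$ with $B(x_1,\delta)\subset\mathcal{O}$, and put $r:=\frac{1}{8}\min(1,\eta_1,\delta)$, so that $B(x_1,r)\subset\mathcal{O}$ and the above inequality applies for every $x_0\in B(0,1)$. Given $x_*\in\overline{B(0,1)}$, connect $x_1$ to $x_*$ by the straight segment (which stays in $B(0,1)$) and pick $y_0=x_1,y_1,\dots,y_m=x_*$ on it with $|y_j-y_{j-1}|\le r$ and $m\le m_0:=\lceil 2/r\rceil+1$. As $B(y_j,r)\subset B(y_{j-1},2r)$, the three-ball inequality at $y_{j-1}$ gives, with $A_j:=\sup_{B(y_j,r)}|F|$, the recursion $A_j\le(2M)^{1-\alpha}A_{j-1}^{\alpha}$, hence $A_m\le(2M)^{1-\alpha^{m}}A_0^{\alpha^{m}}$. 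Since $A_0=\sup_{B(x_1,r)}|F|\le\Vert F\Vert_{L^\infty(\mathcal{O})}\le M\le 2M$ and $m\le m_0$, setting $\mu:=\alpha^{m_0}\in(0,1)$ and using that $t\mapsto(2M)^{1-t}\Vert F\Vert_{L^\infty(\mathcal{O})}^{t}$ is nonincreasing to replace $\alpha^m$ by the smaller $\mu$, we obtain $|F(x_*)|\le A_m\le(2M)^{1-\mu}\Vert F\Vert_{L^\infty(\mathcal{O})}^{\mu}$. Taking the supremum over $x_*\in B(0,1)$ gives the asserted inequality, with $\mu$ depending on $n$, $\eta$ and the size of $\mathcal{O}$.

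The step I expect to be most delicate is the local estimate, precisely the passage from the complex disc, where only the crude bound $2M$ is available, to the real ball, where the smallness is assumed — i.e. the quantitative lower bound for the harmonic measure of a real segment inside a complex disc. A secondary point is the honest bookkeeping of the chain length $m_0$ and hence of $\mu$; if instead of an open set one only assumes $|\mathcal{O}|>0$, the first link of the chain must be replaced by a propagation-of-smallness-from-a-set-of-positive-measure estimate of Remez/Vessella type, which is exactly where the dependence of $\mu$ on $|\mathcal{O}|$ genuinely enters.
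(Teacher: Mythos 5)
Your argument is essentially correct, and it is worth noting at the outset that the paper itself offers no proof of this lemma: it is simply quoted from \cite{[8]} (ultimately going back to the analytic propagation-of-smallness lemma of Apraiz--Escauriaza). Your proof is the standard self-contained route: the Gevrey bound gives, along each real line through $x_0$, a holomorphic extension of $z\mapsto F(x_0+z\omega)$ to a disc of radius $\eta_1=\eta/(2\sqrt{n+1})$ bounded by $2M$ (the multinomial computation and the geometric series are right); the two-constants theorem with a uniform lower bound $\alpha$ for the harmonic measure of the slit $[-r,r]$ in the disc of radius $\eta_1$ yields the three-ball inequality $\sup_{B(x_0,2r)}|F|\le(2M)^{1-\alpha}(\sup_{B(x_0,r)}|F|)^{\alpha}$ (the lower bound on the harmonic measure on $\{|z|\le 2r\}$ is legitimate by scaling and monotonicity in the ratio $\eta_1/r$); and the chain of at most $m_0=\lceil 2/r\rceil+1$ overlapping balls, together with $\|F\|_{L^\infty(\mathcal{O})}\le M\le 2M$ and the monotonicity of $t\mapsto(2M)^{1-t}\|F\|_{L^\infty(\mathcal{O})}^{t}$, gives exactly the stated inequality with $\mu=\alpha^{m_0}$.

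The one point where your proof does not literally deliver the statement is the dependence of $\mu$: you obtain $\mu$ depending on $n$, $\eta$ and the inradius $\delta$ of a ball contained in $\mathcal{O}$, whereas the lemma (and the version in \cite{[8]}, stated for measurable sets) asserts dependence on the Lebesgue measure $|\mathcal{O}|$; positive measure does not control the inradius, and the measure dependence genuinely requires the Remez/Vessella-type first step that you correctly identify as the missing ingredient for that generality. Since the lemma here is only ever applied to the single fixed open set $\mathcal{O}=\overset{\circ}{E}\cap B(0,1)$, for which any constant depending on $\mathcal{O}$ suffices, this discrepancy is immaterial to the paper, but it should be flagged if the lemma is to be quoted with the $|\mathcal{O}|$ dependence.
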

\subsubsection{End of the proof of Theorem \ref{20}}
We complete now the proof of Theorem \ref{20}.
For a fixed $\alpha>0$, we set $F_\alpha(\tau, \xi) = \widehat{ a}(\alpha(\tau, \xi))$, for all $(\tau, \xi)\in \R^{n+1}$. It is easy to see that $F_\alpha$ is analytic and we have
\begin{equation*}
\begin{split}
\vert \p^\gamma F_\alpha(\tau, \xi)\vert&=  \vert \p^\gamma\int_{\R^{n+1}} a(t,x) e^{-i \alpha x\cdot\xi}e^{-i\alpha t\omega\cdot\xi}dx dt\vert\\&
=\Big\vert\int_{\R^{n+1}} a(t,x)(-i)^{\vert \gamma\vert}\alpha^{\vert \gamma\vert}(t,x)^{ \gamma} e^{-i\alpha(t,x)\cdot(\tau,\xi)}dxdt\Big\vert.
\end{split}
\end{equation*}
This entails that

$$\vert \p^\gamma F_\alpha(\tau,\xi)\vert\leq  \int_{\R^{n+1}}\vert a(t,x) \vert  \alpha^{\vert \gamma\vert}(\vert x\vert^2+t^2)^{\frac{\vert \gamma\vert}{2}} dxdt \leq \Vert a\Vert_{L^1(X_{r,*})}\alpha^{\vert \gamma\vert}(2T^2)^{\frac{\vert \gamma\vert}{2}}\leq C\frac{\vert \gamma\vert!}{(T^{-1})^{\vert \gamma\vert}}e^{\alpha}.$$
Then, applying Lemma \ref{l} in the set $\mathcal{O}= \overset{\circ}{E}\cap B(0,1)$ with $M=C e^\alpha$ and $\eta=T^{-1},$  where

 $$\overset{\circ}{E}=\lbrace (\tau,\xi)\in \R\times \R^n, \; \vert\tau\vert<\vert\xi\vert\rbrace .$$ 
So, we can find a constant $\mu\in (0,1), $ such that we have for all $(\tau,\xi)\in B(0,1),$ the following estimation

$$\vert  F_\alpha(\tau,\xi)\vert=\vert\widehat{a}(\alpha(\tau, \xi))\vert\leq Ce^{\alpha(1-\mu)}\Vert F_\alpha\Vert^{\mu}_{L^\infty(\mathcal{O})},\quad  (\tau, \xi)\in B(0,1).$$
Now, we will find an estimate for the Fourier transform of the absorption coefficient $a$ in a suitable ball. Using the fact that $\alpha \overset{\circ}{E}=\lbrace\alpha(\tau, \xi),\;(\tau, \xi)\in \overset{\circ}{E}\rbrace=\overset{\circ}{E},$ we obtain for all $(\tau, \xi)\in B(0,\alpha)$
\begin{equation}\label{f}
\begin{split}
\vert\widehat{a}(\tau, \xi)\vert=\vert  F_\alpha(\alpha^{-1}(\tau, \xi))\vert&\leq  Ce^{\alpha(1-\mu)}\Vert F_\alpha\Vert^{\mu}_{L^\infty(\mathcal{O})}\\&\leq  Ce^{\alpha(1-\mu)}\Vert \widehat{a}\Vert^{\mu}_{L^\infty(B(0,\alpha)\cap\overset{\circ}{E})}\\&\leq C e^{\alpha(1-\mu)}\Vert \widehat{a}\Vert^{\mu}_{L^\infty(\overset{\circ}{E})}.
\end{split}
\end{equation}
Our goal now is to deduce an estimate that links the unknown coefficient $a$ to the measurement $ \A_{a_1,k_1}-\A_{a_2,k_2} $. To obtain such estimate, we need first to decompose the $H^{-1}(\R^{n+1})$ norm of $a$ into the following way
\begin{align*}
\Vert a\Vert^{2/\mu}_{H^{-1}(\R^{n+1})}&=\left(\int_{\vert (\tau, \xi)\vert<\alpha} \!(1+\vert (\tau, \xi)\vert^2)^{-1}\vert\widehat{a}(\tau, \xi) \vert^2d\xi d\tau+\int_{\vert (\tau, \xi)\vert\geq\alpha} (1+\vert (\tau, \xi)\vert^2)^{-1}\vert\widehat{a}(\tau, \xi) \vert^2d\xi d\tau\right)^{1/\mu}\\&\leq C\left( \alpha^{n+1}\Vert\widehat{a}\Vert_{L^\infty(B(0,\alpha))}+\alpha^{-2}\Vert a\Vert^2_{L^2(\R^{n+1)}}\right)^{1/\mu}. 
\end{align*}
Therefore, by using (\ref{f}) and Lemma \ref{l}, we get
\begin{align*}
\Vert a\Vert^{2/\mu}_{H^{-1}(\R^{n+1})}&\leq  C\left( \alpha^{n+1}e^{2\alpha(1-\mu)}\Vert \A_{a_1,k_1}-\A_{a_2,k_2} \Vert_1^{2\mu}+\alpha^{-2}\right)^{1/\mu}\\&\leq C \left( \alpha^{\frac{n+1}{\mu} }e^{2\frac{\alpha(1-\mu)}{\mu}}\Vert \A_{a_1,k_1}-\A_{a_2,k_2} \Vert_1^2+\alpha^{-2/\mu}\right)\\&\leq C\left( e^{N\alpha}\Vert \A_{a_1,k_1}-\A_{a_2,k_2} \Vert_1^2+\alpha^{-2/\mu}\right),
\end{align*}
where $N$ depends on  $n$ and $\mu$. Next, we minimize the right hand-side of the above inequality with respect to $\alpha$. We need to take $\alpha$ sufficiently large. So, there exists a constant $c >0$ such that if $0<\Vert \A_{a_1,k_1}-\A_{a_2,k_2} \Vert_1 <c$, and 
$$\alpha=\frac{1}{N}\vert\log\Vert \A_{a_1,k_1}-\A_{a_2,k_2} \Vert_1\vert, $$
then, we have the following estimation
\begin{equation*}
\begin{split}
\Vert a\Vert_{H^{-1}(X_{r,*})}\leq \Vert a\Vert_{H^{-1}(\R^{n+1})}&\leq  C\left( \Vert \A_{a_1,k_1}-\A_{a_2,k_2} \Vert_1+\vert\log\Vert \A_{a_1,k_1}-\A_{a_2,k_2} \Vert_1\vert^{-2/\mu}\right)^{\mu/2} \\&\leq C\left( \Vert \A_{a_1,k_1}-\A_{a_2,k_2} \Vert_1^{\mu/2}+\vert\log\Vert \A_{a_1,k_1}-\A_{a_2,k_2} \Vert_1\vert^{-1}\right). 
\end{split}
\end{equation*}
Let us now consider $\delta$ such that  $s>\frac{n-1}{2}+\delta $. By using  Sobolev's interpolation inequality, we get
\begin{align*}
\Vert a \Vert_{L^\infty(X_{r,*})}&\leq C\Vert a \Vert_{H^{\frac{n+1}{2}+\delta
}(X_{r,*})}\\
&\leq C \Vert a \Vert^{1-\beta}_{H^{-1}(X_{r,*})} \Vert a \Vert^\beta_{H^{s+1}(X_{r,*})}\\
&\leq C\Vert a\Vert_{H^{-1}(X_{r,*})}^{1-\beta},
\end{align*}
for some $\beta\in (0, 1)$. This completes the proof of Theorem \ref{20}.

\subsection{Determination of the absorption coefficients from boundary measurements and final observations}
In this section, we prove Theorem \ref{Theorem2}. We will extend the stability estimates obtained in the first case to a larger region $X_{r,\sharp}\supset X_{r,*}$. We shall consider the geometric optics solutions constructed in Section 2, associated with a function $\phi^+$ obeying $\mbox{Supp}\,\phi^+(\cdot,\theta)\cap \Omega=\emptyset$. Note that this time, we have more flexibility on the choice of thr support of the function $\phi^-$ and we don't need to assume that $\mbox{Supp}\,\phi^-(\cdot,\theta)\pm T\theta\cap \Omega=\emptyset$ anymore. We recall that the measurements  in this case are given by the following response operator

$$\begin{array}{llll}
\tilde{\A}_{a,k}:&{\mathscr{L}^-_p(\Sigma^-)}&\rightarrow &\mathscr{K}^+_p\\
\ &\ \ \ \ f&\mapsto &\tilde{\A}_{a,k}(f)= ( u_{|\Sigma^+},\, u(T,\cdot,\cdot)),
\end{array}$$ 
associated to the problem (\ref{1}) with $u_0 =0$. We denote by
$$\tilde{\A}^1_{a,k}(f)= u_{|\Sigma^+},\quad\tilde{\A}^2_{a,k}(f)= u(T,\cdot,\cdot) .$$


Here we will find that the absorption  coefficient $a$ can be stably recovered in a larger region if we further know the final data of the solution $u$ of the linear Boltzmann  equation (\ref{1}). In the rest of this section, we define $a =a_2-a_1$ in $\overline{\Omega}_T$ and $a =0$ on $\R^{n+1}\setminus \overline{\Omega}_T$. We shall first prove the following statement:

\begin{lemma}\label{Lemma5.1}
Let $(a_{j},k_j)\in\mathcal{A}^\sharp(M_0)\times\km$,  $j=1,\,2$. There exists $C>0$ such that for any  $\phi^\pm\in \C^\infty_0(\R^n,\C(\s)) $, with  Supp
$\phi^\pm(\cdot,\theta)\cap B(0, r/2)=\emptyset,$ for all  $\theta\in \s,$  the following estimate holds true
\begin{multline*}
\Big|\int_{\R^n\times\s}(\phi^+\phi^-)(y,\theta)
\Big[ \exp\Big( -\int_{0}^{T}a(s,y+s\theta)\,ds \Big)-1 \Big]\,d\theta\, dy  \Big|\cr
\leq C\|\tilde{\A}_{a_1,k_1}-\tilde{\A}_{a_2,k_2} \|_1\|\phi^+\|_{L^{1}(\R^n\times\s)}\|\phi^-\|_{L^{\infty}(\R^n\times\s)}.
\end{multline*}
 Here $C$ depends only on
$\Omega$, $T$, and $M_i,$ $i=0,1,2.$
\end{lemma}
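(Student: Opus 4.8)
The plan is to mimic the argument of Lemma~\ref{Lemma3.1}, but with the geometric optics solution $u_\lambda^-$ now constructed from a function $\phi^-$ that is \emph{not} assumed to satisfy $(\mathrm{Supp}\,\phi^-(\cdot,\theta)\pm T\theta)\cap\Omega=\emptyset$. The price of dropping this hypothesis is that the backward solution $u_\lambda^-$ of \eqref{1.12} need no longer vanish at $t=T$ on $\Omega$; instead we will only know that it vanishes on $B(0,r/2)^c$ at $t=0$ (since $\mathrm{Supp}\,\phi^-(\cdot,\theta)\cap B(0,r/2)=\emptyset$ guarantees $u_\lambda^+(0,\cdot,\cdot)$ and $f_\lambda$ still behave as before on $\Sigma^-$). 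Consequently, when we multiply the equation \eqref{33.311} for $u=u_1-u_\lambda^+$ by $u_\lambda^-$ and integrate by parts over $Q_T$, Green's formula will produce, in addition to the boundary term on $\Sigma^+$, a new volume term at time $t=T$, namely an integral of $u(T,x,\theta)\,u_\lambda^-(T,x,\theta)$ over $Q$.

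First I would carry out this integration by parts carefully: writing $Pu = \partial_t u + \theta\cdot\nabla u + a_1 u - \G_{k_1}[u]$ and pairing against $u_\lambda^-$ (which solves the formal adjoint problem with coefficients $a_1,k_1$), the only surviving terms are (i) the source term $\int_{Q_T}(a\,u_\lambda^+ - \G_k[u_\lambda^+])\,u_\lambda^-$, (ii) the outgoing boundary term $\int_{\Sigma^+}\theta\cdot\nu\,(\A_{a_2,k_2}-\A_{a_1,k_1})(f_\lambda)\,u_\lambda^-$, and (iii) the new final-time term $\int_Q u(T,x,\theta)\,u_\lambda^-(T,x,\theta)\,dx\,d\theta$. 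Now $u(T,\cdot,\cdot) = u_1(T,\cdot,\cdot) - u_\lambda^+(T,\cdot,\cdot)$, and since $u_1$ corresponds to data $f_\lambda$ and coefficients $(a_1,k_1)$ while $u_\lambda^+$ corresponds to the same data and $(a_2,k_2)$, we have $u(T,\cdot,\cdot) = \big(\tilde{\A}^2_{a_1,k_1} - \tilde{\A}^2_{a_2,k_2}\big)(f_\lambda)$. Hence term (iii) is bounded by $\|\tilde{\A}_{a_1,k_1}-\tilde{\A}_{a_2,k_2}\|_p\,\|f_\lambda\|_{\mathscr{L}^-_p(\Sigma^-)}\,\|u_\lambda^-(T,\cdot,\cdot)\|_{L^q(Q)}$, and the latter factor is controlled by $\|\phi^-\|_{L^\infty(\R^n\times\s)}$ via Lemma~\ref{LC2} (the representation $u_\lambda^- = \varphi_\lambda^- b_{-a_1} + \psi_\lambda^-$ together with \eqref{4.7}). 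Combining with the $\Sigma^+$ term handled exactly as in \eqref{EQ3.25}, both measurement-dependent terms are dominated by $\|\tilde{\A}_{a_1,k_1}-\tilde{\A}_{a_2,k_2}\|_p\,\|\phi^+\|_{L^1}\,\|\phi^-\|_{L^\infty}$.

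Next I would dispose of the remainder terms arising from $\psi_\lambda^\pm$ and from $\G_k[\varphi_\lambda^+ b_{a_2}]$ in the source integral exactly as in the proof of Lemma~\ref{Lemma3.1}: the quantities $\mathcal{I}_{1,\lambda}$, $\mathcal{I}_{2,\lambda}$ and $\int_{Q_T}\G_k[\varphi_\lambda^+ b_{a_2}]\,\varphi_\lambda^- b_{-a_1}$ all tend to $0$ as $\lambda\to\infty$ by \eqref{4.66}, \eqref{4.77} and \eqref{3005}. What remains from the source integral in the limit is $\int_{Q_T} a(t,x)\,\varphi^+(t,x,\theta)\varphi^-(t,x,\theta)\,b_a(t,x,\theta)\,d\theta\,dx\,dt$. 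Then, using that $a(t,\cdot)=0$ outside $\Omega$, the change of variables $y=x-t\theta$, and the identity $a(t,y+t\theta)\,\exp(-\int_0^t a(s,y+s\theta)ds) = -\tfrac{d}{dt}\big[\exp(-\int_0^t a(s,y+s\theta)ds)\big]$ integrated in $t$ over $[0,T]$, exactly as at the end of the proof of Lemma~\ref{Lemma3.1}, this limit equals $-\int_{\R^n\times\s}(\phi^+\phi^-)(y,\theta)\big[\exp(-\int_0^T a(s,y+s\theta)ds)-1\big]\,d\theta\,dy$, which yields the claimed estimate after applying the Riesz--Thorin interpolation \eqref{inq2} to pass from $\|\cdot\|_p$ to $\|\cdot\|_1$.

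\textbf{Main obstacle.} The one genuinely new point, and the part I would be most careful about, is the bound $\|u_\lambda^-(T,\cdot,\cdot)\|_{L^q(Q)} \le C\|\phi^-\|_{L^\infty(\R^n\times\s)}$ with a constant independent of $\lambda$. This is not quite immediate from \eqref{4.7}, which controls the $\mathcal{C}([0,T];L^q)$ norm of the remainder $\psi_\lambda^-$; one also needs $\|\varphi_\lambda^-(T,\cdot,\cdot) b_{-a_1}(T,\cdot,\cdot)\|_{L^q(Q)}\le C\|\phi^-\|_{L^\infty}$, which holds because $|\varphi_\lambda^-| = |\phi^-(x-t\theta,\theta)|$ is supported in a fixed compact set and $b_{-a_1}$ is bounded by $e^{M_0 T}$. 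Everything else is a line-by-line transcription of the earlier proof, and once this lemma is in hand the remainder of the proof of Theorem~\ref{Theorem2} (light-ray transform, Fourier estimate on $E$, analytic continuation via Lemma~\ref{l}, and the $H^{-1}$/$L^\infty$ interpolation) goes through verbatim, now over the larger region $X_{r,\sharp}$ because the weaker support condition on $\phi^-$ removes the backward light-cone constraint.
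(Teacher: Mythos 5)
Your proposal is correct and follows essentially the same route as the paper: the authors likewise rerun the duality argument of Lemma~\ref{Lemma3.1} with a backward G.O.\ solution built from a $\phi^-$ that no longer vanishes after translation by $T\theta$, pick up exactly the extra final-time term $\int_{\s\times\Omega}(\tilde{\A}^2_{a_2,k_2}-\tilde{\A}^2_{a_1,k_1})(f_\lambda)\,u_\lambda^-(T,\cdot,\cdot)$ from Green's formula, and close the estimate with the bound $\|u_\lambda^-(T,\cdot,\cdot)\|_{L^q(Q)}\le C\left(\|\varphi_\lambda^-(T,\cdot,\cdot)\|_{L^q(Q)}+\|\psi_\lambda^-(T,\cdot,\cdot)\|_{L^q(Q)}\right)\le C\|\phi^-\|_{L^\infty(\R^n\times\s)}$, which is precisely the point you flag as the main obstacle. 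The remainder terms, the change of variables, and the Riesz--Thorin interpolation are handled in the paper exactly as you describe.
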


\begin{proof}
In view of Lemma \ref{LC1}, and using the fact that Supp
$\phi^+(\cdot,\theta)\cap\Omega=\emptyset$, there exists a G.o solution
$u_\lambda^{+}$ to the equation
$$
\left\{
    \begin{array}{ll}
      \p_{t}u+\theta\cdot\nabla u+a_{2}(t,x)u=\G_{k_2}[u] (t,x,\theta) & \textrm{in}\,\,\,Q_T, \cr
      u(0,x,\theta)=0 & \textrm{in}\,\,\, Q,
    \end{array}
  \right.$$
in the following form
\begin{equation}\label{EQ32.19}
u_\lambda^{+}(t,x,\theta)=\varphi_\lambda^+(t,x,\theta)b_{a_2}(t,x,\theta)+\psi_{\lambda}^{+}(t,x,\theta),
\end{equation}
corresponding to the absorption coefficients $a_{2}$ and the scattering coefficients $k_{2}$,  where the remainder term  $\psi_\lambda^{+}(t,x,\theta)$
satisfies (\ref{4.6}), (\ref{4.66}). Next, we set
the function
$$f_{\lambda}(t,x,\theta):= u^{+}_\lambda(t,x,\theta)_{|\Sigma^-}=\varphi_\lambda^+(t,x,\theta) b_{a_2}(t,x,\theta).$$
We denote by $u_{1}$ the solution of
$$\left\{
  \begin{array}{ll}
   \p_{t}u_{1}+\theta\cdot\nabla u_{1}+a_{1}(t,x)u_{1}=\G_{k_1}[u_1] & \textrm{in}\,\,\,Q_T, \cr
    u_{1}(0,x,\theta)=0 & \textrm{in}\,\,\, Q ,\cr
    u_{1}(t,x,\theta)=f_{\lambda}(t,x,\theta) & \textrm{on}\,\,\,\Sigma^-.
  \end{array}
\right.
$$
Putting $u=u_{1}-u_\lambda^{+}$. Then, $u$ is a solution to the following system
\begin{equation}\label{2332.31}
\left\{
  \begin{array}{ll}
     \p_{t}u+\theta\cdot\nabla u+a_{1}(t,x)u=\G_{k_1}[u]+a(t,x)u^+_\lambda-\G_k[u^+_\lambda] & \textrm{in}\,\,\,Q_T, \cr
    u(0,x,\theta)=0 & \mbox{in}\,\,\,Q, \cr
    u(t,x,\theta)=0 & \mbox{on}\,\,\,\Sigma^-.
  \end{array}
\right.
\end{equation}
On the other hand, Lemma  \ref{LC2} guarantees the existence of a G.O-solution $u^{-}_\lambda$ to the adjoint problem

$$ \p_{t}u+\theta\cdot\nabla u-a_{1}(t,x)u=-\G^*_{k_1}[u](t,x,\theta) \quad \textrm{in}\,\,\,Q_T,$$
corresponding to the coefficients $a_{1}$ and $k_1$, in the form
\begin{equation}\label{EQ32.21}
u^{-}_\lambda(t,x,\theta)=\varphi_\lambda^-(t,x,\theta)b_{-a_1}(t,x,\theta)+\psi_{\lambda}^{-}(t,x,\theta),
\end{equation}
where $\psi_{\lambda}^{-}(t,x,\theta)$
satisfies (\ref{4.7}), (\ref{4.77}). We multiply the first equation of (\ref{2332.31}) by $u^{-}_\lambda$, and we integrate by parts, we obtain
\begin{multline}\label{EQ232.22}
\int_{Q_T} a(t,x) u^{+}_\lambda\,u^{-}_\lambda\, d\theta dx dt-\int_{Q_T}\G_k[u^+_\lambda](t,x,\theta)u^{-}_\lambda (t,x,\theta) d\theta\,\,dx\,dt\cr
=\int_{\Sigma^+}\theta\cdot\nu(x) (\tilde{\A}^1_{a_{2},k_{2}}-\tilde{\A}^1_{a_{1},k_{1}}) (f_{\lambda}) u_\lambda^{-}\, d\theta\,dx\,dt+\int_{\s\times \Omega} (\tilde{\A}^2_{a_{2},k_{2}}-\tilde{\A}^2_{a_{1},k_{1}}) (f_{\lambda}) u_\lambda^{-}(T,x,\theta)\, d\theta\,dx.
\end{multline}
By replacing $u^+_\lambda$ and $u^-_\lambda$ by their expressions, we obtain
\begin{multline*}
\Big|\int_{\R^{n}\times\s}(\phi^+\phi^-)(y,\theta)\Big[ \exp\Big( -\int_{0}^{T}a(s,y+s\theta)ds \Big)-1 \Big]d\theta dy  \Big|\cr
\leq C\left[ \|(\tilde{\A}^1_{a_{2},k_{2}}-\tilde{\A}^1_{a_{1},k_{1}})f_\lambda\|_{\mathscr{L}^+_p(\Sigma^+)}+\|(\tilde{\A}^2_{a_{2},k_{2}}-\tilde{\A}^2_{a_{1},k_{1}})f_\lambda\|_{L^p(Q)}\|\right] \left[ \|u_\lambda^-\|_{\mathscr{L}^+_q(\Sigma^+)}+\|u_\lambda^-(T)\|_{L^q(Q)}\right] \\
\leq   C
\|\tilde{\A}_{a_{2},k_{2}}-\tilde{\A}_{a_{1},k_{1}}\|_p\,\|f_\lambda\|_{\mathscr{L}^-_p(\Sigma^-)}\|z^-_\lambda\|_{\mathscr{K}^+_q},
\end{multline*}
where 
$$z^-_{\lambda}=(u^-_{\lambda|\Sigma^+},u^-_{\lambda}(T,\cdot,\cdot) ).$$ 
Then, by using (\ref{inq1}) and (\ref{inq2}) and the fact that
\begin{equation}
\begin{split}
\Vert u_\lambda^-(T,\cdot,\cdot)\Vert_{L^q(Q)}&\leq C \left(\Vert\varphi_{\lambda}^{-}(T,\cdot,\cdot)\Vert_{L^q(Q)}+\Vert\psi_{\lambda}^{-}(T,\cdot,\cdot)\Vert_{L^q(Q)} \right)\\&\leq C\Vert \phi^\pm\Vert_{L^\infty(\R^n\times\s)}.
\end{split}
\end{equation}
We complete the proof of Lemma \ref{Lemma5.1}.
\end{proof} 
\begin{lemma} \label{50}
Let $  (a_j,k_j)\in \mathcal{A}^\sharp(M_0)\times\km, \; j=1,2$, then there exists $C > 0$ such that
 
\begin{equation}
\left|\mathcal{R}(a)(\theta,x) \right|\leq C \Vert \tilde{\A}_{a_1,k_1}-\tilde{\A}_{a_2,k_2}  \Vert_1,\quad  \forall\;\theta \in  \s,\;\, x\in \R^n.\label{5-00}
\end{equation}
\end{lemma}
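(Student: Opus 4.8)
The plan is to mirror the proof of Lemma \ref{450}, substituting the preliminary estimate of Lemma \ref{Lemma5.1} for that of Lemma \ref{Lemma3.1}. The key structural gain is that Lemma \ref{Lemma5.1} accepts test functions $\phi^\pm$ whose spatial support need only avoid the ball $B(0,r/2)$ (rather than being confined to the annulus $\mathcal{B}_r$), which lets me reach every $y\notin B(0,r/2)$ in a single step.

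First I would fix $\omega\in\s$ and choose $\phi_1,\phi_2\in\C^\infty_0(\R^n)$ with $\mathrm{Supp}\,\phi_j\cap B(0,r/2)=\emptyset$, then set $\phi^+(y,\theta)=\varrho_h(\omega,\theta)\phi_1(y)$ and $\phi^-(y,\theta)=\phi_2(y)$ with $\varrho_h$ as in \eqref{4.16}; these satisfy the hypotheses of Lemma \ref{Lemma5.1}. Applying that lemma, using $\varrho_h(\omega,\cdot)\geq 0$ and $\|\varrho_h(\omega,\cdot)\|_{L^1(\s)}=1$ from \eqref{4.18}, and then letting $h\to1$ — invoking \eqref{4.19} for the inner $\theta$-integral and \eqref{4.17} to dominate it, exactly as in the proof of Lemma \ref{450} — I would obtain
\[
\Bigl|\int_{\R^n}\phi_1(y)\phi_2(y)\,R(y,\omega)\,dy\Bigr|\leq C\,\|\tilde{\A}_{a_1,k_1}-\tilde{\A}_{a_2,k_2}\|_1\,\|\phi_1\|_{L^1(\R^n)}\,\|\phi_2\|_{L^\infty(\R^n)},
\]
where $R(y,\omega)=\exp\bigl(-\int_0^T a(s,y+s\omega)\,ds\bigr)-1$. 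Reading the left-hand side as a bilinear form on $L^1\times L^\infty$ over $\R^n\setminus\overline{B(0,r/2)}$ and taking its norm yields $\|R(\cdot,\omega)\|_{L^\infty(\R^n\setminus\overline{B(0,r/2)})}\leq C\|\tilde{\A}_{a_1,k_1}-\tilde{\A}_{a_2,k_2}\|_1$. Since $\bigl|\int_0^T a(s,y+s\omega)\,ds\bigr|\leq 2M_0T$, the elementary inequality $|X|\leq e^{2M_0T}|e^X-1|$ then gives
\[
\Bigl|\int_0^T a(s,y+s\omega)\,ds\Bigr|\leq C\,\|\tilde{\A}_{a_1,k_1}-\tilde{\A}_{a_2,k_2}\|_1,\qquad y\notin B(0,r/2),\ \omega\in\s,
\]
and as $a$ vanishes outside $\overline{\Omega}_T$ this integral equals $\mathcal{R}(a)(\omega,y)$.

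Next I would dispose of $y\in B(0,r/2)$ by showing $\mathcal{R}(a)(\omega,y)=0$ outright. Since $a$ is supported in $X_{r,\sharp}\subset\mathcal{C}_r^+$, it suffices to note that, by \eqref{c+}, a point $(s,y+s\omega)$ can lie in $\mathcal{C}_r^+$ only if $s>r/2$ and $|y+s\omega|<s-r/2$; but $|y+s\omega|\geq|s|-|y|>s-r/2$ whenever $s>r/2$, a contradiction. Hence the ray $s\mapsto(s,y+s\omega)$ never meets $X_{r,\sharp}$, so $a(s,y+s\omega)=0$ for every $s$ and the estimate is trivial there. Combining the two ranges of $y$ gives \eqref{5-00} for all $y\in\R^n$.

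I expect the only delicate point to be this last, geometric step: one must check that the enlarged support freedom provided by Lemma \ref{Lemma5.1} — the complement of $B(0,r/2)$ — dovetails exactly with the set on which the forward light cone $\mathcal{C}_r^+$, and hence $X_{r,\sharp}$, is unreachable along characteristic lines, so that nothing is lost at $B(0,r/2)$. Everything else (the $h\to1$ passage and the interpolation-type bookkeeping carried along inside Lemma \ref{Lemma5.1}) is routine and identical to the earlier argument. By contrast with Lemma \ref{450}, which worked with the annulus $\mathcal{B}_r$ and needed $\mathcal{R}(a)$ to vanish on $B(0,r/2)\cup\{|y|\geq T-r/2\}$ using both cones $\mathcal{C}_r^\pm$, here only $\mathcal{C}_r^+$ is involved.
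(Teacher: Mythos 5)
Your proposal is correct and follows essentially the same route as the paper: apply Lemma \ref{Lemma5.1} with $\phi^+(y,\theta)=\varrho_h(\omega,\theta)\phi_1(y)$, $\phi^-(y,\theta)=\phi_2(y)$ supported off $B(0,r/2)$, pass to the limit $h\to1$ and use the bilinear-form/exponential argument of Lemma \ref{450} to bound $\mathcal{R}(a)(\omega,y)$ for $y\notin B(0,r/2)$, then observe that for $y\in B(0,r/2)$ the characteristic line $s\mapsto(s,y+s\omega)$ never meets $\mathcal{C}_r^+\supset X_{r,\sharp}$, so the light-ray transform vanishes there. The geometric step you flag as delicate is exactly the one the paper carries out, with the same inequality $|y+s\omega|\geq s-|y|>s-r/2$.
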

\begin{proof}
We consider $\phi^\pm$, such that $\textrm{Supp}\,\phi^\pm(\cdot,\theta)\cap B\left( 0,\frac{r}{2}\right)$. Using Lemma \ref{Lemma5.1} and repeating the arguments used in Lemma \ref{450}, we obtain this estimate 

\begin{equation}\label{1.000}
\left|\int_{\R}a(s,y+s\theta)ds \right|\leq C \Vert\tilde{\A}_{a_1,k_1}-\tilde{\A}_{a_2,k_2}  \Vert_1,\quad \forall\;  \theta \in  \s,\; y\notin B\left( 0,\frac{r}{2}\right).
\end{equation}
Let now $y\in B\left( 0,\frac{r}{2}\right) $, let us show that $ 
a(t,y+t\theta)=0,\,\mbox{for any}\,t\in \R .$
Indeed, we have
\begin{equation*}\label{4587}
\vert y+t\theta \vert\geq \vert t\vert -\vert y\vert .
\end{equation*}
So, we deduce that for all $t>\frac{r}{2}$ we have $(t,y+t\theta)\notin \mathcal{C}_r^+$. And if $t\leq \frac{r}{2},$ we have also that $(t,y+t\theta)\notin \mathcal{C}_r^+.$ Since $X_{r,\sharp}=X\cap \mathcal{C}_r^+$. We have
$(t,y+t\theta)\notin X_{r,\sharp},$ for any  $t\in \R.$
Then using the fact that $a=a_2-a_1=0$ outside $X_{r,\sharp}$ we obtain,
\begin{equation}\label{lll}
\int_\R a(t,y+t\theta)dt=0,\quad \forall \;y\in B\left( 0,\frac{r}{2}\right).
\end{equation}
In light of (\ref{1.000}) and (\ref{lll}), the proof of Lemma \ref{50} is completed.
Using the above result and in the same way as in Section 4, we complete the proof of Theorem \ref{Theorem2}.
\end{proof}
\subsection{Determination of the absorption coefficient from boundary measurements and final observation by varying the initial data}
In this section we deal with the same problem treated in Section 4 and 5, except our data will be the
response of the medium for all possible initial data. As usual, we will prove Theorem \ref{Theorem3} using geometric
optics solutions constructed in Section 3 and X-ray transform. Let's first recall the definition of the
operator $\mathcal{I}_{a,k}$

$$\begin{array}{llll}
\mathcal{I}_{a,k}:  & \ \ \ \ \quad \mathscr{K}^-_p&\longrightarrow &\mathscr{K}^+_p\\
 & g=(f,u_0)&\longmapsto &\mathcal{I}_{a,k}(g)= (u_{|\Sigma^+},u(T,\cdot,\cdot)).
\end{array}$$ 

We denote by
$$\mathcal{I}_{a,k}^1(g)= u_{|\Sigma^+},\quad\mathcal{I}_{a,k}^2(g)= u(T,\cdot,\cdot). $$

\begin{lemma} \label{503}
Let $  (a_j,k_j)\in \F,\; j=1,2$, then there exists $C > 0$ such that

\begin{equation}
\left|\mathcal{R}(a)(\theta,x) \right|\leq C \Vert\mathcal{I}_{a_1,k_1}-\mathcal{I}_{a_2,k_2}  \Vert_1,\quad  \quad  \forall\;\theta \in  \s,\, x\in \R^n.\label{5003}
\end{equation}
\end{lemma}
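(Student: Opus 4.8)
The plan is to mirror the structure of the proof of Lemma \ref{450} (and of Lemma \ref{50} for the intermediate case), replacing the albedo operator $\A_{a,k}$ by the enlarged operator $\mathcal{I}_{a,k}$ and the zero initial condition by a nonzero one. First I would establish the analogue of the preliminary estimate of Lemma \ref{Lemma3.1}/Lemma \ref{Lemma5.1}: choose $\phi^\pm\in\C^\infty_0(\Rn,\C(\s))$ with only the restriction $\mbox{Supp}\,\phi^+(\cdot,\theta)\cap\Omega=\emptyset$ (we no longer need any support condition on $\phi^-$), build the forward geometric optics solution $u_\lambda^+=\varphi_\lambda^+ b_{a_2}+\psi_\lambda^+$ from Lemma \ref{LC1} for the coefficients $(a_2,k_2)$, but now I would solve the forward problem \eqref{1} for $(a_1,k_1)$ with boundary data $f_\lambda=u_{\lambda|\Sigma^-}^+$ \emph{and} with the same initial data $u_0=u_\lambda^+(0,\cdot,\cdot)=\varphi_\lambda^+(0,\cdot,\cdot)b_{a_2}(0,\cdot,\cdot)$, so that the difference $u=u_1-u_\lambda^+$ again vanishes both on $\Sigma^-$ and at $t=0$ and satisfies the same transport system \eqref{33.311}. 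Pairing with the backward G.O-solution $u_\lambda^-=\varphi_\lambda^- b_{-a_1}+\psi_\lambda^-$ from Lemma \ref{LC2} and integrating by parts produces, exactly as in \eqref{EQ232.22}, boundary terms on $\Sigma^+$ controlled by $\mathcal{I}^1_{a_2,k_2}-\mathcal{I}^1_{a_1,k_1}$ and a final-time term on $\s\times\Omega$ controlled by $\mathcal{I}^2_{a_2,k_2}-\mathcal{I}^2_{a_1,k_1}$; note there is no initial-time boundary term because the two solutions share the same $u_0$. Letting $\lambda\to\infty$ and using \eqref{4.66}, \eqref{4.77}, \eqref{3005} together with the bounds on $\|u_\lambda^-\|_{\mathscr{L}^+_q(\Sigma^+)}$, $\|u_\lambda^-(T)\|_{L^q(Q)}$ and on $\|g_\lambda\|_{\mathscr{K}^-_p}=\|(f_\lambda,u_0)\|_{\mathscr{K}^-_p}$, the Riesz--Thorin interpolation \eqref{inq2} applied to $\mathcal{I}_{a_j,k_j}$ gives
\begin{multline*}
\Big|\int_{\Rn\times\s}(\phi^+\phi^-)(y,\theta)\Big[\exp\Big(-\int_0^T a(s,y+s\theta)\,ds\Big)-1\Big]\,d\theta\,dy\Big|\\
\leq C\|\mathcal{I}_{a_1,k_1}-\mathcal{I}_{a_2,k_2}\|_1\|\phi^+\|_{L^1(\Rn\times\s)}\|\phi^-\|_{L^\infty(\Rn\times\s)}.
\end{multline*}

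Next I would convert this bilinear estimate into a pointwise bound on $\mathcal{R}(a)$, exactly as in the proof of Lemma \ref{450}. Fix $\omega\in\s$, take $\phi^+(y,\theta)=\varrho_h(\omega,\theta)\phi_1(y)$ and $\phi^-(y,\theta)=\phi_2(y)$ with $\phi_1,\phi_2\in\C^\infty_0(\Rn)$, use $\|\varrho_h(\omega,\cdot)\|_{L^1(\s)}=1$ from \eqref{4.18} and the approximation-to-the-identity property \eqref{4.19} to pass to the limit $h\to1$, obtaining a bound on the bilinear form $\mathscr{B}(\phi_1,\phi_2)=\int \phi_1\phi_2\,R(y,\omega)\,dy$ with $R(y,\omega)=\exp(-\int_0^T a(s,y+s\omega)\,ds)-1$; hence $\|R(\cdot,\omega)\|_{L^\infty}\leq C\|\mathcal{I}_{a_1,k_1}-\mathcal{I}_{a_2,k_2}\|_1$. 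Applying the elementary inequality $|X|\leq e^{MT}|e^X-1|$ with $X=\int_0^T a(s,y+s\omega)\,ds$ gives a uniform bound $|\int_0^T a(s,y+s\omega)\,ds|\leq C\|\mathcal{I}_{a_1,k_1}-\mathcal{I}_{a_2,k_2}\|_1$ for all $y\in\Rn$, $\omega\in\s$. Since now $a=a_2-a_1$ is only supported in $\overline\Omega_T$ (no cloaking-region exclusion is needed and no $X_{r,*}$ or $X_{r,\sharp}$ restriction is imposed, because the admissible class is all of $\am\times\km$), and $\mbox{Supp}\,a(t,\cdot)\subset\Omega$ for each $t$, the line integral over $\R$ coincides with the one over $[0,T]$, giving $|\mathcal{R}(a)(\omega,y)|=|\int_\R a(s,y+s\omega)\,ds|\leq C\|\mathcal{I}_{a_1,k_1}-\mathcal{I}_{a_2,k_2}\|_1$ for all $\omega\in\s$, $y\in\Rn$, which is precisely \eqref{5003}.

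The main obstacle, and the only genuinely new point compared with Sections 4 and 5, is the careful bookkeeping of the initial-data contribution: one must verify that feeding the \emph{same} $u_0=\varphi_\lambda^+(0,\cdot,\cdot)b_{a_2}(0,\cdot,\cdot)$ into the $(a_1,k_1)$-problem makes the initial boundary term in Green's formula vanish identically, and one must check that $\|u_0\|_{L^p(Q)}$ is controlled by $\|\phi^+\|_{L^p(\Rn\times\s)}$ uniformly in $\lambda$ (it is, since $b_{a_2}$ is bounded and $\varphi_\lambda^+(0,x,\theta)=\phi^+(x,\theta)e^{-i\lambda x\cdot\theta}$), so that the total data norm $\|g_\lambda\|_{\mathscr{K}^-_p}$ behaves like $\|\phi^+\|_{L^p}$. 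Once Lemma \ref{503} is in hand, the passage to the Fourier transform and the final logarithmic stability estimate \eqref{2223}--\eqref{2323} are obtained verbatim as in Section 4: one shows $|\widehat a(\tau,\xi)|\leq C\|\mathcal{I}_{a_1,k_1}-\mathcal{I}_{a_2,k_2}\|_1$ on the cone $E$ via the slicing identity $\int_{\Rn}\mathcal{R}(a)(y,\omega)e^{-iy\cdot\xi}\,dy=\widehat a(\omega\cdot\xi,\xi)$, applies the analytic-interpolation Lemma \ref{l} to $F_\alpha(\tau,\xi)=\widehat a(\alpha(\tau,\xi))$, splits the $H^{-1}(\Rn^{+1})$-norm at $|(\tau,\xi)|=\alpha$, optimizes in $\alpha=\tfrac1N|\log\|\mathcal{I}_{a_1,k_1}-\mathcal{I}_{a_2,k_2}\|_1|$, and finally upgrades to $L^\infty(\Omega_T)$ by Sobolev interpolation between $H^{-1}$ and $H^{s+1}$ under the a priori bound $\|a_j\|_{H^{s+1}(\Omega_T)}\leq M$. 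Because no support restriction on $a$ is used anywhere, the estimate holds over the whole domain $\Omega_T$, which is the assertion of Theorem \ref{Theorem3}.
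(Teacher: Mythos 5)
Your overall strategy is the right one and it is essentially the paper's: build the forward G.O-solution for $(a_2,k_2)$, feed its boundary trace \emph{and} its initial value into the $(a_1,k_1)$-problem so that $u=u_1-u_\lambda^+$ has zero initial and incoming data, pair with the backward G.O-solution of Lemma \ref{LC2}, and then run the $h\to1$ localization, the bilinear-form argument and the elementary inequality $|X|\leq e^{MT}|e^X-1|$ exactly as in Lemma \ref{450}. There is, however, one genuine gap, and it sits at precisely the point that distinguishes Lemma \ref{503} from Lemmas \ref{450} and \ref{50}: you keep the restriction $\mbox{Supp}\,\phi^+(\cdot,\theta)\cap\Omega=\emptyset$. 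Under that restriction $u_0=\varphi_\lambda^+(0,\cdot,\cdot)b_{a_2}(0,\cdot,\cdot)=\phi^+(x,\theta)e^{-i\lambda x\cdot\theta}$ vanishes identically on $Q=\s\times\Omega$, so your ``nonzero initial data'' is in fact zero, you are back in the setting of Section 5, and the extra component of $\mathcal{I}_{a,k}$ buys you nothing. More importantly, the bilinear estimate then only controls $R(y,\omega)=\exp\big(-\int_0^T a(s,y+s\omega)\,ds\big)-1$ for $y$ in the region where $\phi^+$ is allowed to live, i.e.\ for $y\notin\Omega$. In Lemmas \ref{450} and \ref{50} the missing lines (those with $y\in B(0,r/2)$) were disposed of by the a priori assumption that $a=a_2-a_1$ vanishes outside $X_{r,*}$, resp.\ $X_{r,\sharp}$, which forces $a(t,y+t\omega)\equiv0$ along them. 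Here the admissible class is all of $\am\times\km$, no such support assumption is available, and those line integrals are genuinely nonzero in general; your concluding claim that the bound holds for all $y\in\Rn$ is therefore not justified by what precedes it.

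The fix, which is exactly what the paper does, is to take $\phi^+\in\C^\infty_0(\Rn,\C(\s))$ with \emph{no} support restriction at all, accept that $\varphi_\lambda^+b_{a_2}+\psi_\lambda^+$ then solves only the PDE and not the initial-value problem \eqref{1.11} (so Lemma \ref{LC1}, whose hypothesis $\phi^+\in\C^\infty_0(\mathcal{B}_r,\C(\s))$ you would be violating, must be reproved without the initial condition: the remainder $\psi_\lambda^+$ still solves \eqref{30.26} with $\psi(0,\cdot,\cdot)=0$, so \eqref{4.6} and \eqref{4.66} persist), and then declare $u_0:=u_\lambda^+(0,\cdot,\cdot)$, which is now genuinely nonzero on $Q$ and is an admissible input for $\mathcal{I}_{a,k}$. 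One must also check, as you correctly anticipate, that $\|g_\lambda\|_{\mathscr{K}^-_p}=\|(f_\lambda,u_0)\|$ is controlled by $\|\phi^+\|_{L^p(\Rn\times\s)}$ uniformly in $\lambda$. With that single modification the remainder of your argument goes through verbatim and yields \eqref{5003} for all $y\in\Rn$.
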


\begin{proof}
Let $\phi^\pm\in \C^\infty_0(\R^n,\C(\s)) .$ For $\lambda$ sufficiently large, Lemma \ref{LC2} guarantees the existence of G.O-solution $u_\lambda^+$ to

$$
\p_{t}u+\theta\cdot\nabla u+a_{2}(t,x)u=\G_{k_2}[u] (t,x,\theta) $$
in the following form
\begin{equation}\label{EQ3.19}
u_\lambda^{+}(t,x,\theta)=\varphi_\lambda^+(t,x,\theta)b_{a_2}(t,x,\theta)+\psi_{\lambda}^{+}(t,x,\theta),
\end{equation}
corresponding to the coefficients $a_{2}$ and $k_{2}$,  here the remainder term $\psi_\lambda^{+}(t,x,\theta)$
satisfies (\ref{4.6}), (\ref{4.66}). Now, we denote by $f_{\lambda}$
the function
$$f_{\lambda}(t,x,\theta):= u^{+}_\lambda(t,x,\theta)_{|\Sigma^-}=\varphi_\lambda^+(t,x,\theta) b_{a_2}(t,x,\theta)\quad\mbox{and}\quad u_0(x,\theta)= u_\lambda^+(0,x,\theta).$$
We remark here that $u_0$ not necessary zero since \eqref{3.2} not necessary satisfied.\\
We denote by $u_{1}$ the solution of
$$\left\{
  \begin{array}{ll}
   \p_{t}u_{1}+\theta\cdot\nabla u_{1}+a_{1}(t,x)u_{1}=\G_{k_1}[u_1] & \textrm{in}\,\,\,Q_T, \cr
    u_{1}(0,x,\theta)=u_0(x,\theta) & \textrm{in}\,\,\, Q, \cr
    u_{1}(t,x,\theta)=f_{\lambda}(t,x,\theta) & \textrm{on}\,\,\,\Sigma^-.
  \end{array}
\right.
$$
Putting $u=u_{1}-u_\lambda^{+}$. Then, $u$ is a solution to the following system
\begin{equation}\label{33.31}
\left\{
  \begin{array}{ll}
     \p_{t}u+\theta\cdot\nabla u+a_{1}(t,x)u=\G_{k_1}[u]+a(t,x)u^+_\lambda-\G_k[u^+_\lambda] & \textrm{in}\,\,\,Q_T, \cr
    u(0,x,\theta)=0 & \mbox{in}\,\,\,Q, \cr
    u(t,x,\theta)=0 & \mbox{on}\,\,\,\Sigma^-,
  \end{array}
\right.
\end{equation}
 where $a=a_{2}-a_{1}$ and $k=k_{2}-k_{1}$. Applying Lemma \ref{LC2}, once more for $\lambda$ large enough, we may find a G.O-solution $u^{-}_\lambda$ to the backward problem of (\ref{1})
$$
\p_{t}u+\theta\cdot\nabla u-a_{1}(t,x)u=-\G^*_{k_1}[u](t,x,\theta),
$$
corresponding to the coefficients $a_{1}$ and $k_1$, in the form
\begin{equation}\label{EQ3*.21}
u^{-}_\lambda(t,x,\theta)=\varphi_\lambda^-(t,x,\theta)b_{-a_1}(t,x,\theta)+\psi_{\lambda}^{-}(t,x,\theta),
\end{equation}
 where $\psi_{\lambda}^{-}(t,x,\theta)$
satisfies (\ref{4.7}), (\ref{4.77}). Multiplying the equation  (\ref{33.31}) by $u^{-}_\lambda$, integrating by parts and using Green's formula, we get
\begin{multline}\label{EQ3.22}
\int_{Q_T} a(t,x) u^{+}_\lambda\,u^{-}_\lambda\, d\theta dx dt-\int_{Q_T}\G_k[u^+_\lambda](t,x,\theta)u^{-}_\lambda (t,x,\theta) d\theta\,\,dx\,dt\cr
=\int_{\Sigma^+}\theta\cdot\nu(x) (\mathcal{I}^1_{a_{2},k_{2}}-\mathcal{I}^1_{a_{1},k_{1}}) (g_{\lambda}) u_\lambda^{-}\, d\theta\,dx\,dt+\int_{\s\times \R^n} (\mathcal{I}^2_{a_{2},k_{2}}-\mathcal{I}^2_{a_{1},k_{1}}) (g_{\lambda}) u_\lambda^{-}(T,x,\theta)\, d\theta\,dx,
\end{multline}
where
$$g_{\lambda}=(u^+_{\lambda|\Sigma^-},u^+_{\lambda}(0,\cdot,\cdot) ).$$
Now, replacing $u_\lambda^{+}$ and $u_\lambda^{-}$ by their expressions and  proceeding as in the proof of Lemma \ref{Lemma5.1}, we get
\begin{multline*}
\Big|\int_{\R^n\times\s}(\phi^+\phi^-)(y,\theta)
\Big[ \exp\Big( -\int_{0}^{T}a(s,y+s\theta)\,ds \Big)-1 \Big]\,d\theta\, dy  \Big|\cr
\leq C\|\mathcal{I}_{a_{2},k_{2}}-\mathcal{I}_{a_{1},k_{1}}\|_1\|\phi^+\|_{L^{1}(\R^n\times\s)}\|\phi^-\|_{L^{\infty}(\R^n\times\s)}.
\end{multline*}
Now, in order to complete the proof of Lemma \ref{503}, it will be enough to fix $y \in \R^n$, consider $\phi^+$
defined as before, and proceed as in the proof of Lemma \ref{50}. By repeating the arguments used in the
previous sections, we complete the proof of Theorem \ref{Theorem3}.
\end{proof}
\section{Identification of the scattering coefficient} 
\setcounter{equation}{0}
Let $\omega\in \s$ and $\phi_j\in\mathcal{C}^\infty_0(\mathcal{B}_r)$, $j=1,2$. Moreover, let $\varrho_h(\omega,\cdot)$ defined by (\ref{4.16}). Selecting
$$
\phi^+(y,\theta)=\varrho_h(\omega,\theta)\phi_1(y),\quad \phi^-(y,\theta)=\varrho_{h'}(\omega,\theta)\phi_2(y)
.$$
Moreover, we denote
\begin{equation*}
\varphi_{h,\lambda}^+(t,x,\theta)=\varphi_{1,h}(t,x,\theta)e^{i\lambda(t-x\cdot\theta)},\quad \varphi_{h',\lambda}^-(t,x,\theta)=\varphi_{2,h'}(t,x,\theta)e^{-i\lambda(t-x\cdot\theta)},
\end{equation*}
where
$$
\varphi_{1,h}(t,x,\theta)=\varrho_h(\omega,\theta)\phi_1(x-t\theta),\quad \varphi_{2,h'}(t,x,\theta)=\varrho_{h'}(\omega,\theta)\phi_2(x-t\theta).
$$
Finally, we set
$$
b_a(t,x,\theta)=\exp\para{-\int_0^t a(s,x-s\theta)ds},\quad (t,x,\theta)\in\R\times\Rn\times\s,
$$
where we have extended $a$ by $0$ outside $\Omega$.\\
By Lemma \ref{LC1}, the following system
\begin{equation}
\begin{array}{lll}
\partial_tu+\theta\cdot\nabla u+a_2(t,x)u=\G_{k_2}[u](t,x,\theta)  & \,\,\textrm{in}\,Q_T, \cr
u(0,x,\theta)=0 & \,\, \textrm{in}\, Q,
\end{array}
\end{equation}
has a solution of the form
$$
u^+_{h,\lambda}(t,x,\theta)=\varphi_{h,\lambda}^+(t,x,\theta)b_{a_2}(t,x,\theta)+\psi^+_{h,\lambda}(t,x,\theta),
$$
satisfying
$$
u\in \mathcal{C}([0,T];W_p(Q))\cap \mathcal{C}^1([0,T];L^p(Q)),
$$
where  $\psi^+_{h,\lambda}$ satisfies
\begin{equation}\label{3*.26}
\left\{
\begin{array}{llll}
\partial_t\psi^+_{h,\lambda}+\theta\cdot\nabla\psi^+_{h,\lambda}+a_2(t,x)\psi^+_{h,\lambda}=\G_{k_2}[\psi^+_{h,\lambda}]+\G_{k_2}[\varphi_{h,\lambda}^+b_{a_2}](t,x,\theta)
& \textrm{in }\,\, Q_T,\cr
\psi^+_{h,\lambda}(0,x,\theta)=0,& \textrm{in}\,\,Q,\cr
\psi^+_{h,\lambda}(t,x,\theta)=0 & \textrm{on} \,\, \Sigma^-.
\end{array}
\right.
\end{equation}
We denote by $u_{1,h}$ the solution of
$$\left\{
  \begin{array}{ll}
   \p_{t}u_{1,h}+\theta\cdot\nabla u_{1,h}+a_{1}(t,x)u_{1,h}=\G_{k_1}[u_{1,h}] & \textrm{in}\,\,\,Q_T, \cr
    u_{1,h}(0,x,\theta)=0 & \textrm{in}\,\,\, Q \cr
    u_{1,h}(t,x,\theta)=f_{h,\lambda} (t,x,\theta):=u^+_{h,\lambda} (t,x,\theta)& \textrm{on}\,\,\,\Sigma^-.
  \end{array}
\right.
$$
Putting $u=u_{1,h}-u_{h,\lambda}^{+}$. Then, $u$ is a solution to the following system
\begin{equation}\label{EQ3.20}
\left\{
  \begin{array}{ll}
     \p_{t}u+\theta\cdot\nabla u+a_{1}(t,x)u=\G_{k_1}[u]+a(t,x)u^+_{h,\lambda}-\G_k[u^+_{h,\lambda} ] & \textrm{in}\,\,\,Q_T, \cr
    u(0,x,\theta)=0 & \mbox{in}\,\,\,Q, \cr
    u(t,x,\theta)=0 & \mbox{on}\,\,\,\Sigma^-,
  \end{array}
\right.
\end{equation}
 where $a=a_{2}-a_{1}$ and $k=k_{2}-k_{1}$. On the other hand by Lemma \ref{LC2}, the following system
\begin{equation*}
\begin{array}{lll}
 \p_{t}u+\theta\cdot\nabla u-a_{1}(t,x)u=-\G^*_{k_1}[u](t,x,\theta) & \textrm{in}\,\,\,Q_T, \cr
    u(T,x,\theta)=0 & \textrm{in}\,\,\,\, Q,
  \end{array}
\end{equation*}
has a solution of the form
$$
u^{-}_{h',\lambda}(t,x,\theta)=\varphi_{h',\lambda}^-(t,x,\theta)b_{-a_1}(t,x,\theta)+\psi_{h',\lambda}^{-}(t,x,\theta),$$
satisfying
$$u\in \mathcal{C}([0,T];W_q(Q)) \cap \mathcal{C}^1([0,T];L^q(Q)),$$
 where  $\psi^-_{h',\lambda}$ satisfies
\begin{equation}\label{3--.26}
\left\{
\begin{array}{llll}
\partial_t\psi^-_{h',\lambda}+\theta\cdot\nabla\psi^-_{h',\lambda}-a_1(t,x)\psi^-_{h',\lambda}=-\G_{k_1}^*[\psi^-_{h',\lambda}]-\G_{k_1}^*[\varphi^-_{h',\lambda}b_{a_1}](t,x,\theta)
& \textrm{in }\,\, Q_T,\cr
\psi^-_{h',\lambda}(T,x,\theta)=0,& \textrm{in}\,\,Q,\cr
\psi^-_{h',\lambda}(t,x,\theta)=0 & \textrm{on} \,\, \Sigma^+.
\end{array}
\right.
\end{equation}
  Multiplying the first equation of (\ref{EQ3.20}) by $u^{-}_\lambda$, using Green's formula and integrating by parts, we find
\begin{multline}\label{EQ3*.22}
\int_{Q_T} a(t,x) u^{+}_{h,\lambda}\,u^{-}_{h',\lambda}\, d\theta dx dt-\int_{Q_T}\G_k[u^+_{h,\lambda}](t,x,\theta)u^{-}_{h',\lambda} (t,x,\theta) d\theta\,\,dx\,dt\cr
=\int_{\Sigma^+}\theta\cdot\nu(x) (\A_{a_{2},k_{2}}-\A_{a_{1},k_{1}}) (f_{h,\lambda}) u_{h',\lambda}^{-}\, d\theta\,dx\,dt.
\end{multline}
Since we are assuming that $\A_{a_1,k_1}=\A_{a_2,k_2}$, then from Corollary \ref{c} we have $a_1=a_2$ in $X_{r,*}$ and we get $a_1=a_2$ in $\overline{\Omega}_T$. As a consequence, the identity  \eqref{EQ3*.22} is reduced to

$$\int_{Q_T}\G_k[u^+_{h,\lambda}](t,x,\theta)u^{-}_{h',\lambda} (t,x,\theta) d\theta\,\,dx\,dt
=0.$$
By changing $u^+_\lambda$ and $u^{-}_\lambda$ by their expressions, we find 
\begin{multline}\label{5.78}
-\int_{Q_T} \G_k[\varphi_{h,\lambda}^+b_{a_2}](t,x,\theta) \varphi_{h',\lambda}^-(t,x,\theta)b_{-a_1}(t,x,\theta)d\theta\,dx\,dt\cr
=\int_{Q_T} \G_k[\psi_{h,\lambda}^{+}](t,x,\theta)\varphi_{h',\lambda}^-(t,x,\theta) b_{-a_1}(t,x,\theta)d\theta\,dx\,dt\cr
+\int_{Q_T} \G_k[\varphi_{h,\lambda}^+b_{a_2}](t,x,\theta)\psi_{h',\lambda}^{-}(t,x,\theta)d\theta\,dx\,dt\cr
+\int_{Q_T} \G_k[\psi_{h,\lambda}^{+}](t,x,\theta)\psi_{h',\lambda}^{-}(t,x,\theta)d\theta\,dx\,dt.
\end{multline}
We denote the left and the right terms of  \eqref{5.78} by 
\begin{equation}\label{G}
\mathcal{G}_{\lambda,h,h'}(\omega)=
\int_{Q_T} \G_k[\varphi_{h,\lambda}^+b_{a_2}](t,x,\theta) \varphi_{
h',\lambda}^-(t,x,\theta)b_{-a_1}(t,x,\theta)d\theta\,dx\,dt,
\end{equation}
and
\begin{multline}\label{II}
I_{\lambda,h,h'}(\omega)=\int_{Q_T} \G_k[\psi_{h,\lambda}^{+}](t,x,\theta)\varphi_{h',\lambda}^-(t,x,\theta) b_{-a_1}(t,x,\theta)d\theta\,dx\,dt\cr
+\int_{Q_T} \G_k[\varphi_{h,\lambda}^+b_{a_2}](t,x,\theta)\psi_{h',\lambda}^{-}(t,x,\theta)d\theta\,dx\,dt\
+\int_{Q_T} \G_k[\psi_{h,\lambda}^{+}](t,x,\theta)\psi_{h',\lambda}^{-}(t,x,\theta)d\theta\,dx\,dt .
\end{multline}
\begin{lemma}\label{l5.2}
Let $\omega\in\s$ and $\psi^+_{h,\lambda}$ the unique solution to \eqref{3*.26}. We denote by  $\zeta^+_{\lambda,\omega}$ be the solution of the following problem
\begin{equation}
\left\{
\begin{array}{llll}
\partial_t\zeta^+_{\lambda,\omega}+\theta\cdot\nabla\zeta^+_{\lambda,\omega}+a_2(t,x)\zeta^+_{\lambda,\omega}=\G_{k_2}[\zeta^+_{\lambda,\omega}]+k_2(x,\theta,\omega)\varphi_\lambda^+(t,x,\omega)b_{a_2}(t,x,\omega) & \textrm{in }\,\, Q_T,\cr
\zeta^+_{\lambda,\omega}(0,x,\theta)=0& \textrm{in}\,\,Q,\cr
\zeta^+_{\lambda,\omega}(t,x,\theta)=0 & \textrm{on} \,\, \Sigma^-,
\end{array}
\right.
\end{equation}
where $\varphi_\lambda^+$ is given by \eqref{2.5}. Then we have 
\begin{equation} 
\lim_{h\to 1}\norm{\psi^+_{h,\lambda}-\zeta^+_{\lambda,\omega}}_{L^2(Q_T)}=0,
\end{equation}
Moreover, we get
\begin{equation} 
\lim_{h\to 1}\norm{\G_{k_2}[\psi^+_{h,\lambda}]-\G_{k_2}[\zeta^+_{\lambda,\omega}]}_{L^2(Q_T)}=0.
\end{equation}
\end{lemma}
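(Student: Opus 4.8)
The plan is to compare the two initial–boundary value problems directly and to use that, as $h\to1$, the kernel $\varrho_h(\omega,\cdot)$ behaves like an approximate identity on $\s$ concentrating at $\omega$. First I would put $w_{h,\lambda}:=\psi^+_{h,\lambda}-\zeta^+_{\lambda,\omega}$. Subtracting the problem defining $\zeta^+_{\lambda,\omega}$ from \eqref{3*.26}, and observing that both problems carry vanishing data at $t=0$ and a vanishing incoming trace on $\Sigma^-$, one sees that $w_{h,\lambda}$ solves the linear Boltzmann initial–boundary value problem associated with $a_2$, $k_2$, with zero initial data, zero incoming boundary data, and source term
\begin{equation*}
S_{h,\lambda}:=\G_{k_2}[\varphi_{h,\lambda}^+b_{a_2}]-k_2(\cdot,\cdot,\omega)\,\varphi_\lambda^+(\cdot,\cdot,\omega)\,b_{a_2}(\cdot,\cdot,\omega),
\end{equation*}
which, for each fixed $h\in(0,1)$ and $\lambda>0$, lies in $L^2(Q_T)$ (for the first term by \eqref{2.7}, and the second term is bounded since $k_2=\rho_2\kappa$ with $\kappa\in L^\infty(\s\times\s)$). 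Applying Lemma \ref{L.1.1} (estimate \eqref{1.5}) with $p=2$ to this problem yields a constant $C>0$ with $\norm{w_{h,\lambda}(t,\cdot,\cdot)}_{L^2(Q)}\le C\norm{S_{h,\lambda}}_{L^2(Q_T)}$ for every $t\in(0,T)$, hence $\norm{w_{h,\lambda}}_{L^2(Q_T)}\le C\norm{S_{h,\lambda}}_{L^2(Q_T)}$. So the whole statement reduces to proving $\norm{S_{h,\lambda}}_{L^2(Q_T)}\to 0$ as $h\to1$, with $\lambda>0$ fixed.

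For this reduction I would use the explicit form of $\varphi_{h,\lambda}^+$ to write, for a.e. $(t,x,\theta)\in Q_T$,
\begin{equation*}
\G_{k_2}[\varphi_{h,\lambda}^+b_{a_2}](t,x,\theta)=\int_\s k_2(x,\theta,\theta')\,\varrho_h(\omega,\theta')\,g_{t,x,\lambda}(\theta')\,d\theta',\qquad g_{t,x,\lambda}(\theta'):=\phi_1(x-t\theta')\,e^{i\lambda(t-x\cdot\theta')}\,b_{a_2}(t,x,\theta'),
\end{equation*}
where $g_{t,x,\lambda}$ is continuous on $\s$, $\norm{g_{t,x,\lambda}}_{L^\infty(\s)}\le\norm{\phi_1}_{L^\infty(\Rn)}e^{M_0T}$ uniformly in $(t,x)$ and $\lambda$, and $\varphi_\lambda^+(t,x,\omega)\,b_{a_2}(t,x,\omega)=g_{t,x,\lambda}(\omega)$. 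Using $\int_\s\varrho_h(\omega,\theta')\,d\theta'=1$ from Lemma \ref{L.4.3}, this gives
\begin{equation*}
S_{h,\lambda}(t,x,\theta)=\int_\s\big[k_2(x,\theta,\theta')\,g_{t,x,\lambda}(\theta')-k_2(x,\theta,\omega)\,g_{t,x,\lambda}(\omega)\big]\,\varrho_h(\omega,\theta')\,d\theta'.
\end{equation*}
Because $\varrho_h(\omega,\cdot)\ge0$, has unit mass, and (being the Poisson kernel, cf.\ \eqref{4.17}) concentrates at $\omega$ as $h\to1$, the mollification property \eqref{4.19}—extended to the Lebesgue points of the $L^1$ (here even $L^\infty$) function $\theta'\mapsto k_2(x,\theta,\theta')g_{t,x,\lambda}(\theta')$—shows that for a.e.\ $\omega\in\s$ the last integral tends to $0$ as $h\to1$ for a.e.\ $(t,x,\theta)\in Q_T$. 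Moreover, writing $k_2=\rho_2\kappa$ and using $\int_\s\varrho_h(\omega,\cdot)=1$ once more, one gets the $h$-uniform bound $\abs{S_{h,\lambda}}\le 2\norm{\phi_1}_{L^\infty(\Rn)}e^{M_0T}\norm{\rho_2}_{L^\infty(\Omega)}\norm{\kappa}_{L^\infty(\s\times\s)}$ on $Q_T$; since $Q_T$ has finite measure, dominated convergence gives $\norm{S_{h,\lambda}}_{L^2(Q_T)}\to0$, whence $\norm{\psi^+_{h,\lambda}-\zeta^+_{\lambda,\omega}}_{L^2(Q_T)}\to0$.

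The second assertion follows at once: by linearity of $\G_{k_2}$ in its argument, $\G_{k_2}[\psi^+_{h,\lambda}]-\G_{k_2}[\zeta^+_{\lambda,\omega}]=\G_{k_2}[w_{h,\lambda}]$, and the admissibility bounds defining $\km$ give, via the Cauchy–Schwarz inequality and Fubini exactly as in \eqref{42}, $\norm{\G_{k_2}[w_{h,\lambda}](t,\cdot,\cdot)}_{L^2(Q)}\le C\norm{w_{h,\lambda}(t,\cdot,\cdot)}_{L^2(Q)}$; integrating in $t$ and invoking the previous paragraph gives $\norm{\G_{k_2}[\psi^+_{h,\lambda}]-\G_{k_2}[\zeta^+_{\lambda,\omega}]}_{L^2(Q_T)}\le C\norm{w_{h,\lambda}}_{L^2(Q_T)}\to0$. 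The one genuinely delicate step is the middle paragraph: interchanging the limit $h\to1$ with the $\theta'$-integration when $k_2$ is only bounded—not continuous—in $\theta'$. This is precisely why one needs the Lebesgue-point refinement of \eqref{4.19} rather than \eqref{4.19} itself, and it is also the reason the conclusion is obtained for a.e.\ $\omega\in\s$, which is exactly the form needed to recover $\rho$ almost everywhere in $\Omega$.
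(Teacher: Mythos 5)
Your proof is correct and follows essentially the same route as the paper: the same difference $w_{h,\lambda}=\psi^+_{h,\lambda}-\zeta^+_{\lambda,\omega}$, the same a priori estimate from Lemma \ref{L.1.1} reducing everything to the $L^2(Q_T)$ convergence of the source term, and the same approximate-identity argument via $\varrho_h$. You are in fact more careful than the paper on the one delicate point: the paper invokes \eqref{4.19} directly for $\theta'\mapsto k_2(x,\theta,\theta')\varphi^+_\lambda(t,x,\theta')b_{a_2}(t,x,\theta')$ even though $k_2$ is only bounded (not continuous) in $\theta'$, whereas you correctly observe that a Lebesgue-point refinement is required and that the conclusion is then obtained for a.e.\ $\omega\in\s$, which still suffices for the identification of $\rho$ in Theorem \ref{t3}.
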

\begin{proof}
We denote $w_{h,\lambda}(t,x,\theta)=\psi_{h,\lambda}^+-\zeta^+_{\lambda,\omega}$, then we get
\begin{equation}\label{3.26}
\left\{
\begin{array}{ll}
\partial_t w_{h,\lambda}+\theta\cdot\nabla w_{h,\lambda}+a_2(t,x) w_{h,\lambda}=\G_{k_2}[w_{h,\lambda}]+\G_{k_2}[\varphi_{h,\lambda}^+b_{a_2}] - S_{\lambda,\omega}(t,x,\theta)
& \textrm{in }\,\, Q_T,\cr
w_{h,\lambda}(0,x,\theta)=0& \textrm{in}\,\,Q,\cr
w_{h,\lambda}(t,x,\theta)=0 & \textrm{on} \,\, \Sigma^-,
\end{array}
\right.
\end{equation}
where 
$$
S_{\lambda,\omega}(t,x,\theta)=k_2(x,\theta,\omega)\varphi_\lambda^+(t,x,\omega)b_{a_2}(t,x,\omega).
$$
Then we get by Lemma \ref{L.1.1}
\begin{equation*}
\norm{w_{h,\lambda}}_{L^2(Q_T)}\leq C\norm{\G_{k_2}[\varphi_{h,\lambda}^+b_{a_2}]-S_{\lambda,\omega}}_{L^2(Q_T)},
\end{equation*}
where $C$ is a constant depending only on $\Omega$, $T$.
Moreover, we have by \eqref{4.19}
$$
\lim_{h\to 1}\G_{k_2}[(\varphi_{h,\lambda}^+)b_{a_2}](t,x,\theta)=S_{\lambda,\omega}(t,x,\theta),\quad \text{in}\,L^2(\s).$$
We deduce that

\begin{equation*}
\lim_{h\to 1}\norm{\G_{k_2}[(\varphi_{h,\lambda}^+b_{a_2}]-S_{\lambda,\omega}}_{L^2(Q_T)}=0,
\end{equation*}
which implies that
\begin{equation*} 
\lim_{h\to 1}\norm{\psi^+_{h,\lambda}-\zeta^+_{\lambda,\omega}}_{L^2(Q_T)}=0.
\end{equation*}
This completes the proof of Lemma.\\
We prove by proceeding as the proof of Lemma \ref{l5.2} the flowing Lemma.

\end{proof}
\begin{lemma}\label{l55.2}
Let $\omega\in \s$ and $\psi^-_{h',\lambda}$ the unique solution to \eqref{3--.26}. Let $\zeta^-_{\lambda,\omega}$  be the solution of the following problem
\begin{equation}\label{33.26}
\left\{
\begin{array}{llll}
\partial_t\zeta^-+
\theta\cdot\nabla\zeta^- -a_1(t,x)\zeta^-=-\G_{k_1}^*[\zeta^-]-k_1(x,\theta,\omega)\varphi_\lambda^-(t,x,\omega)b_{a_1}(t,x,\omega) & \textrm{in }\,\, Q_T,\cr
\zeta^-(T,x,\theta)=0& \textrm{in}\,\,Q,\cr
\zeta^-(t,x,\theta)=0 & \textrm{on} \,\, \Sigma^+,
\end{array}
\right.
\end{equation}
where $\varphi_\lambda^-$ is given by (\ref{2.5}). Then we have
\begin{equation} 
\lim_{h'\to 1}\norm{\psi^-_{h',\lambda}-\zeta^-_{\lambda,\omega}}_{L^2(Q_T)}=0.
\end{equation}
Moreover, we get  
\begin{equation} 
\lim_{h'\to 1}\norm{\G_{k_1}^*[\psi^-_{h',\lambda}]-\G_{k_1}^*[\zeta^-_{\lambda,\omega}]}_{L^2(Q_T)}=0.
\end{equation}
\end{lemma}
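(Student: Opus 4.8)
The plan is to mirror, essentially line by line, the proof of Lemma~\ref{l5.2}, transposed to the backward adjoint transport problem. First I would set $w_{h',\lambda}=\psi^-_{h',\lambda}-\zeta^-_{\lambda,\omega}$ and subtract \eqref{33.26} from \eqref{3--.26}. Since both solutions carry the vanishing final condition at $t=T$ and the vanishing trace on $\Sigma^+$, the difference $w_{h',\lambda}$ solves
\begin{equation*}
\left\{
\begin{array}{ll}
\partial_t w_{h',\lambda}+\theta\cdot\nabla w_{h',\lambda}-a_1(t,x)w_{h',\lambda}=-\G_{k_1}^*[w_{h',\lambda}]-\big(\G_{k_1}^*[\varphi^-_{h',\lambda}b_{a_1}]-S^-_{\lambda,\omega}\big) & \textrm{in } Q_T, \cr
w_{h',\lambda}(T,x,\theta)=0 & \textrm{in } Q, \cr
w_{h',\lambda}(t,x,\theta)=0 & \textrm{on } \Sigma^+,
\end{array}
\right.
\end{equation*}
where $S^-_{\lambda,\omega}$ denotes the source term on the right-hand side of \eqref{33.26}. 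The time reversal $t\mapsto T-t$, $\theta\mapsto-\theta$ turns this into a forward transport problem of the type handled in Lemma~\ref{L.1.1} (this is the backward analogue already used in Lemma~\ref{LC2}), with zero initial and inflow data and source $\G_{k_1}^*[\varphi^-_{h',\lambda}b_{a_1}]-S^-_{\lambda,\omega}$. Estimate \eqref{1.5} then gives a constant $C>0$, depending only on $\Omega$ and $T$, such that
\begin{equation*}
\norm{w_{h',\lambda}}_{L^2(Q_T)}\leq C\,\norm{\G_{k_1}^*[\varphi^-_{h',\lambda}b_{a_1}]-S^-_{\lambda,\omega}}_{L^2(Q_T)}.
\end{equation*}

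Next I would let $h'\to 1$ in the right-hand side. Recalling that $\varphi^-_{h',\lambda}(t,x,\theta')=\varrho_{h'}(\omega,\theta')\phi_2(x-t\theta')e^{-i\lambda(t-x\cdot\theta')}$ and using the definition of $\G_{k_1}^*$,
\begin{equation*}
\G_{k_1}^*[\varphi^-_{h',\lambda}b_{a_1}](t,x,\theta)=\int_{\s}\varrho_{h'}(\omega,\theta')\,k_1(x,\theta',\theta)\,\phi_2(x-t\theta')\,e^{-i\lambda(t-x\cdot\theta')}\,b_{a_1}(t,x,\theta')\,d\theta',
\end{equation*}
which is the Poisson mean at $h'\omega$ of $\theta'\mapsto k_1(x,\theta',\theta)\phi_2(x-t\theta')e^{-i\lambda(t-x\cdot\theta')}b_{a_1}(t,x,\theta')$. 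For a.e.\ fixed $(t,x)\in\Omega_T$ this map lies in $L^2(\s\times\s)$ in $(\theta,\theta')$, so by the approximate-identity property \eqref{4.19} of $\varrho_{h'}$ the mean converges, in $L^2(\s)$ with respect to $\theta$, to $S^-_{\lambda,\omega}(t,x,\cdot)$ as $h'\to 1$. Combining the normalization $\norm{\varrho_{h'}(\omega,\cdot)}_{L^1(\s)}=1$ from \eqref{4.18}, a Cauchy--Schwarz splitting $\varrho_{h'}=\varrho_{h'}^{1/2}\cdot\varrho_{h'}^{1/2}$, and the uniform bounds $k_1\in\km$ together with those on $\phi_2$ and $b_{a_1}$, one dominates $\norm{\G_{k_1}^*[\varphi^-_{h',\lambda}b_{a_1}](t,x,\cdot)}_{L^2(\s)}^2$ by an $L^1(\Omega_T)$ function independent of $h'$, and the dominated convergence theorem yields $\norm{\G_{k_1}^*[\varphi^-_{h',\lambda}b_{a_1}]-S^-_{\lambda,\omega}}_{L^2(Q_T)}\to 0$, hence $\norm{\psi^-_{h',\lambda}-\zeta^-_{\lambda,\omega}}_{L^2(Q_T)}\to 0$. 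The second assertion then follows at once: the Cauchy--Schwarz and Fubini estimate that produced \eqref{42}, applied to the adjoint operator $\G_{k_1}^*$ (whose kernel $k_1(x,\theta',\theta)$ obeys the integral bounds in the definition of $\km$), gives $\norm{\G_{k_1}^*[\psi^-_{h',\lambda}]-\G_{k_1}^*[\zeta^-_{\lambda,\omega}]}_{L^2(Q_T)}\leq C\,\norm{\psi^-_{h',\lambda}-\zeta^-_{\lambda,\omega}}_{L^2(Q_T)}\to 0$.

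The step I expect to be the main obstacle is precisely the passage $h'\to 1$ inside $\G_{k_1}^*$. Since $k_1$ is only $L^\infty$ in $x$ with $L^2$-regularity in the angular variables, and is not continuous, one cannot invoke \eqref{4.19} pointwise in $\theta'$ against a continuous integrand; one must use instead the $L^2(\s)$-version of the convergence of Poisson means (legitimate because, for a.e.\ $(t,x)$, the relevant integrand is square-integrable in the angular variables) and then upgrade this pointwise-in-$(t,x)$ convergence to convergence in $L^2(Q_T)$ by dominated convergence. Producing an honestly $h'$-independent majorant of $(t,x)\mapsto\norm{\G_{k_1}^*[\varphi^-_{h',\lambda}b_{a_1}](t,x,\cdot)}_{L^2(\s)}$ on $\Omega_T$, despite the fact that $\varrho_{h'}$ itself is not uniformly bounded as $h'\to 1$ (see \eqref{4.17}), is the delicate bookkeeping: it relies on the Cauchy--Schwarz splitting above, the mass normalization \eqref{4.18}, and the membership $k_1\in\km$ (which controls the angular $L^2$-norm of $k_1(x,\cdot,\cdot)$ uniformly in $x$). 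Everything else is parallel to Lemma~\ref{l5.2} and to the energy estimates already established in Section~\ref{Sec2}.
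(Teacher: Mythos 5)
Your proposal is correct and follows essentially the same route as the paper, which itself disposes of this lemma by remarking that one "proceeds as in the proof of Lemma \ref{l5.2}": form the difference $w_{h',\lambda}=\psi^-_{h',\lambda}-\zeta^-_{\lambda,\omega}$, bound it via the a priori estimate of Lemma \ref{L.1.1} (after time reversal, as in Lemma \ref{LC2}) by the $L^2(Q_T)$-norm of the difference of the source terms, and send $h'\to 1$ using the Poisson-kernel properties of Lemma \ref{L.4.3}; the second limit then follows from the $L^2$-boundedness of $\G_{k_1}^*$ on $\km$. Your extra care about justifying the limit $h'\to1$ inside $\G_{k_1}^*$ (the $L^2$ rather than pointwise version of \eqref{4.19}, plus a dominated-convergence majorant) addresses a point the paper passes over silently, and is a welcome refinement rather than a deviation.
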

\begin{lemma}\label{5.3}
Let $\mathcal{G}_{\lambda,h,h'}$ and $I_{\lambda,h,h'},$ are given respectively by \eqref{G} and \eqref{II}. Then, we have 
$$\displaystyle \lim_{\underset{h'\to 1}{h \to 1}} \mathcal{G}_{\lambda,h,h'}(\omega)= \mathcal{G} (\omega), \quad \forall\omega\in \s, $$
and
$$\displaystyle \lim_{\underset{h'\to 1}{h \to 1}} I_{\lambda,h,h'}(\omega)= I_{\lambda} (\omega), \quad \forall\omega\in \s, $$
where $\mathcal{G}$ and $I_\lambda$ are respectively given by 
\begin{equation}
\begin{split}
\mathcal{G}(\omega)=\int_0^T\int_\Omega k(x,\omega,\omega)\phi_1(x-t\omega)\phi_2(x-t\omega)dxdt,
\end{split}\label{4.31}
\end{equation}

\begin{multline}\label{IL} 
I_\lambda(\omega)=\int_0^T\int_\Omega \left( \G_{k}[\zeta^+_{\lambda,\omega}]\phi_2(x-t\omega)b_{-a_1}e^{-i\lambda(t-\omega\cdot x)}+\G^*_{k}[\zeta^-_{\lambda,\omega}]\phi_1(x-t\omega)b_{a_2}e^{i\lambda(t-\omega\cdot x)}\right) dx\,dt\\+\int_{Q_T}\G_{k}[\zeta^+_{\lambda,\omega}]\zeta^-_{\lambda,\omega}d\theta \,dx\,dt.
\end{multline}

\end{lemma}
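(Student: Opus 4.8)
The plan is to establish the two limits separately, peeling off one Poisson mollifier at a time by means of the approximate-identity properties \eqref{4.17}--\eqref{4.19} of Lemma \ref{L.4.3}. Two preliminary observations organise the argument. First, since $\A_{a_1,k_1}=\A_{a_2,k_2}$ and $(a_j,k_j)\in\mathcal{A}^*(M_0)\times\km$, Corollary \ref{c} forces $a_1=a_2$ in $\overline{\Omega}_T$, so that $a\equiv 0$ and $b_{a_2}(t,x,\theta)b_{-a_1}(t,x,\theta)=1$ as soon as the two angular arguments coincide. Second, by Cauchy--Schwarz --- the estimate behind \eqref{42} --- one has $\norm{\G_k[u](t,\cdot,\cdot)}_{L^2(Q)}\leq\sqrt{M_1M_2}\,\norm{u(t,\cdot,\cdot)}_{L^2(Q)}$, so the convergence $\psi^+_{h,\lambda}\to\zeta^+_{\lambda,\omega}$ in $L^2(Q_T)$ (Lemma \ref{l5.2}) yields $\G_k[\psi^+_{h,\lambda}]\to\G_k[\zeta^+_{\lambda,\omega}]$ in $L^2(Q_T)$, and likewise $\psi^-_{h',\lambda}\to\zeta^-_{\lambda,\omega}$ in $L^2(Q_T)$ (Lemma \ref{l55.2}); here the $\zeta^\pm_{\lambda,\omega}$ are well defined and lie in $L^2(Q_T)$ by Lemma \ref{L.1.1}. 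Finally, the ``$+$'' objects depend only on $h$ and the ``$-$'' objects only on $h'$, so the double limit decouples and can be taken iteratively in either order.

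For the first assertion I would start from \eqref{G}, substitute the expressions of $\G_k[\varphi^+_{h,\lambda}b_{a_2}]$ and $\varphi^-_{h',\lambda}$, and use Fubini to write
\[
\mathcal{G}_{\lambda,h,h'}(\omega)=\int_{(0,T)\times\Omega}\int_{\s}\varrho_{h'}(\omega,\theta)\left(\int_{\s}k(x,\theta,\theta')\,\varrho_h(\omega,\theta')\,G(t,x,\theta,\theta')\,d\theta'\right)d\theta\,dx\,dt,
\]
where $G(t,x,\theta,\theta')=\phi_1(x-t\theta')\phi_2(x-t\theta)\,e^{i\lambda(t-x\cdot\theta')}e^{-i\lambda(t-x\cdot\theta)}\,b_{a_2}(t,x,\theta')b_{-a_1}(t,x,\theta)$ is continuous and uniformly bounded. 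For fixed $(t,x,\theta)$ the map $\theta'\mapsto k(x,\theta,\theta')\,G(t,x,\theta,\theta')$ is continuous, so \eqref{4.19} lets the inner integral converge, as $h\to1$, to $k(x,\theta,\omega)\,G(t,x,\theta,\omega)$, while \eqref{4.17}--\eqref{4.18} keep it bounded by a fixed constant; the same applied to $\varrho_{h'}$ sends $\theta\to\omega$, and since $|(0,T)\times\Omega|<\infty$ dominated convergence carries both limits past the $dx\,dt$-integral. At $\theta=\theta'=\omega$ the exponentials cancel and $b_{a_2}b_{-a_1}=1$, which leaves exactly $\mathcal{G}(\omega)$ of \eqref{4.31}.

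For the second assertion I would treat, one at a time, the three integrals on the right of \eqref{5.78}, i.e.\ the three summands of \eqref{II}. In each of them the mollifier carried by a $\psi$-factor is sent to $1$ first: using Lemma \ref{l5.2} (as $h\to1$), Lemma \ref{l55.2} (as $h'\to1$), the $L^2$-boundedness of $\G_k$, and the fact that the paired factor lies in $L^2(Q_T)$ for each fixed value of the parameter not yet sent to $1$, every such integral converges, which already settles the third summand, with limit $\int_{Q_T}\G_k[\zeta^+_{\lambda,\omega}]\zeta^-_{\lambda,\omega}$. In the first summand the surviving mollifier sits inside $\varphi^-_{h',\lambda}=\varrho_{h'}(\omega,\theta)\phi_2(x-t\theta)e^{-i\lambda(t-x\cdot\theta)}$; since $\theta\mapsto\G_k[\zeta^+_{\lambda,\omega}](t,x,\theta)$ is continuous and $\norm{\G_k[\zeta^+_{\lambda,\omega}](t,x,\cdot)}_{L^\infty(\s)}\leq C\norm{\zeta^+_{\lambda,\omega}(t,x,\cdot)}_{L^2(\s)}\in L^1((0,T)\times\Omega)$, \eqref{4.19} and dominated convergence produce the first integral in \eqref{IL}. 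In the second summand I would, after the $h'\to1$ step, move $\G_k$ onto $\zeta^-_{\lambda,\omega}$ by the adjoint identity $\int_{Q_T}\G_k[u]\,v=\int_{Q_T}u\,\G^*_k[v]$ already used in the paper, rewriting it as $\int_{Q_T}\varrho_h(\omega,\theta)\phi_1(x-t\theta)e^{i\lambda(t-x\cdot\theta)}b_{a_2}(t,x,\theta)\,\G^*_k[\zeta^-_{\lambda,\omega}](t,x,\theta)\,d\theta\,dx\,dt$, and then let $h\to1$ by the same mechanism to obtain the second integral in \eqref{IL}. Summing the three limits yields $I_\lambda(\omega)$.

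The step I expect to be the main obstacle is the rigorous justification of these limit--integral interchanges, since $\varrho_h(\omega,\cdot)$ is bounded neither in $L^\infty(\s)$ nor in $L^2(\s)$ as $h\to1$. What makes the argument work is that $\varrho_h(\omega,\cdot)\geq 0$ with $\norm{\varrho_h(\omega,\cdot)}_{L^1(\s)}=1$, whence $\big|\int_{\s}\varrho_h(\omega,\theta)g(\theta)\,d\theta\big|\leq\norm{g}_{L^\infty(\s)}$ for every angular profile $g$ that occurs --- each of which is continuous and uniformly bounded, so that \eqref{4.19} applies \emph{pointwise} in $\omega$, as the statement ``$\forall\,\omega\in\s$'' demands. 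Continuity in $\theta$ of $\G_k[\,\cdot\,](t,x,\cdot)$ and $\G^*_k[\,\cdot\,](t,x,\cdot)$ evaluated on the $L^2$-in-$\theta$ densities $\psi^\pm_{h,\lambda},\zeta^\pm_{\lambda,\omega}$ is exactly where the regularity of the collision profile $\kappa$ --- implicit in the hypothesis $\kappa(\theta,\theta)\neq 0$ for all $\theta$ --- is used. In practice one must keep the $\varrho$-factors inside $\G_k$ until the very last step, as in Lemma \ref{L2.1}, rather than estimating $\varphi^\pm_{h,\lambda}$ directly in $L^2(Q_T)$, since $\norm{\varrho_h(\omega,\cdot)}_{L^2(\s)}\to\infty$.
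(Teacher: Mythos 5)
Your proposal is correct and follows essentially the same route as the paper: substitute the explicit forms of $\varphi^{\pm}_{h,\lambda}$, use the approximate-identity properties \eqref{4.17}--\eqref{4.19} of the Poisson kernel together with dominated convergence to pass $h\to1$ and then $h'\to1$, invoke Lemmas \ref{l5.2} and \ref{l55.2} for the convergence $\psi^{\pm}\to\zeta^{\pm}$ in the three remainder integrals, and use $a_1=a_2$ (from Corollary \ref{c}) to cancel the $b$-factors in the limit of $\mathcal{G}_{\lambda,h,h'}$. The only cosmetic difference is that you apply the adjoint identity to the second remainder term before taking $h\to1$, whereas the paper takes the limit directly and obtains the same expression.
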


\begin{proof}
By replacing $\varphi_{h,\lambda}^+$ and $\varphi_{h',\lambda}^-$  and by using (\ref{4.17}) and  \eqref{4.18}, we obtain

$$
\vert \G_{k}[\varphi_{h,\lambda}^+b_{a_2}](t,x,\theta) \varphi_{h',\lambda}^-(t,x,\theta)b_{-a_1}(t,x,\theta)\vert\leq C (1-h')^{1-n},$$
where $C$ depend on $n,$ $T$ and  $M_i$, $ i=0,1,2.$  
Therefore, by using (\ref{4.19}) and Lebesgue's Theorem, one gets
\begin{equation}
\lim_{h\to 1}\int_{Q_T}\G_{k}[\varphi_{h,\lambda}^+b_{a_2}](t,x,\theta) \varrho_{h'}(\omega,\theta)\phi_2(x-t\theta)e^{ -i\lambda(t-x\cdot\theta)} b_{-a_1}(t,x,\theta)d\theta\,dx\,dt\\=\mathcal{G}_{\lambda,h'}(\omega).
\end{equation}
where
$$\mathcal{G}_{\lambda,h'}(\omega)=\int_{Q_T}k(x,\theta,\omega)\phi_1(x-t\omega)e^{ -i\lambda x\cdot\omega} b_{a_2}(t,x,\omega) \varrho_{h'}(\omega,\theta)\phi_2(x-t\theta)e^{ i\lambda x\cdot\theta}b_{-a_1}(t,x,\theta)d\theta\,dx\,dt .$$
By a similar way, we obtain that
$$\lim_{h'\to 1}\mathcal{G}_{\lambda,h'}(\omega)=\mathcal{G}(\omega),\quad \;\forall\omega\in\s,$$
where we have used $a_2=a_1.$\\We may write $I_{\lambda,h,h'}(\omega)$ as $I_{\lambda,h,h'}(\omega)=I^1_{\lambda,h,h'}(\omega)+I^2_{\lambda,h,h'}(\omega)+I^3_{\lambda,h,h'}(\omega),$ where 
\begin{align*}
I^1_{\lambda,h,h'}(\omega)=&\int_{Q_T} \G_{k}[\psi_{h,\lambda}^{+}](t,x,\theta)\varphi_{h',\lambda}^-(t,x,\theta) b_{-a_1}(t,x,\theta)d\theta\,dx\,dt\cr
=&\int_{Q_T} \G_{k'}[\psi_{h,\lambda}^{+}](t,x,\theta)\varrho_{h'}(\omega,\theta)\phi_2(x-t\theta)e^{ -i\lambda(t-x\cdot\theta)} b_{-a_1}(t,x,\theta)d\theta\,dx\,dt,\\
I^2_{\lambda,h,h'}(\omega)=&\int_{Q_T} \G_{k}[\varphi_{h,\lambda}^+b_{a_2}](t,x,\theta)\psi_{h',\lambda}^{-}(t,x,\theta)d\theta\,dx\,dt\\
=&\int_{Q_T} \G_{k}[\varrho_h(\omega,\theta)\phi_1(x-t\theta)e^{ i\lambda(t-x\cdot\theta)}b_{a_2}](t,x,\theta)\psi_{h',\lambda}^{-}(t,x,\theta)d\theta\,dx\,dt,\\
I^3_{\lambda,h,h'}(\omega)=&\int_{Q_T} \G_{k}[\psi_{h,\lambda}^{+}](t,x,\theta)\psi_{h',\lambda}^{-}(t,x,\theta)d\theta\,dx\,dt.
\end{align*}
Taking the limit as $h\to 1$ in the above expressions, we get from Lemma \ref{l5.2}  and \eqref{4.19} 
\begin{align*}
\lim_{h\to 1} I^1_{\lambda,h,h'}(\omega)=&\int_{Q_T} \G_{k}[\zeta^+_{\lambda,\omega}](t,x,\theta)\varrho_{h'}(\omega,\theta)\phi_2(x-t\theta)e^{ -i\lambda(t-x\cdot\theta)}b_{-a_1}(t,x,\theta)d\theta\,dx\,dt,\\
\lim_{h\to 1} I^2_{\lambda,h,h'}(\omega)=&\int_{Q_T} k(x,\theta,\omega)\phi_1(x-t\omega)e^{ i\lambda(t-x\cdot\omega)} b_{a_2}(t,x,\omega)\psi_{h',\lambda}^{-}(t,x,\theta)d\theta\,dx\,dt, \\
\lim_{h\to 1} I^3_{\lambda,h,h'}(\omega)=&\int_{Q_T} \G_{k}[\zeta_{\lambda,\omega}^{+}](t,x,\theta)\psi_{h',\lambda}^{-}(t,x,\theta)d\theta\,dx\,dt.
\end{align*}
We repeat the same procedure with $h'\to 1$ to obtain  $\displaystyle\lim_{\underset{h'\to 1}{h \to 1}} I_{\lambda,h,h'}(\omega)= I_{\lambda} (\omega), \,\text{for any}\,\omega\in \s.$

This completes the proof of the lemma.
\end{proof}
\begin{lemma}\label{5.4}
Let  $I_\lambda$ given by \eqref{IL}. Then, we have  
$$\lim_{\lambda\to \infty}I_\lambda(\omega)=0,\quad\forall\omega\in\s.$$
\end{lemma}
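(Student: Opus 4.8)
The plan is to prove that each of the three integrals constituting $I_\lambda(\omega)$ in \eqref{IL} vanishes as $\lambda\to\infty$, the common engine being that $\zeta^{+}_{\lambda,\omega}$ and $\zeta^{-}_{\lambda,\omega}$ carry the oscillating factors $e^{\pm i\lambda(t-x\cdot\omega)}$ inherited from their source terms and therefore ``average out''. Concretely, I would first isolate the auxiliary fact that
\begin{equation*}
\lim_{\lambda\to\infty}\zeta^{\pm}_{\lambda,\omega}(t,x,\theta)=0\ \text{ for a.e. }(t,x,\theta)\in Q_T,\qquad |\zeta^{\pm}_{\lambda,\omega}|\le g\ \text{ for some }\lambda\text{-independent }g\in L^{2}(Q_T),
\end{equation*}
which, since the diagonal set $\{\theta=\omega\}$ is $Q_T$-null, gives $\|\zeta^{\pm}_{\lambda,\omega}\|_{L^{2}(Q_T)}\to0$ by dominated convergence (in the product case $k_j=\rho_j\kappa$ of Theorem \ref{t3}, with $\kappa\in L^{\infty}(\s\times\s)$, all the kernels are genuinely bounded, so one may even take $g$ constant and all the dominated-convergence steps below are immediate).

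To establish this I would combine Duhamel's formula with the Neumann expansion of the solution operator in the collision term. Writing $L_{0}=\p_{t}+\theta\cdot\nabla+a_{2}$, the function $\zeta^{+}_{\lambda,\omega}$ solves $L_{0}\zeta^{+}_{\lambda,\omega}=\G_{k_2}[\zeta^{+}_{\lambda,\omega}]+S^{+}_{\lambda,\omega}$ with $S^{+}_{\lambda,\omega}(t,x,\theta)=k_{2}(x,\theta,\omega)\,\varphi^{+}_{\lambda}(t,x,\omega)\,b_{a_2}(t,x,\omega)$ and zero initial and incoming data, hence
\begin{equation*}
\zeta^{+}_{\lambda,\omega}=\sum_{n\ge0}\big(L_{0}^{-1}\G_{k_2}\big)^{n}L_{0}^{-1}S^{+}_{\lambda,\omega},
\end{equation*}
a series converging in $\mathcal{C}([0,T];L^{2}(Q))$ uniformly in $\lambda$ because of the Volterra structure of $L_{0}^{-1}$ on $(0,T)$ and of the uniform bound $\|S^{+}_{\lambda,\omega}\|_{L^{2}(Q_T)}\le C$. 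Solving $L_{0}$ along characteristics, its leading term is
\begin{equation*}
L_{0}^{-1}S^{+}_{\lambda,\omega}(t,x,\theta)=e^{i\lambda(t\,\theta\cdot\omega-x\cdot\omega)}\int_{0}^{t}m_{t,x,\theta}(s)\,e^{i\lambda s(1-\theta\cdot\omega)}\,ds,
\end{equation*}
where $m_{t,x,\theta}(s)=e^{-\int_{s}^{t}a_{2}(\tau,x-(t-\tau)\theta)\,d\tau}\,k_{2}(x-(t-s)\theta,\theta,\omega)\,\varphi^{+}(s,x-(t-s)\theta,\omega)\,b_{a_2}(s,x-(t-s)\theta,\omega)$ lies in $L^{1}((0,t))$; since $1-\theta\cdot\omega>0$ whenever $\theta\ne\omega$, the Riemann--Lebesgue lemma in the $s$-variable forces $L_{0}^{-1}S^{+}_{\lambda,\omega}(t,x,\theta)\to0$ for a.e.\ $(t,x,\theta)$, with $\lambda$-independent majorant $C\int_{0}^{t}|k_{2}(x-(t-s)\theta,\theta,\omega)|\,ds$. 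Feeding this into $\G_{k_2}$ and then $L_{0}^{-1}$ and iterating, every term of the series is bounded uniformly in $\lambda$ by the $n$-th term of a convergent numerical series and tends to $0$ a.e., by dominated convergence applied first inside the $\theta'$-integral defining $\G_{k_2}$ and then along the characteristics; summing over $n$, permissible by the uniform summability, yields the claimed a.e.\ decay, and collecting the majorants produces $g$. The backward adjoint problem \eqref{33.26} for $\zeta^{-}_{\lambda,\omega}$ is handled identically, with phase $e^{-i\lambda(t-x\cdot\omega)}$ and $\G^{*}_{k_1}$ in place of $\G_{k_2}$.

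It then remains to estimate the three pieces of \eqref{IL}. For the third, Cauchy--Schwarz and the $L^{2}$-continuity of $\G_{k}$ coming from the bounds defining $\km$ (as in the proof of Lemma \ref{L2.1}) give
\begin{equation*}
\Big|\int_{Q_T}\G_{k}[\zeta^{+}_{\lambda,\omega}]\,\zeta^{-}_{\lambda,\omega}\,d\theta\,dx\,dt\Big|\le C\,\|\zeta^{+}_{\lambda,\omega}\|_{L^{2}(Q_T)}\,\|\zeta^{-}_{\lambda,\omega}\|_{L^{2}(Q_T)}\longrightarrow0 .
\end{equation*}
For the first, $\G_{k}[\zeta^{+}_{\lambda,\omega}](t,x,\omega)=\int_{\s}k(x,\omega,\theta')\,\zeta^{+}_{\lambda,\omega}(t,x,\theta')\,d\theta'\to0$ for a.e.\ $(t,x)$ by dominated convergence in $\theta'$ (using the a.e.\ decay of $\zeta^{+}_{\lambda,\omega}$, the bound $|\zeta^{+}_{\lambda,\omega}|\le g$ and $\int_{\s}|k(x,\omega,\theta')|\,d\theta'\le 2M_{1}$), and it has a $\lambda$-independent majorant in $L^{1}(\Omega_T)$; multiplying by the modulus-$\le C$ factor $\phi_{2}(x-t\omega)\,b_{-a_1}(t,x,\omega)\,e^{-i\lambda(t-\omega\cdot x)}$ and integrating over the finite-measure set $\Omega_T$, a last dominated-convergence argument makes this integral tend to $0$; the second integral is identical with $\G^{*}_{k}$, $\zeta^{-}_{\lambda,\omega}$ and the conjugate phase. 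Adding the three estimates gives $\lim_{\lambda\to\infty}I_\lambda(\omega)=0$ for every $\omega\in\s$.

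The step I expect to be the genuine obstacle is the auxiliary decay of $\zeta^{\pm}_{\lambda,\omega}$: the oscillatory source decays by Riemann--Lebesgue, but the collision operator couples all directions and is not a small perturbation, so the decay has to be transported through the collisions via the Volterra/Neumann structure while keeping a $\lambda$-uniform majorant, and the diagonal $\{\theta=\omega\}$, where no oscillation and hence no decay is available, must be discarded as a null set.
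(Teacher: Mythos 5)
Your proof is correct, but it reaches the conclusion by a genuinely different mechanism than the paper. The paper's own argument is soft: it asserts that $\zeta^{\pm}_{\lambda,\omega}\rightharpoonup 0$ weakly in $L^2$ (which follows from the weak nullity of the oscillating source $k_2(x,\theta,\omega)\varphi^{+}_{\lambda}(t,x,\omega)b_{a_2}(t,x,\omega)$ together with the weak--weak continuity of the bounded solution operator of Lemma \ref{L.1.1}), and then invokes the compactness of the angular integral operator $\G_k$ on $L^2(\s)$ --- the same device as in Lemma \ref{L2.1} --- to upgrade this to \emph{strong} convergence of $\G_k[\zeta^{+}_{\lambda,\omega}]$ and $\G^{*}_k[\zeta^{-}_{\lambda,\omega}]$ in $L^2(Q_T)$; the three terms of $I_\lambda$ then die by Cauchy--Schwarz. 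You instead prove the strictly stronger statement that $\zeta^{\pm}_{\lambda,\omega}$ themselves tend to $0$ strongly in $L^2(Q_T)$, by unfolding the Duhamel/Neumann series of the Volterra solution operator and applying Riemann--Lebesgue along each characteristic, where the phase $e^{i\lambda s(1-\theta\cdot\omega)}$ is non-degenerate off the null diagonal $\{\theta=\omega\}$. This is more constructive and makes explicit two points the paper leaves implicit (why the $\zeta^{\pm}_{\lambda,\omega}$ are weakly null, and why the diagonal causes no harm), at the price of needing $\lambda$-uniform pointwise majorants to push the a.e.\ decay through the collision iterates; as you note, these majorants are immediate under the product hypothesis $k_j=\rho_j\kappa$ with $\kappa\in L^\infty(\s\times\s)$ of Theorem \ref{t3}, which is the only setting in which this lemma is used, so the restriction is harmless. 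One cosmetic remark: you do not actually need a.e.\ convergence of the \emph{summed} series --- $L^2$ convergence of each term plus the uniform-in-$\lambda$ smallness of the tail already gives $\|\zeta^{\pm}_{\lambda,\omega}\|_{L^2(Q_T)}\to 0$, and all three estimates on $I_\lambda$ can then be closed with Cauchy--Schwarz alone, bypassing the final dominated-convergence passes.
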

\begin{proof}
By using the fact that $\zeta_{\lambda,\omega}^{\pm}\rightharpoonup 0$ in $L^2(\Omega)$ and that the integral operator $\G_{k}$ is compact, we get
\begin{equation*}
\lim_{\lambda\to\infty}\norm{\G_{k}[\zeta_{\lambda,\omega}^{+}](t,\cdot,\theta)}_{L^2(\Omega)}=\lim_{\lambda\to\infty}\norm{\G_{k}^*[\zeta_{\lambda,\omega}^{-}](t,\cdot,\theta)}_{L^2(\Omega)}=0, \quad \forall t\in (0,T),\; \theta\in \s .
\end{equation*}
The Lebesgue Theorem implies that
\begin{equation*}
\lim_{\lambda\to\infty}\norm{\G_{k}[\zeta_{\lambda,\omega}^{+}]}_{L^2(Q_T)}=\lim_{\lambda\to\infty}\norm{\G_{k}^*[\zeta_{\lambda,\omega}^{-}]}_{L^2(Q_T)}=0.
\end{equation*}
This complete the proof.
\end{proof}
We are now in position to prove our main result.

Since we have by (\ref{5.78}), that $\mathcal{G}_{\lambda,h,h'} (\omega ) = I_{\lambda,h,h'} (\omega )$ for all $\omega\in \s$, we take the limit in $h , h'$ and $\lambda$ and we get, from Lemmas \ref{5.3} and \ref{5.4},  that

$$ \int_0^T\int_{\R^n} k(x,\omega,\omega) \phi_1(x-t\omega)\phi_2(x-t\omega)dxdt=0.$$
where $k$ is extended by $0$ outside   $\Omega.$  Since $k_j(x,\omega,\omega)=\rho_j(x)\kappa(\omega,\omega)$, $j=1,2,$ where $\kappa\in L^\infty(\s \times\s)$ and $\kappa(\omega,\omega)\neq 0$ a.e on $\s$.
Using the change of variables, $y=x-t\omega,$  for $\rho=\rho_2-\rho_1$, we get 
$$\int_{0}^T\int_{\R^n}   
\rho(y+t\omega)\kappa(\omega,\omega)
\phi_1(y)\phi_2(y)dydt=0,\,\,\forall\,
\phi_1,\phi_2\in \C_0^\infty(\mathcal{B}_r),\,\,\forall\,\omega\in \s.$$
Put
$\phi_1(x)=\phi_2(x)$, we obtain
\begin{equation}\label{0258}
\int_{0}^T\int_{\R^n}
\rho(y+t\omega)\kappa(\omega,\omega)
\phi_1^2(y)dydt=0,\,\,\forall\,
\phi_1\in \C_0^\infty(\mathcal{B}_r),\,\,\forall\,\omega\in \s.
\end{equation}
Now, we consider a non negative function   $\phi_0\in \C_0^\infty(B(0,1))$
such that $ \Vert \phi_0 \Vert_{L^2(\R^n)} = 1$. We define
$$\phi_h(x)=h^{-n/2}\phi_0\left( \frac{x-y}{h}\right) , $$ 
where $y\in \mathcal{B}_r$. Then, for $h >0$ sufficiently small we obtain
$$
\textrm{Supp} \, \phi_h\cap\Omega=\emptyset.
\quad (\textrm{Supp}\, \phi_h \pm T\theta)\cap\Omega=\emptyset.
$$
We rewrite (\ref{0258}) with $\phi_1=\phi_h$, we deduce that


$$\lim_{h\mapsto 0}\displaystyle\int_{\R^{n}}\phi^{2}_h(x)\kappa(\omega,\omega)\para{\displaystyle\int_{0}^{T}
\rho(x+t\omega)\,dt}\,dx=\kappa(\omega,\omega)\displaystyle\int_{0}^{T}\rho(y+t\omega)\,dt
=0,\,\,\,\,
\forall\,\omega\in \s,\;\forall y\in \mathcal{B}_r.$$
Using the fact that $\kappa(\omega,\omega)\neq 0$, we obtain
 $$\displaystyle\int_{0}^{T}\rho(y+t\omega)\,dt
=0,\,\,\,\,
\forall\,\omega\in \s,\;\forall y\in \mathcal{B}_r.$$
Then we have, that
$$\int_{-T}^{T}\rho(y+t\omega)\,dt=0,\,\,\,\,
\forall\,\omega\in \s,\;\forall y\in \mathcal{B}_r.$$
Using that $T>2r$ and $\mbox{Supp}(\rho)\subset B(0,r/2)$, we can find
 \begin{equation}\label{520}
 \int_{\R}\rho(y+t\omega)\,dt=0,\,\,\,\,
\forall\,\omega\in \s,\;\forall y\in \R^n.
\end{equation}
We now turn our attention to the Fourier transform of $\rho$. Let $\xi\in \omega^\perp$. In light of (\ref{520}), we get
$$\int_{\omega^\perp}\int_{\R}e^{-ix\cdot\xi}\rho(y+t\omega)\,dt\,d\sigma=0,\,\,\,\,
\forall\,\omega\in \s, \;\forall y\in \R^n.$$
where $d\sigma$ is the $(n-1)$-dimensional standard volume on $\omega^{\perp}$.
By the change of variable  $x=y+t\omega\in \omega^{\perp}\oplus\R \omega=\R^{n},\,\,dy=dt\,d\sigma,$  we have

$$\widehat{\rho}(\xi)=\int_{\R^n}e^{-ix\cdot \xi }\rho(x)dx=0,\quad \forall\xi  \in \omega^{\perp},\; \forall \omega\in \s.$$
Hence, by the injectivity of the Fourier transform we get the desired
result.
This ends the proof.
\appendix\label{AA}
\section{}
\setcounter{equation}{0}
\begin{lemma}\label{A1}
Let $p\geqslant 1$ and $(a,k)\in \F$, $v\in L^p(Q_T)$ and $u_0\in L^p(Q)$. We consider the
 following problem
\begin{equation}
\left\{
  \begin{array}{ll}
\partial_t u+\theta\cdot\nabla u+a(t,x) u=\G_{k}[u]+v& \mbox{in}\quad(0,T)\times Q, \\      
 u(0,x,\theta)=u_0& \mbox{in}\quad \s\times\Omega,\\
u(t,x,\theta)=f(t,x,\theta)& \mbox{on}\quad  \Sigma^-,\\
\end{array}
\right.\label{r1}
\end{equation}
where $f\in \mathscr{L}^-_p(\Sigma^-)$. Then, there is a constant $C>0$ such that
\begin{equation}\label{1.599}
\norm{u(t,\cdot,\cdot)}_{L^p(Q)}+\Vert u \Vert _{\mathscr{L}^{+}_p(\Sigma^{+})}\leq C(\norm{u_0}_{L^p(Q)}+ \Vert f \Vert _{\mathscr{L}^{-}_p(\Sigma^{-})}+\norm{v}_{L^p(Q_T)}),\quad \forall\,t\in (0,T).
\end{equation}
\end{lemma}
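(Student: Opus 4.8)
I would establish \eqref{1.599} as an a priori $L^p$ energy estimate. The existence and uniqueness of a solution $u$ in the natural class $\mathcal{C}^1([0,T];L^p(Q))\cap\mathcal{C}([0,T];W_p)$ is obtained, as in \cite{[18]}, by the method of characteristics combined with a contraction argument on a short time interval iterated over $(0,T)$, the integral operator $\G_k$ being treated as a bounded perturbation of the free transport generator by means of the Schur-type bound $\norm{\G_k[w](t,\cdot,\cdot)}_{L^p(Q)}\le C(M_1,M_2)\norm{w(t,\cdot,\cdot)}_{L^p(Q)}$, which follows from $\int_\s|k(x,\theta,\theta')|\,d\theta'\le M_1$, $\int_\s|k(x,\theta,\theta')|\,d\theta\le M_2$ and Fubini's theorem exactly as in the proof of Lemma \ref{L2.1}. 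Having $u\in\mathcal{C}([0,T];W_p)$ is what makes the traces $u_{|\Gamma^\pm}\in L^p(\Gamma^\pm,d\xi)$ meaningful, and the argument below works on such $u$.

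\textbf{The energy identity.} First I would multiply the equation in \eqref{r1} by $p|u|^{p-2}u$ (understood as $p\,\mathrm{sgn}(u)$ when $p=1$) and integrate over $Q=\s\times\Omega$. Since $p|u|^{p-2}u\,\p_t u=\p_t|u|^p$ and, $\theta$ being constant, $p|u|^{p-2}u\,\theta\cdot\nabla u=\theta\cdot\nabla|u|^p=\mathrm{div}_x\big(\theta|u|^p\big)$, the divergence theorem on $\Omega$ followed by integration in $\theta\in\s$ turns the transport term into the boundary integral $\int_\Gamma(\theta\cdot\nu(x))|u(t,x,\theta)|^p\,dx\,d\theta$. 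Splitting $\Gamma=\Gamma^+\cup\Gamma^-$, recalling $d\xi=|\theta\cdot\nu(x)|\,dx\,d\theta$ and using the boundary condition $u_{|\Sigma^-}=f$, this yields
\begin{multline*}
\frac{d}{dt}\norm{u(t,\cdot,\cdot)}_{L^p(Q)}^p+\norm{u(t,\cdot,\cdot)}_{L^p(\Gamma^+,d\xi)}^p=\norm{f(t,\cdot,\cdot)}_{L^p(\Gamma^-,d\xi)}^p\\-p\int_Q a(t,x)|u|^p\,dx\,d\theta+p\int_Q|u|^{p-2}u\,(\G_k[u]+v)(t,x,\theta)\,dx\,d\theta.
\end{multline*}

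\textbf{Estimating the right-hand side and Gr\"onwall.} I would then bound the last two terms: $\abs{p\int_Q a(t,x)|u|^p\,dx\,d\theta}\le pM_0\norm{u(t,\cdot,\cdot)}_{L^p(Q)}^p$; by H\"older's inequality in $(x,\theta)$ together with the Schur-type bound above, $\abs{p\int_Q|u|^{p-2}u\,\G_k[u]\,dx\,d\theta}\le p\norm{u(t,\cdot,\cdot)}_{L^p(Q)}^{p-1}\norm{\G_k[u](t,\cdot,\cdot)}_{L^p(Q)}\le C\norm{u(t,\cdot,\cdot)}_{L^p(Q)}^p$; and by H\"older and Young's inequality, $\abs{p\int_Q|u|^{p-2}u\,v\,dx\,d\theta}\le p\norm{u(t,\cdot,\cdot)}_{L^p(Q)}^{p-1}\norm{v(t,\cdot,\cdot)}_{L^p(Q)}\le(p-1)\norm{u(t,\cdot,\cdot)}_{L^p(Q)}^p+\norm{v(t,\cdot,\cdot)}_{L^p(Q)}^p$. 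Writing $y(t)=\norm{u(t,\cdot,\cdot)}_{L^p(Q)}^p$ and discarding the nonnegative outgoing boundary term, this gives the differential inequality $y'(t)\le C_1\,y(t)+\norm{f(t,\cdot,\cdot)}_{L^p(\Gamma^-,d\xi)}^p+\norm{v(t,\cdot,\cdot)}_{L^p(Q)}^p$ with $y(0)=\norm{u_0}_{L^p(Q)}^p$, so Gr\"onwall's lemma yields $\norm{u(t,\cdot,\cdot)}_{L^p(Q)}^p\le C\big(\norm{u_0}_{L^p(Q)}^p+\norm{f}_{\mathscr{L}^{-}_p(\Sigma^{-})}^p+\norm{v}_{L^p(Q_T)}^p\big)$ for all $t\in(0,T)$, with $C$ depending only on $T$, $M_0$, $M_1$, $M_2$, $p$. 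Keeping instead the outgoing boundary term and integrating the same differential inequality over $(0,T)$, and using the bound on $\int_0^T y(t)\,dt$ just obtained, controls $\norm{u}_{\mathscr{L}^{+}_p(\Sigma^{+})}^p=\int_0^T\norm{u(t,\cdot,\cdot)}_{L^p(\Gamma^+,d\xi)}^p\,dt$ by the same right-hand side; adding the two bounds and taking $p$-th roots gives \eqref{1.599}.

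\textbf{Main obstacle.} The delicate step is the rigorous justification of the two formal manipulations above for a solution of the limited regularity $\mathcal{C}^1([0,T];L^p(Q))\cap\mathcal{C}([0,T];W_p)$: the chain rule producing $\p_t|u|^p$ and $\theta\cdot\nabla|u|^p$, and the divergence/Green formula for $\theta\cdot\nabla$ on $W_p$ with trace term in $L^p(\Gamma,d\xi)$. I would handle this by a regularization argument --- mollify $u$ in the $x$ variable and use that the mollifier commutes with $\theta\cdot\nabla$ up to a commutator tending to $0$ in $L^p$ (DiPerna--Lions), so that the energy identity holds for the regularizations and passes to the limit --- or, equivalently, prove the estimate directly on the Duhamel representation along characteristics and remove the extra smoothness by density. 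The case $p=1$, which is the one actually used in the main theorems, additionally requires Kato's inequality $\p_t|u|+\theta\cdot\nabla|u|+a|u|\le|\G_k[u]|+|v|$, obtained from the same regularization.
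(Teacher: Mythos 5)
Your proof is correct and follows essentially the same route as the paper's: multiplication by $|u|^{p-2}u$, the divergence theorem splitting the boundary term over $\Gamma^\pm$, the Schur-type bound on $\G_k$ via H\"older and Fubini, Young's inequality for the source term, Gr\"onwall, and a final integration in time to control the outgoing trace. The only difference is that you explicitly flag the regularization needed to justify the chain rule and Green's formula at the regularity $\mathcal{C}([0,T];W_p)$ (and the $p=1$ case), a point the paper's computation treats as formal.
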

\begin{proof}
Multiplying  the first equation of \eqref{r1} by the complex conjugate of  $\vert u\vert^{p-2}\overline{u}$,
integrating it over $Q$ and taking its real part, we get:  
\begin{multline*}
 \mbox{Re}  \int_Q (\partial_t u(t) \vert u(t)\vert^{p-2}\overline{u}(t)+ \theta \cdot \nabla  u(t) \vert u (t)\vert^{p-2}\overline{u}(t) + a(t,x) u(t) \vert u(t)\vert^{p-2} \overline{u} (t)) d\theta dx \\
 =\mbox{Re} \int_Q \G_{k}[u](t)\vert u(t) \vert^{p-2}\overline{u}(t)d\theta dx+\mbox{Re} \int_Q v(t)\vert u(t) \vert^{p-2}\overline{u}(t)d\theta dx.
\end{multline*}
Then, we have
\begin{multline*}
 \frac{1}{p} \frac{d}{dt}\int_Q \vert u(t)\vert^p d\theta dx +\frac{1}{p}\int_Q \textrm{div}(\theta \vert u(t)\vert^p)d\theta dx  + \int_Q a(t,x) \vert u(t)\vert^p  d\theta dx \\
 =\mbox{Re} \int_Q \G_{k}[u](t)\vert u (t)\vert^{p-2}\overline{u}(t) d\theta dx+\mbox{Re} \int_Q v(t)\vert u(t) \vert^{p-2}\overline{u}(t)d\theta dx. 
\end{multline*}
Then by Gauss identity, we get
 \begin{multline}
\frac{1}{p} \frac{d}{dt}\int_Q \vert u(t)\vert^p d\theta dx +\frac{1}{p} \int_{\Gamma^+}\theta\cdot \nu(x) \vert u(t)\vert^p d\theta dx +\frac{1}{p} \int_{\Gamma^-}\theta\cdot \nu(x) \vert u(t)\vert^p d\theta dx  \\+ \int_Q a(t,x) \vert u(t)\vert^p  d\theta dx =\mbox{Re} \int_Q (\G_{k}[u](t)\vert u(t) \vert^{p-2}\overline{u}(t) d\theta dx+\mbox{Re} \int_Q v(t)\vert u(t) \vert^{p-2}\overline{u}(t)d\theta dx.\label{r11} 
 \end{multline} 
By  (\ref{2.7}) and Hölder's inequality, we get
\begin{equation}
\begin{split}
\int_Q \vert \G_{k}[u(t)]\vert \vert u(t)\vert^{p-1}dx d\theta &\leq \left(\int_Q  \vert  \G_{k}[u(t)]\vert^p dx d\theta \right)^{\frac{1}{p}} \left( \int_Q  \vert u(t)\vert^{(p-1)q} dx d\theta \right)^{\frac{1}{q}} \\
&\leq C_p \Vert u(t)\Vert_{L^p(Q)} \left( \int_Q \vert u(t)\vert^p dx d\theta \right)^{1-\frac{1}{p}}\\ 
&\leq C_p \Vert u(t)\Vert_{L^p(Q)} \Vert u(t)\Vert^{p(1-\frac{1}{p})}_{L^p(Q)}\\
&\leq C_p \Vert u(t)\Vert^p_{L^p(Q)},\label{r2}
\end{split}
\end{equation}
where $C_p=M_1^{1/q}M_2^{1/p}$ with $\frac{1}{p}+\frac{1}{q}=1.$
 On the other hand Young inequality implies
\begin{equation}\label{..}
 \int_Q \vert v(t)\vert \vert u(t) \vert^{p-1} d\theta dx\leq \frac{1}{p}\norm{v(t)}^p_{L^p(Q)}+\frac{1}{q}\norm{u(t)}^p_{L^p(Q)}.
\end{equation}
Then, we obtain
\begin{equation}\label{a1}
\begin{split}
\frac{1}{p} \frac{d}{dt}\int_Q \vert u(t)\vert^p d\theta dx+\frac{1}{p} \int_{\Gamma^+}\vert\theta\cdot \nu(x)\vert &\vert u(t)\vert^p d\theta dx  \leq \frac{1}{p} \int_{\Gamma^-}\vert\theta \cdot \nu(x)\vert \vert f(t)\vert^pd\theta dx+C_p \Vert u(t)\Vert^p_{L^p(Q)}\\&+\Vert a\Vert_{L^\infty(Q)}\Vert u(t)\Vert^p_{L^p(Q)}+ \frac{1}{p}\norm{v(t)}^p_{L^p(Q)}+\frac{1}{q}\norm{u(t)}^p_{L^p(Q)},
\end{split}
\end{equation}
we conclude
$$\frac{d}{dt} \Vert u(t) \Vert^p_{L^p(Q)}\leq \int_{\Gamma^-}\vert\theta \cdot \nu(x)\vert \vert f(t)\vert^pd\theta dx+  C_1\Vert u(t) \Vert^p_{L^p(Q)}+\norm{v(t)}^p_{L^p(Q)}.$$
Now, integrating this last inequality over $(0,t)$, we get
\begin{equation}
 \Vert u(t)\Vert^p_{L^p(Q)}\leq  \Vert f \Vert^p_{\mathscr{L}^{-}_p(\Sigma^{-})} +\norm{v}^p_{L^p(Q_T)}+ \Vert u_0\Vert^p_{L^p(Q)}+  C_1\int_0^t\Vert u(s) \Vert^p_{L^p(Q)}ds.\label{r30}
 \end{equation}
By using Grönwall's inequality, we get
 \begin{equation}
  \Vert u(t)\Vert^p_{L^p(Q)}\leq C_T \left( \Vert f \Vert^p_{\mathscr{L}^{-}_p(\Sigma^{-})} +\norm{v}^p_{L^p(Q_T)}+ \Vert u_0\Vert^p_{L^p(Q)}\right) .\label{r3}
 \end{equation}
By integrating \eqref{a1} over $(0,T)$, we get
 \begin{equation}
\frac{1}{p} \Vert u(T) \Vert^p_{L^p(Q)}-\frac{1}{p}\Vert u_0 \Vert^p_{L^p(Q)}+\frac{1}{p} \int_{0}^T\Vert u(s) \Vert^p_{L^p(\Gamma^{+};d\xi)}ds \leq \frac{1}{p} \int_0^T\Vert f(s) \Vert^p_{L^p(\Gamma^{-};d\xi)}ds+C_1 \int_0^T\Vert u(s)\Vert^p_{L^p(Q)}ds.
\end{equation}
Therefore by using \eqref{r3}, we get
 \begin{equation}\label{a2}
  \Vert u \Vert^p_{\mathscr{L}^{+}_p(\Sigma^{+})}  \leq C\left( \Vert f \Vert^p_{\mathscr{L}^{-}_p(\Sigma^{-})} +\norm{v}^p_{L^p(Q_T)}+ \Vert u_0\Vert^p_{L^p(Q)}\right). 
\end{equation}
In light of \eqref{r3} and \eqref{a2}, the proof of Lemma \eqref{A1} is completed.
\end{proof}



\end{document}